\def\wt#1{{{\widetilde #1} }}
\def\wh#1{{{\widehat #1} }}
\def\ov{\overline}
 \def\up{\upharpoonright}
\def\cH{\mathcal H}
\def\cD{\mathcal D}
\def\cC {\mathcal C}
\def \cF {\mathcal F}
\def \cK {\mathcal K}
\def\cL {\mathcal L}
\def\cB{\mathcal B}
\def \gH{\mathfrak H}
\def \gN{\mathfrak N}
\def \dC{{\mathbb C}}
\def \bC{\mathbb C}
\def \bR{\mathbb R}
\def \C{\widetilde {\mathcal C}}
\def \cd {\cdot}
\def \l{\lambda}  \def \s{\sigma}   \def\g {\gamma}
\def\d {\delta}   
  \def \G{\Gamma} \def \S{\Sigma}
\def \exa { {\rm Ext}_A} \def \AD {A\in [\cD,\gH]}
\def \im{{\rm Im}\,} \def \re{{\rm Re}\,}
\def\dom {{\rm dom}\,}
\def\ran {{\rm ran}\,}
\def\ker {{\rm ker }\,}
\def\mul {{\rm mul}\,}
\def\cdom{{\rm {\overline{dom}}\,}}
\def\cran{{\rm {\overline{ran}}\,}}
\def\bt{\{\cH,\G_0 ,\G_1 \}}
\def\bta{\{\cH\oplus\cH,\G,\G^\top \}}
\def \DP {\{ A,A^\top \}} \def \DA {\{ A,A \}}
\newtheorem{theorem}{Theorem}[section]
\newtheorem{proposition}[theorem]{Proposition}
\newtheorem{corollary}[theorem]{Corollary}
\newtheorem{lemma}[theorem]{Lemma}
\newtheorem*{example}{Example}
\theoremstyle{definition}
\newtheorem {definition} [theorem]{Definition}
\theoremstyle{remark}
\newtheorem{remark}[theorem]{Remark}
\numberwithin{equation}{section}
\begin{document}
\title[Unitary equivalence of some classes of proper extensions]
{Unitary equivalence of  proper extensions of a symmetric operator and the Weyl
function}
\author {Seppo Hassi}
\address{Department of Mathematics and Statistics,
University of Vaasa \\
P.O. Box 700\\
 65101 Vaasa\\
 Finland}
\email{sha@uwasa.fi}

\author {Mark Malamud}

\address{Institute of Applied Mathematics and Mechanics, NAS of
Ukraine\\
 R. Luxemburg Str. 74\\
 83050 Donetsk\\
  Ukraine}
\email{mmm@telenet.dn.ua}

\author {Vadim Mogilevskii}
\address{Department of Mathematical Analysis\\
Lugans'k National University\\
 Oboronna Str. 2\\
 91011  Lugans'k\\
   Ukraine}
\email{vim@mail.dsip.net}

\subjclass{Primary 47A56, 47B25; Secondary 47A48, 47E05}

\keywords{Symmetric operator, dual pair of operators, boundary triplet, Weyl
function, unitary equivalence}

\begin{abstract}
Let $A$ be a densely defined simple symmetric operator in $\gH$, let $\Pi=\bt$
be a boundary triplet for $A^*$ and let $M(\cd)$ be the corresponding Weyl
function. It is known that the Weyl function $M(\cd)$ determines the boundary
triplet $\Pi$, in particular, the pair $\{A,A_0\}$, where $A_0:=
A^*\lceil\ker\G_0 (= A^*_0)$, uniquely up to unitary similarity. At the same
time the Weyl function corresponding to a boundary triplet for a dual pair of
operators defines it uniquely only up to weak similarity.

In this paper we consider symmetric dual pairs $\{A,A\}$ generated by $A\subset
A^*$ and special boundary triplets $\wt\Pi$ for $\{A,A\}$. We are interested
whether the result on unitary similarity remains valid provided that the Weyl
function corresponding to $\wt\Pi$ is $\wt M(z)= K^*(B-M(z))^{-1} K,$ where $B$
is some non-self-adjoint bounded operator in $\cH$. We specify some conditions
in terms of the operators $A_0$ and $A_B= A^*\lceil \ker(\G_1-B\G_0)$, which
determine uniquely (up to unitary equivalence) the pair $\{A,A_B\}$ by the Weyl
function $\wt M(\cd)$. Moreover, it is shown that under some additional
assumptions the Weyl function $M_\Pi(\cdot)$ of the boundary triplet $\Pi$ for
the dual pair $\DA$ determines the triplet $\Pi$ uniquely up to unitary
similarity. We obtain also some negative results demonstrating that in general
the Weyl function $\wt M(\cd)$ does not determine the operator $A_B$ even up to
similarity.
   \end{abstract}

\maketitle

\section{Introduction}
Let $\gH$ be a Hilbert space, let $A$  be a densely defined closed symmetric
operator in $\gH$ with equal deficiency indices $n_+(A)=n_-(A)\leq\infty$, and
let $A^*$ be the adjoint operator of $A$.

During the last three decades a new approach to the extension theory has been
elaborated and has already attracted some attention. It is based on a concept
of a boundary triplet $\Pi=\bt$ for the operator $A^*$ (see \cite{GG91}). The
main ingredient of this approach is the following abstract Green's (Lagrange)
identity
   \begin{equation}\label{Intro1}
(A^*f,g) - (f,A^*g) = (\G_1  f,\G_0  g)_{\cH} - (\G_0f,\G_1 g)_{\cH}, \quad f,
g\in \dom A^*,
   \end{equation}
where $\cH$ is an auxiliary Hilbert space and $\G_0,\;\G_1$ are linear mappings
from $A^*$ to $\cH$ such that the mapping $\G=(\G_0\;\;\G_1)^\top$ is
surjective. A boundary triplet for $A^*$ always exists but is not unique. Its
role in the extension theory is similar to that of a coordinate system in
analytic geometry. It allows one to parameterize the set $\exa$ of closed
extensions $\widetilde A$ of $A$ satisfying $A\subset \wt A \subset A^*$
(proper extensions) in terms of abstract boundary conditions. Namely, the
equality $\Theta =\G \dom \wt A$ establishes a one-to-one correspondence
between all extensions $A_\Theta := \wt A \in \exa$ and all closed linear
relations $\Theta$ in $\cH$.  If $\Theta$ is the graph of a closed operator
$B(\in \mathcal C(\cH))$, then the corresponding extension is given by
%
%
   \begin{equation}\label{Intro2}
\wt A =  A_B= A^*\lceil \ker(\G_1-B\G_0).
   \end{equation}
%
%
%
%

The main analytical tool in this approach is the Weyl function $M(\cdot)$
corresponding to $\Pi$ that was introduced and investigated extensively in
\cite{DM91}. It is defined by
%
%
  \begin {equation}\label{Intro3}
\G_1{f_z} = M(z)\G_0{f_z}, \quad f_z\in \gN_z:=\ker (A^*-z), \quad
z\in\bC\setminus\bR.
  \end{equation}
It is shown in \cite{DM91} that $M(\cd)$ is well defined, takes values in
$[\cH]$ and is an $R[\cH]$-function (Nevanlinna function), i.e., it  is
holomorphic in $\bC\setminus\bR$,  $\im z \im M(z)\geq 0$ and $M^*(z)=\ov {M
(z)}, \; z\in\bC\setminus\bR$.

If the operator $A$ is simple, then \emph{the Weyl function $M(\cdot)$
determines the pair  $\{A,A_0\},$ $A_0 = A^*\lceil \ker\Gamma_0$, as well as
the boundary triplet $\Pi$ itself, uniquely up to unitary equivalence}; see
\cite{DM91,Mal92}. In particular, $M(\cdot)$ determines the extension $A_B$
given by \eqref{Intro2} uniquely up to the unitary similarity. Note that the
unique  determination  of the pair $\{A,A_0\}$ up to the unitary equivalence
has been proved
in \cite{KrL73,LT77} in terms of the  so-called $Q$-functions.

The concept of a  boundary triplet for $A^*$ has been extended to the case of
dual pairs of closed operators $A,A^T$, i.e. pairs of operators satisfying
$A\subset (A^\top)^*$; the definition reads as follows:
   \begin{definition}\label{def1.1}$\,$\cite{L-S}
Let $\{A,A^T\}$ be a dual pair of closed densely defined operators $A$ and
$A^T$ in $\gH$.  A collection $\Pi= \{\cH_0\oplus\cH_1, \G, \G^\top \}$, where
$\cH_0$ and $\cH_1$ are Hilbert spaces and
\begin {equation*}
\G=\begin{pmatrix} \G_0\cr \G_1 \end{pmatrix}: \dom (A^\top)^*\to
\cH_0\oplus\cH_1, \quad \G^\top=\begin{pmatrix} \G_0^\top\cr \G_1^\top
\end{pmatrix}: \dom A^* \to \cH_1\oplus\cH_0
\end{equation*}
are linear mappings, is called a boundary triplet for $\DP$ if the mappings
$\G$ and $\G^\top$ are surjective and for every $f\in \dom(A^\top)^*$ and $g\in
\dom A^*$ the following abstract Green's identity holds
    \begin{equation*}
((A^\top)^*f, g)-(f, A^*g) = (\G_1 f, \G_0^\top g)-(\G_0 f, \G_1 ^\top g).
   \end{equation*}
\end{definition}


First constructions of boundary triplets for a dual pair of non-self-adjoint
elliptic operators as well as their applications to non-local boundary value
problems for elliptic operators in domains with smooth boundary go back to the
classical papers by M.I. Visik \cite{Vi52} and G. Grubb \cite{Gr68}.

In \cite{MalMog97, MalMog02} a concept of the Weyl function $M_\Pi(\cd)$
corresponding to the triplet $\Pi$ have also been extended to the case of dual
pairs $\{A,A^T\}$. The considerations in \cite{MalMog97, MalMog02} have been
inspired by investigations of V.B. Lidskii \cite{Lid60} in the spectral theory
of the Sturm-Liouville operator $-\frac{d^2}{dx^2}+q$ with complex-valued
potential; there the Weyl function was introduced by extending the Weyl
limit-circle procedure. It turned out that the abstract Weyl function
$M_\Pi(\cd)$ from \cite{MalMog97, MalMog02} coincides with that from
\cite{Lid60}. Moreover, it was shown in \cite{MalMog02} that for each boundary
triplet $\Pi$ for $\DP$ the extension $A_0 := (A^\top)^*\lceil \ker \G_0$ and
the Weyl function $M_\Pi(\cd)$ coincide, respectively, with the main operator
and the transfer function of some linear stationary system (in the sense of
\cite{Aro79,AroNud96}). Further investigations in this topic have been
motivated by possible applications to boundary value problems for
non-self-adjoint differentials operators (see \cite{MalMog02, BroGruWoo09,
BrHinMar09,BrMarNab08,Gru10, Mal10} and references therein).

It is shown in \cite{MalMog99} that in the case of a simple dual pair $\DP$ the
Weyl function $M_\Pi(\cd)$ determines the boundary triplet $\Pi$ as well as the
proper extension $A_B$ \emph{uniquely up to weak similarity} (similar result
for linear stationary systems was earlier obtained in \cite{Aro79}). Note that
weak similarity does not preserve the spectral properties and even the spectrum
of the main operator. In recent publications \cite{AroNud02} and
\cite{ArlHasSno05,ArlHasSno07} it was shown that in two special cases the Weyl
function determines the operator $A_B$ up to similarity \cite{AroNud02} and up
to unitary similarity \cite{ArlHasSno05,ArlHasSno07}, respectively.

In this paper we consider only special dual pairs $\{A,A\}$ generated by a
symmetric, not necessarily densely defined, operator $A$. Moreover, we consider
only special boundary triplets $\Pi= \{\cH\oplus\cH, \G, \G^\top \}$ for the
dual pair $\{A,A\}$ such that the corresponding Weyl function $M_\Pi(\cd)$ is
     \begin{equation}\label{Intro5}
\wt M(z) := M_\Pi(z) = K^*(B-M(z))^{-1} K, \quad z\in \rho (A_B),
     \end{equation}
where $ M(\cdot)$ is the Weyl function of $A$ defined by \eqref{Intro3} and $B$
is a bounded non-self-adjoint ($B\not =B^*$) operator in $\cH$. The latter
means, in particular, that $A_B$ is the almost solvable extension of $A$ in the
sense of \cite{DM92}.

The  Weyl function $M_\Pi(\cd)$ of the form \eqref{Intro5} is no longer
Nevanlinna function in $\Bbb C_{\pm}$. However, if $B$ is accumulative, then
the Weyl function $M_\Pi(\cd)$ of the form \eqref{Intro5} is  a Nevanlinna
function in $\Bbb C_+$, i.e. it is holomorphic in $\Bbb C_+$
 and $\im M_\Pi(z) \geq 0, \ z\in \Bbb C_+$. It follows that  $M_\Pi(\cd)$ admits a classical  integral representation (cf.  \eqref {2.2}) only in $\Bbb C_+$.

We are interested in sufficient (and necessary) conditions in terms of the
operators $A_B$ and $A_0=A^*_0$ that make it possible  to determine the pair
$\{A,A_B\}$ uniquely (up to the unitary equivalence) by  the Weyl function
\eqref{Intro5}. More precisely, we consider the following problem:

\emph{Given a simple symmetric operator $A^{(j)}$, a boundary triplet
$\Pi_j=\;\;\;$ $ \{\cH,\Gamma^{(j)}_0,\Gamma^{(j)}_1\}$ for $A^{(j)*}$, the
corresponding Weyl function $M_j(\cdot)$, a proper extension $A^{(j)}_{B_j} =
{A^{(j)*}}\lceil \ker(\G_1^{(j)} - B_j\G_0^{(j)}),\ j\in\{1,2\},$ and a domain
$\Omega \subset \rho(A^{(1)}_{B_1})\cap \rho(A^{(2)}_{B_2})\subset \Bbb C$.
When the equality}
     \begin {equation}\label{Intro6}
\wt M_1(z) = K^*_1(B_1 - M_1(z))^{-1}K_1  = K^*_2(B_2 - M_2(z))^{-1}K_2 = \wt
M_2(z), \quad z\in\Omega,
     \end{equation}
\emph{yields  the unitary  similarity  of the pairs of operators}
$\{A^{(1)}_{B_1}, A^{(1)}_0\}$ \emph{and}

\noindent $\{A^{(2)}_{B_2}, A^{(2)}_0\}?$

We show (cf. Theorem \ref{th5.4}) that the answer is positive at least in the
following two cases:

(i) $\Omega\cap\Bbb C_{\pm} \not = \emptyset$;

(ii) $\Omega \subset \bC_+$  
and the $ac$-part $E_j^{ac}(\cdot)$ of the spectral measure $E_j(\cdot)$ of
$A^{(j)}_0,\  j\in\{1,2\},$ is not equivalent to the Lebesgue measure.

In particular, both assumptions (i) and (ii) are satisfied provided that
$\Omega \cap \Bbb R  \not =  \emptyset$. It is emphasized however, that
condition (ii) is not necessary for the unitary equivalence of $A_{B_1}$ and
$A_{B_2}$. Moreover, \emph{the unitary equivalence might happen even if both
$E_1^{ac}(\cdot)$ and $E_2^{ac}(\cdot)$ are spectrally equivalent to the
Lebesgue measure} (see Remark \ref{rem5.2a}).

To include in our considerations  bounded operators $A_0 = A_0^*$ we consider
dual pairs $\{A,A\}$ with a bounded nondensely defined symmetric operator $A$.
We show that  the  Weyl function $M_\Pi(\cd)$ corresponding to a special
boundary triplet $\Pi=\bta $ for $\DA$ is
  \begin {equation}\label{1.8}
M_\Pi(z) = \cF + K^*(A_0-z)^{-1} K, \quad z\in\rho (A_0),
  \end{equation}
(c.f. \eqref{Intro5}), where $A_0 =\ker \G_0\in [\gH]$ and $K\in [\cH,\gH]$,
$\cF\in [\cH]$ are the operators defined in terms of the boundary
triplet $\Pi$. Since $A_0$ is bounded,    
$M_\Pi(\infty)=\cF$ and it follows from each of the assumptions (i) and (ii)
that $M_\Pi(\cdot)$ determines the pair $\{A_0,A_B\}$ uniquely up to the
unitary equivalence. In fact, in this case a stronger statement is valid:
\emph{the Weyl function $M_\Pi(\cdot)$ determines the boundary triplet $\Pi$
uniquely up to unitary equivalence} (see Theorem \ref{th5.10}). A similar
result is also valid for unbounded $A$ provided that $\rho(A_0)\cap\Bbb R\not
=\emptyset$  (cf. Theorem \ref{th5.11}). This result can be reformulated as
follows: \emph{the transfer function of a special linear stationary system
determines it uniquely up to unitary similarity} (see Remark \ref{rem5.13} for
details).

In the last section we present some negative results demonstrating that in
general the function $\wt M(\cd)$ of the form \eqref{Intro5} does not determine
the operator $A_B$ uniquely even up to the similarity. For instance, it is
shown that for any symmetric operator $A^{(1)}$, any Weyl function $M_1(\cdot)$
of $A^{(1)}$, and any accumulative $B_1\in[\cH]$ there exists a (non-unique)
simple symmetric operator $A^{(2)}$ and a (non-unique) dissipative operator
$B_2\in[\cH]$ such that equality \eqref{Intro6} holds with $K_1=K_2=I_{\cH}$
and $\Omega \subset \bC_+.$ At the same time the operators $A_{B_1}$ and
$A_{B_2}$ have different spectra, hence  cannot be similar.  Observe also that
without additional restrictions  \emph{the Weyl function $\wt M(\cd)$ does not
determine the  extension  $A_B$ uniquely  up to the unitary similarity even in
the case of the accumulative  $B$}  (see Remark \ref{rem5}(\ref{point4})).

 Finally, we present some explicit examples illustrating the above effect.

The main results of the paper have been announced without proofs in
\cite{HasMalMog12}.

\textbf{Notation.}  Throughout the paper $\gH$ and $\cH$ are assumed to be
separable Hilbert spaces, the set of bounded linear operators from $\cH_0$ to
$\cH_1$ is denoted by $[\cH_0,\cH_1]$, we also write $[\cH]:=[\cH,\cH]$.
Further, $P_\cL\in [\gH]$ denotes the orthoprojector in $\gH$ onto the subspace
$\cL\subset \gH$. The open upper and lower half-plane of $\Bbb C$ are denoted
by $\Bbb C_+$ and $\Bbb C_-$, respectively.

\section{Preliminaries}
\subsection{Linear relations}
Let $\cH_0$ and  $\cH_1$ be Hilbert spaces. A linear relation $T$ from $\cH_0$
to $\cH_1 $ is a linear manifold in $\cH_0 \oplus \cH_1$. In particular, the
set of all closed linear relations from $\cH_0$ to $ \cH_1$ is denoted by $\C
(\cH_0,\cH_1)$, and $\C(\cH) := \C (\cH,\cH)$. For each $T\in \C (\cH_0,\cH_1)$
the domain, the range, the kernel and the multi-valued part of $T$ are denoted
by $\dom T,\; \ran T,\; \ker T$, and $\mul T$, respectively. The notations
$\cdom T$ and $\cran T$ stand for the closures of the domain $\dom T$ and the
range $\ran T$ of $T$. Systematically a closed linear operator $T$ from $\cH_0$
to $\cH_1$ will be identified with its graph
\[
 {\rm gr}\, T=\{\{f, Tf\}: f\in \dom T\}\in \C(\cH_0,\cH_1).
\]
If $T\in \C(\cH_0,\cH_1)$ then the inverse linear relation $T^{-1}$ is given by
$$
T^{-1}=\{\{f^{\prime},f\}:\{f,f^{\prime}\}\in T\}\  (\in\C(\cH_1,\cH_0)).
$$
Furthermore, the adjoint linear relation $T^*$ is defined as
$$
T^*=\{\{g,g^{\prime}\}\in \cH_1\oplus \cH_0:(f^{\prime},g)=(f,g^ {\prime}),\ \
\{f,f^{\prime}\}\in T\}\in \C(\cH_1,\cH_0).
$$
For a linear relation $T\in \C (\cH_0,\cH_1)$ the following notations are
frequently used:

$0\in \rho (T)$~\ if~\ $\ker T=\{0\}$\  and\  $\ran T=\cH_1$, or equivalently,
if $T^{-1}\in [\cH_1,\cH_0]$;

$0\in \wh\rho (T)$~\ if~\  $\ker T=\{0\}$\ and\ $\cran T=\ran T\neq\cH_1;$

$0\in \sigma_c (T)$~\  if~\  $\ker T=\{0\}$ and $\cran T=\cH_1\neq\ran T;$

$0\in \sigma_p(T)$~\  if~\  $\ker T\neq\{0\};$

$0\in \sigma_r(T)$~\ if~\ $0\in \sigma (T)\setminus (\sigma_p(T)\cup\sigma
_c(T)).$

For any $T\in \C(\cH)$ denote by $\rho (T)=\{\l \in \bC:\ 0\in \rho (T-\l)\}$\
and $\widehat\rho (T)=\{\l \in \bC:\ 0\in \wh\rho (T-\l)\}$ the resolvent set
and the set of regular type points of $T$, respectively. The spectrum of $T$ is
given by $\sigma (T)=\bC \backslash \rho (T)$. It admits the following
classification:

$\sigma_c(T)=\{\l \in \bC:0\in \sigma_c (T-\l)\}$ is the continuous spectrum;

$\sigma_p (T)=\{\l \in \bC:0\in \sigma_p (T-\l)\}$ is the point spectrum;

$\sigma_r (T)=\{\l \in \bC:0\in \sigma_r (T-\l)\}$ is the residual spectrum.

A linear relation $T\in\C (\cH)$ is called symmetric  if $T\subset T^*$ and
self-adjoint  if $T=T^*.$

\subsection{Operator measures}
Here some known facts on operator measures are recalled.
Let ${\cH}$ be a separable Hilbert space, let $\cB(\bR)$ be the Borel
$\s$-algebra of the real line $\bR$ and let $\cB_b(\bR)$ be the ring of all
bounded sets in $\cB(\bR)$.
    \begin{definition}\label{def2.1}
(i)\  A mapping $\S(\cd):\cB_b(\bR)\to [\cH]$ is called an operator measure if
$\S(\emptyset)=0$, $\S(\d)=\S(\d)^*\geq 0 \; (\d\in\cB_b(\bR))$, and the
function $\S(\cd)$ is strongly countably additive.

(ii)\  An operator measure $\S(\cd)$ is called bounded if it is defined on
$\cB(\bR)$.

(iii)\  A bounded operator measure $\S(\cd)=:E(\cd)$ is said to be orthogonal
if $E(\bR)=I$ and  $E^2(\d)=E(\d)$ (i.e., $E(\d)$ is
the orthoprojector in $\cH$) . 
   \end{definition}
By the spectral theorem \cite{BirSol}, there exists a one-to-one correspondence
between the orthogonal measures $E(\cd)$ in $\cH$ and self-adjoint operators
$T=T^*$ in $\cH$. It is given by the decomposition
 \begin {equation}\label{2.0.1}
T=\int_\bR t\, dE(t),\quad   \dom T  = \{f\in \cH: \int_\bR
t^2d(E(t)f,f)<\infty \}.
  \end{equation}
The measure $E(\cd)$ is called the spectral measure of $T$.


The operator  measure $\S_1$ is called absolutely continuous with respect to
the measure $\S_2$ (in symbols: $\S_1\prec \S_2$) if $\S_2(\d)=0$ implies
$\S_1(\d)=0$ for $\d\in\cB_b(\bR)$. The measures $\S_1$ and $\S_2$ are called
equivalent ($\S_1\sim\S_2$) if $\S_1\prec \S_2$ and $\S_2\prec \S_1$.

In the sequel we denote by $m(\cd) $ the (scalar) Lebesgue measure in $\bR$.
The operator measure $\S(\cd)$ is called singular (with respect to $m(\cd)$) if
there exists a set $\d_0\in\cB(\bR)$ such that $m(\d_0)=0$ and
$\S(\d)=\S(\d\cap\d_0)$ for all $\d\in\cB_b(\bR)$. The singularity of $\S(\cd)$
is denoted by $\S\perp m$.

Each operator measure $\S(\cd)$ admits the Lebesgue  decomposition
  \begin {equation}\label{2.1}
\S=\S^{ac}+\S^s, \quad  \text{where} \quad \S^{ac}\prec m \quad\text{and} \quad
\S^s \perp m.
  \end{equation}
%
%
The operator measures $\S^{ac}$ and $\S^s$ are called the absolutely continuous
and singular parts of $\S,$ respectively. If the  measure $\Sigma(\cd)= E(\cd)$
is orthogonal, then its absolutely continuous and singular parts $E^{ac}(\cd)$
and $E^{s}(\cd)$ are also orthogonal, $E^{ac}(\cd)E^{s}(\cd)=0$, and the
decomposition in \eqref{2.1} can be rewritten as $E=E^{ac}\oplus E^s.$

For a self-adjoint operator $T$ in $\cH$ with the spectral measure $E(\cd)$
denote $\cH_{\tau}:=E^{\tau}\cH$ and $T_{\tau}=T\lceil\cH_{\tau}$, $\tau=ac,s$.
This yields the decompositions  $\cH=\cH_{ac}\oplus \cH_s$ and $T=T_{ac}\oplus
T_s$. The operators $T_{ac}$ and $T_s$ are called the $ac$-part  and the
singular part of $T$, respectively. The spectrum $\sigma(T_{ac})$
($\sigma(T_{s})$) is called the $ac$-spectrum (resp. the singular spectrum) of
$T.$

%
%
%
%
%

Recall that each self-adjoint relation $T\in\C (\cH)$ admits the decomposition
$$
T=T'\oplus \wh \mul T, \quad \wh\mul T=\{0\}\oplus \mul T,
$$
where $T'$ is a self-adjoint operator (the operator part of $T$) in $\cH'
:=\cH\ominus \mul T$.
    \begin{definition}\label{def2.1.2}
Let $T = T^*\in\C (\cH)$ and   $\mul T\neq \cH$, i.e., $\cH'\neq \{0\})$. Then
the spectral measure of $T$ is defined to be the spectral measure
$E(\cd):\cB(\bR)\to [\cH']$ of its operator part $T' = (T')^*$. Moreover, the
$ac$-spectrum $\s_{ac}(T)$ and singular spectrum $\s_{s}(T)$
 of $T$ are defined as 
 $\s_{ac}(T):=\s_{ac}(T')$ and $\s_{s}(T):=\s_{s}(T')$, respectively.
   \end{definition}

\subsection{$R$-Functions}
A holomorphic operator-valued function $F(\cdot):\bC\setminus\bR\to [\cH ]$ is
called an $R$-function (Nevanlinna function) if $\im z \cd \im F(z) \geq 0$ and
$F^*(z)=F(\overline z), \; z\in \bC\setminus\bR$.
The class of $R$-functions with values in $[\cH]$ is denoted by $R[\cH]$. Every
$F(\cdot)\in R [\cH]$ admits an integral representation of the form (see
\cite{Bro69})
  \begin {equation} \label{2.2}
F(z) = C + D z + \int_{\bR} \left(\frac{1}{t-z}- \frac{t}{1 + t^2}\right)
d\S,\quad z\in \bC\setminus\bR,
   \end{equation}
where $C,\;D\in [\cH],\;C = C^*, \ D\geq 0$ and $\S(\cd):\cB_b(\bR)\to [\cH]$
is an operator measure satisfying
\begin {equation} \label{2.2a}
\int_\bR  \frac{d \S } {1+t^2}  \in [\cH]
\end{equation}
(the integrals are understood in the strong sense).  The operator measure
$\S(\cd)$  and the operators\   $C, \;D$ in \eqref{2.2} are called the spectral
measure and parameters of  $F(\cd)$, respectively. They are uniquely defined by
$F(\cd)$. Moreover, a distribution operator function $\S(\cd):\bR\to [\cH]$
defined by
\begin {equation} \label{2.2b}
\S (t)=\left\{\begin{array}{cl}
   \S ([0,t)), & t>0, \cr
    0,         & t=0, \cr
  -\S([t,0)),  & t<0,
\end{array}
\right.
\end{equation}
is called the (normalized) spectral function of $F(\cd)$.

%
%
%
%

\subsection{Boundary triplets and the Weyl functions}
Let $\gH$ be a  Hilbert space, let $A$ be a closed symmetric operator in $\gH$,
not necessarily densely defined, and let $A^*(\in\C (\gH))$ be the adjoint of
$A$. Assume also that the operator $A$ has equal deficiency indices
$n_\pm(A)=\dim(\ker (A^*\mp i)) \leq \infty$.

Here we briefly recall the basic facts on  boundary triplets and the
corresponding Weyl functions following \cite{GG91,DM91,Mal92,DM95}.
\begin{definition}[\cite{GG91,Mal92}]\label{def2.2}
A triplet $\Pi = \{\cH, \G_0, \G_1\}$, where $\cH$ is a Hilbert space and
$\G_0,\G_1:\  A^*\rightarrow \cH$ are linear mappings,  is called a boundary
triplet for $A^*$ if the following "abstract Green's identity" holds
\begin{equation}\label{2.3}
(f',g) - (f,g') = (\G_1 \wh f,\G_0 \wh g)_{\cH} - (\G_0 \wh f,\G_1\wh g)_{\cH},
\;\;\;\, \wh f=\{f,f'\},\  \wh g=\{g,g'\}\in A^*
\end{equation}
and the mapping $\G:=(\Gamma_0\;\;\Gamma_1)^\top:  A^* \rightarrow \cH \oplus
\cH$ is surjective.
\end{definition}
Note that $A^*$ is a densely defined operator if and only if $\dom A$ is dense
in $\gH$. In this case the identity \eqref{2.3} can be rewritten in the
equivalent form
\begin{equation}\label{2.3Dense}
(A^*f,g) - (f,A^*g) = (\G_1  f,\G_0  g)_{\cH} - (\G_0f,\G_1 g)_{\cH}, \quad f,
g\in \dom A^*.
\end{equation}

A boundary triplet $\Pi=\{\cH,\G_0,\G_1\}$ for $A^*$ exists whenever $n_+(A) =
n_-(A)$. Moreover, one has $n_\pm(A) = \dim\cH$, $\ker\Gamma_j$ ($j=1,2$) is
selfadjoint, and $\ker\Gamma_0 \cap \ker\Gamma_1=A$. It is known that
$\Gamma_0, \Gamma_1\in [A^*, \cH]$, that is the operators $\G_0$ and $\G_1$ are
bounded with respect to the graph topology on $A^*$.
   \begin{definition}\label{def2.3}
A closed linear relation $\wt A\in\C (\gH)$ is called a proper extension of
$A$, if $A \subsetneqq \wt A \subsetneqq A^*$. The set of proper extensions
augmented by $A$ and $A^*$ is denoted by $\exa.$
     \end{definition}
Clearly, every self-adjoint extension $\wt A= {\wt A}^*$ is automatically
proper, i.e., $\wt A\in \exa$. Moreover, every closed dissipative extension
$\wt A$ is also proper (see \cite{Mal92}). With a boundary triplet $\Pi$ one
typically fixes two extensions $A_j:=\ker\G_j, \ j\in \{0,1\}$ of $A$, which
are self-adjoint in view of Proposition \ref{pr2.4} below. Conversely, for
every $A_0=A_0^*\in \exa$ there exists a (non-unique) boundary triplet
$\Pi=\{\cH,\G_0,\G_1\}$ for $A^*$ such that $A_0=\ker\G_0$.

Using the concept of a boundary triplet one can parameterize the set of all
proper extensions of $A$ by means of the set $\C(\cH)$ of closed linear
relations in $\cH$.
%
\begin{proposition}\label{pr2.4}
Let  $\Pi=\{\cH,\G_0,\G_1\}$  be a boundary triplet for  $A^*.$  Then the
mapping
\begin {equation}\label{bij}
\exa\ni\ \widetilde A \to  \Gamma \widetilde A =\{\{\Gamma_0 \wh f,\Gamma_1 \wh
f \} : \wh f\in \widetilde A \} =: \Theta \in \widetilde\cC(\cH)
\end{equation}
establishes  a bijective correspondence between the sets $\exa$ and
$\widetilde\cC(\cH)$. We put $A_\Theta :=\widetilde A$ where $\Theta$ is
defined by \eqref{bij}. Moreover, the following statements hold:

{\rm (i)} if $\Theta:=B\in \cC(\cH)$ is an operator, then \eqref{bij} takes the
form
   \begin{equation} \label{2.4}
 A_B= \ker (\G_1-B\G_0);
   \end{equation}

{\rm (ii)} the extension $A_\Theta\in\exa$ is $m$-dissipative,
$m$-accumulative,  self-adjoint, if and only if so is the corresponding linear
(boundary) relation  $\Theta\in \widetilde\cC(\cH)$.
    \end{proposition}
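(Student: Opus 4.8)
The plan is to exhibit the inverse of the map \eqref{bij} explicitly and then to check closedness and the structural statements (i), (ii) in turn. First I would set $A_\Theta := \{\wh f\in A^*: \G\wh f\in\Theta\}$ for $\Theta\in\widetilde\cC(\cH)$, where $\G=(\G_0\;\;\G_1)^\top$. Since $\ker\G=\ker\G_0\cap\ker\G_1=A$ and $\G$ is surjective onto $\cH\oplus\cH$, purely algebraic manifold-theoretic identities give $\G^{-1}(\G\wt A)=\wt A$ for every linear manifold $\wt A$ with $A\subseteq\wt A\subseteq A^*$ (because $\ker\G\subseteq\wt A$) and $\G(\G^{-1}\Theta)=\Theta$ for every $\Theta\subseteq\cH\oplus\cH$ (by surjectivity). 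Hence $\wt A\mapsto\G\wt A$ and $\Theta\mapsto A_\Theta$ are mutually inverse, inclusion-preserving bijections between the manifolds $\wt A$ with $A\subseteq\wt A\subseteq A^*$ and all manifolds in $\cH\oplus\cH$. What remains is to match the closed objects on the two sides.

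The step I expect to be the main obstacle is the closedness matching, which is the only genuinely analytic point. Here I would use that $\G_0,\G_1\in[A^*,\cH]$, so that $\G$ is bounded from $A^*$ — a Hilbert space in its graph norm, being closed in $\gH\oplus\gH$ — onto $\cH\oplus\cH$; the open mapping theorem then makes the induced map $A^*/A\to\cH\oplus\cH$ a topological isomorphism. Under it, closed subspaces correspond to closed subspaces, and closed subspaces of $A^*$ containing $A$ correspond to closed subspaces of $A^*/A$. Since the graph norm on $A^*$ is the $\gH\oplus\gH$ norm, closedness in $A^*$ coincides with closedness in $\gH\oplus\gH$. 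Thus $\wt A$ is closed iff $\Theta=\G\wt A$ is closed, which with the first paragraph yields the bijection $\exa\leftrightarrow\widetilde\cC(\cH)$. Statement (i) is then immediate: if $\Theta=\{\{u,Bu\}:u\in\dom B\}$, then $\G\wh f\in\Theta$ says $\G_1\wh f=B\G_0\wh f$, i.e. $A_\Theta=\ker(\G_1-B\G_0)$, which is \eqref{2.4}.

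For (ii) the tool I would establish first is the adjoint formula $(A_\Theta)^*=A_{\Theta^*}$. Since $A\subseteq A_\Theta$ forces $(A_\Theta)^*\subseteq A^*$, every $\wh g=\{g,g'\}\in(A_\Theta)^*$ already lies in $A^*$, so Green's identity \eqref{2.3} applies and rewrites the defining condition $(f',g)-(f,g')=0$ for all $\wh f\in A_\Theta$ as $(\G_1\wh f,\G_0\wh g)-(\G_0\wh f,\G_1\wh g)=0$; as $\G\wh f$ runs through all of $\Theta=\G A_\Theta$, this is exactly $\G\wh g\in\Theta^*$. Hence $(A_\Theta)^*=A_{\Theta^*}$, and by injectivity of the correspondence $A_\Theta=(A_\Theta)^*$ iff $\Theta=\Theta^*$, settling the self-adjoint case (and likewise $A_\Theta\subseteq(A_\Theta)^*$ iff $\Theta\subseteq\Theta^*$). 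For the dissipative case I would put $\wh f=\wh g$ in \eqref{2.3} to get $\im(f',f)=\im(\G_1\wh f,\G_0\wh f)$, so $A_\Theta$ is dissipative (accumulative) iff $\Theta$ is. Finally, since the correspondence is an inclusion-preserving bijection preserving dissipativity, it carries maximal dissipative relations to maximal dissipative relations, yielding the $m$-dissipative and $m$-accumulative cases; the one point to invoke here is the cited fact that every closed dissipative extension of $A$ is already proper, so that maximality within $\exa$ coincides with maximality among all closed relations in $\gH$.
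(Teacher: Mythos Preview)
The paper does not actually prove Proposition~\ref{pr2.4}; it is stated in the preliminaries as a known result (implicitly drawn from \cite{GG91,DM91,Mal92,DM95}), so there is no ``paper's own proof'' to compare against. Your argument is correct and follows the standard route one finds in those references: use $\ker\G=A$ and surjectivity of $\G$ to get the algebraic bijection, invoke boundedness of $\G$ in the graph topology together with the open mapping theorem to match closedness, derive $(A_\Theta)^*=A_{\Theta^*}$ from the Green identity, read off dissipativity from $\im(f',f)=\im(\G_1\wh f,\G_0\wh f)$, and pass to maximality via the inclusion-preserving nature of the bijection plus the fact (which the paper cites just after Definition~\ref{def2.3}) that closed dissipative extensions are automatically proper.
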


The linear relation $\Theta$ (the operator $B$) is called \emph{the boundary
relation (the boundary operator)} or the coordinate of the extension
$A_\Theta$. In particular,  $A_j:=\ker \G_j = A_{\Theta_j},\; j\in \{0,1\},$
where $\Theta_0:= \{0\} \times \cH$ and $\Theta_1 := \cH \times \{0\}$. Hence
$A_j= A_j^*$ since clearly $\Theta_j = \Theta^*_j$. In the sequel the extension
$A_0$ is usually regarded as a reference self-adjoint extension.

It is well known that Weyl functions play an important role in the direct and
inverse spectral theory of singular Sturm-Liouville operators. In the papers
\cite{DM91,Mal92} the concept of Weyl function was generalized to the case of
an arbitrary
symmetric operator $A$ with  $n_+(A) = n_-(A).$  
Some basic facts on the Weyl functions and $\gamma$-fields are now briefly
recalled.

Let  $\gN_z(A):=\ker (A^*-z)$ be the defect subspace of  $A$ and let $\wh \gN_z
(A):=\{\{f_z, z f_z\}:\,f_z\in\gN_z(A)\}$. Clearly, $\wh\gN_z (A)$ is a
(closed) subspace in $A^*$.  Denote by $\pi_1$ the orthoprojector in
$\gH\oplus\gH$ onto $\gH\oplus\{0\}$.
\begin{definition}[{\cite{DM91,Mal92}}]\label{Weylfunc}
 Let $\Pi=\{\cH,\G_0,\G_1\}$ be a boundary triplet  for $A^*.$
The operator functions $\gamma(\cdot): \rho(A_0)\rightarrow [\cH,\gH]$ and
$M(\cdot): \rho(A_0)\rightarrow  [\cH]$ defined  by
\begin {equation}\label{2.5}
\gamma(z):=\pi_1\bigl(\Gamma_0\!\upharpoonright\wh\gN_z(A)\bigr)^{-1}, \;\;\;
\G_1\up\wh\gN_z(A)= M(z)\G_0\up\wh\gN_z(A), \quad z\in\rho (A_0),
\end{equation}
 are called the {\em $\gamma$-field} and the
{\em Weyl function}, respectively, corresponding to  $\Pi.$
 \end{definition}

It is  shown in \cite{DM91,Mal92} that the operator functions $\g(\cd)$ and
$M(\cd)$ are well defined and holomorphic on $\rho
(A_0).$  
Moreover, $M(\cdot)\in R^u[\cH]$, i.e.,  $M(\cdot)\in R[\cH]$ and
$0\in \rho(\im(M(i)))$.   

\noindent
A symmetric operator $A$ in $\gH$ is called simple if there is no nontrivial
decomposition
$$
\gH=\gH_1\oplus\gH_2, \quad A=A_1\oplus A_2,
$$
where $\gH_1 \neq \{0\}$, $A_1 = A_1^*\in \cC(\gH_1)$  and $A_2$ is a symmetric
operator in $\gH_2$.
%
%
\begin{proposition}\label{pr2.4.1}$\,$ \cite{Mal92,DM95,MalNei02}
Let $A$ be a simple symmetric operator in $\gH$ and let $\Pi= \bt$ be a
boundary triplet for $A^*$ with $A_0=\ker \G_0$. Let $M(\cd)$ be the
corresponding Weyl function and let $\S(\cd)$ be the spectral measure of
$M(\cd)$. If $\mul A_0\neq\gH$, then the spectral measure $E(\cd)$ of $A_0$ and
the measure $\S(\cdot)$ are equivalent, $E\sim \S.$  In particular, $E^{ac}\sim
\S^{ac}$ and $E^{s}\sim \S^{s}$.
    \end{proposition}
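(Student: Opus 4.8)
The plan is to realize $A_0$ concretely through the $\gamma$-field and thereby transport the spectral theory of $A_0$ onto the operator measure $\S$. The two identities I would start from are the resolvent formula for the $\gamma$-field,
\[
(A_0-z)^{-1}\gamma(w)=\tfrac{1}{z-w}\bigl(\gamma(z)-\gamma(w)\bigr),\qquad z,w\in\rho(A_0),
\]
and the reproducing-type relation, valid for all $h,g\in\cH$,
\[
(\gamma(z)h,\gamma(w)g)_\gH=\Bigl(\tfrac{M(z)-M(\bar w)}{z-\bar w}\,h,g\Bigr)_\cH ,
\]
both of which follow from Definition \ref{Weylfunc} and the $R^u[\cH]$-property of $M(\cd)$. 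Taking $w=z$ in the second identity gives $\im M(z)=\im z\cdot\gamma(z)^*\gamma(z)$, and comparing with the integral representation \eqref{2.2} of $M(\cd)$ yields $\gamma(z)^*\gamma(z)=D+\int_\bR\bigl((t-\re z)^2+(\im z)^2\bigr)^{-1}\,d\S(t)$; this is the quantitative bridge between $\gamma(\cd)$ and the measure $\S$.

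Next I would build a functional model. Let $L^2(\S)$ denote the Hilbert space of $\cH$-valued functions square-integrable against $\S$, let $Q$ be multiplication by the independent variable $t$ in $L^2(\S)$, and define $V$ on the span $\cL:=\mathrm{span}\{\gamma(z)h:\ z\in\cmr,\ h\in\cH\}$ by $V(\gamma(z)h):=(t-z)^{-1}h$. Using the decomposition $\frac{1}{(t-z)(t-\bar w)}=\frac{1}{z-\bar w}\bigl(\frac{1}{t-z}-\frac{1}{t-\bar w}\bigr)$ together with \eqref{2.2}, one computes the Cauchy-kernel inner product $\int_\bR\frac{d\S(t)}{(t-z)(t-\bar w)}=\frac{M(z)-M(\bar w)}{z-\bar w}-D$, which matches the reproducing relation above once the contribution of $D$ is accounted for. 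Since $A$ is simple and $\mul A_0\neq\gH$, I would argue that, after subtracting the (common, $z$-independent) $\mul A_0$-component of $\gamma(z)h$, the resulting span is dense in $\gH':=\gH\ominus\mul A_0$ and $V$ extends to an isometry of $\gH'$ into $L^2(\S)$; surjectivity follows from the totality of the functions $\{(t-z)^{-1}h\}$ in $L^2(\S)$, so that $V$ is unitary.

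It then remains to identify the image of $A_0$. Applying $V$ to the resolvent formula shows $V(A_0-z)^{-1}V^{-1}$ is multiplication by $(t-z)^{-1}$, i.e.\ $VA_0'V^{-1}=Q$, where $A_0'$ is the operator part of $A_0$. The spectral measure $F(\cd)$ of $Q$ in $L^2(\S)$ acts by multiplication by the indicators $\chi_\d$, whence $(F(\d)f,f)=\int_\d(d\S(t)f,f)$ and therefore $F(\d)=0\Llr\S(\d)=0$; that is, $F\sim\S$. Since $V$ is unitary and unitary equivalence transports the spectral measure while respecting the Lebesgue decomposition \eqref{2.1}, I conclude that $E\sim F\sim\S$, and moreover $E^{ac}\sim\S^{ac}$ and $E^{s}\sim\S^{s}$.

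The main obstacle I anticipate is the careful verification that $V$ is well defined and isometric, independently of how a vector in $\cL$ is written as a combination of the $\gamma(z)h$; this is exactly where the reproducing identity must be reconciled with the Cauchy-kernel inner product in $L^2(\S)$, and where the linear term $D$ in \eqref{2.2} must be handled. The point is that $D$ corresponds precisely to the multivalued part $\mul A_0$ (the spectral mass of $A_0$ ``at infinity''), so the restriction to $\gH'$ removes it and makes the matching clean; keeping track of this correspondence is the delicate part when $A$ is not densely defined. The second genuine use of simplicity is the totality/surjectivity step: one must show that a function in $L^2(\S)$ orthogonal to every $(t-z)^{-1}h$ vanishes, equivalently that $\overline{\cL}=\gH'$, after which the unitary equivalence $A_0'\cong Q$ and the conclusion $E\sim\S$ are immediate.
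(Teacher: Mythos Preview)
The paper does not prove this proposition; it is quoted from \cite{Mal92,DM95,MalNei02} as a known preliminary result, so there is no argument in the paper to compare against.

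Your outline is the standard functional-model route and is correct. The one step that deserves a sharper formulation is the identification of the linear term $D$ with $\mul A_0$. Concretely: since $(A_0-z)^{-1}=(A_0'-z)^{-1}P_{\gH'}$ for the operator part $A_0'$ in $\gH':=\gH\ominus\mul A_0$, the component $P_{\mul A_0}\gamma(z)$ is independent of $z$ (it equals $P_{\mul A_0}\gamma(i)$), and from $\gamma(iy)^*\gamma(iy)\to D$ as $y\to\infty$ one gets $D=\gamma(i)^*P_{\mul A_0}\gamma(i)$. Hence
\[
(P_{\gH'}\gamma(z)h,\,P_{\gH'}\gamma(w)g)_{\gH}
=\Bigl(\tfrac{M(z)-M(\bar w)}{z-\bar w}\,h,g\Bigr)_{\cH}-(Dh,g)_{\cH}
=\int_{\bR}\frac{(d\S(t)h,g)}{(t-z)(t-\bar w)},
\]
which is exactly the Cauchy-kernel inner product in $L^2(\S)$; so $V:P_{\gH'}\gamma(z)h\mapsto (t-z)^{-1}h$ is isometric. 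With this in hand, well-definedness of $V$ is automatic, density of $P_{\gH'}\cL$ in $\gH'$ follows from simplicity of $A$ (which gives $\overline{\cL}=\gH$), and surjectivity is the totality of the Cauchy kernels $\{(t-z)^{-1}h\}$ in $L^2(\S)$. The intertwining $VA_0'V^{-1}=Q$ then comes from your resolvent identity for $\gamma$, and $E\sim\S$ follows. There is no genuine gap.
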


A description of spectra of proper extensions $A_\Theta$ of $A$ in terms of
$\Theta$ and the corresponding Weyl function $M(\cd)$ is given as follows.
    \begin{proposition}\label{pr2.5}$\,$\cite{DM91, DM95}
Let  $\Pi = \{\cH, \G_0,\G_1\}$ be  a boundary triplet for $A^*$ with the Weyl
function $M(\cd)$ and let $\Theta\in \C(\cH)$. Then for all $\l\in \rho(A_0)$
the following equivalences hold:
\begin{eqnarray*}
& \l\in\rho(A_\Theta)\iff 0\in \rho(\Theta-M(\l)),  \\
& \l\in\sigma_j (A_\Theta)\iff 0\in \sigma_j(\Theta-M(\l)),\quad
 j\in \{p,c,r\}.
\end{eqnarray*}
    \end{proposition}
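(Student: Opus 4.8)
The plan is to reduce every spectral question about $A_\Theta$ at a point $\lambda\in\rho(A_0)$ to the corresponding question about the bounded operator $\Theta-M(\lambda)$ in $\cH$ by passing through the boundary maps. First I would record the von Neumann-type decomposition $A^*=A_0\dotplus\wh\gN_\lambda(A)$, valid for every $\lambda\in\rho(A_0)$: each $\wh f=\{f,f'\}\in A^*$ splits uniquely as $\wh f=\wh f_0+\wh f_\lambda$ with $\wh f_0\in A_0=\ker\G_0$ and $\wh f_\lambda=\{f_\lambda,\lambda f_\lambda\}\in\wh\gN_\lambda(A)$. On the two summands the boundary maps act transparently: $\G_0\wh f_0=0$, while by \eqref{2.5} one has $f_\lambda=\gamma(\lambda)\G_0\wh f_\lambda$ and $\G_1\wh f_\lambda=M(\lambda)\G_0\wh f_\lambda$, so that $\G_0$ is a linear isomorphism of $\wh\gN_\lambda(A)$ onto $\cH$. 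Using $\wh f\in A_\Theta\iff\{\G_0\wh f,\G_1\wh f\}\in\Theta$ from Proposition \ref{pr2.4}, this turns membership in $A_\Theta$ into a boundary condition in $\cH$.

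For the point spectrum I would observe that $f\in\ker(A_\Theta-\lambda)$ means $\wh f=\{f,\lambda f\}\in A_\Theta\cap\wh\gN_\lambda(A)$; writing $u:=\G_0\wh f$ and using $\G_1\wh f=M(\lambda)u$ gives $\{u,M(\lambda)u\}\in\Theta$, i.e.\ $u\in\ker(\Theta-M(\lambda))$, and conversely. Since $f=\gamma(\lambda)u$ and $\gamma(\lambda)$ is injective, this yields $\ker(A_\Theta-\lambda)=\gamma(\lambda)\ker(\Theta-M(\lambda))$ and in particular the equivalence for $j=p$.

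The range is the heart of the matter. Given $h\in\gH$, I would set $\wh f_0:=\{(A_0-\lambda)^{-1}h,\,h+\lambda(A_0-\lambda)^{-1}h\}\in A_0$, so that $\G_0\wh f_0=0$ and, by a short computation with Green's identity \eqref{2.3}, $\G_1\wh f_0=\gamma(\bar\lambda)^*h$. Solving $(A_\Theta-\lambda)\wh f=h$ with $\wh f=\wh f_0+\wh f_\lambda$ then amounts, upon setting $u:=\G_0\wh f_\lambda$, to the condition $\{u,\gamma(\bar\lambda)^*h+M(\lambda)u\}\in\Theta$, that is $\gamma(\bar\lambda)^*h\in(\Theta-M(\lambda))u$. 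Hence $h\in\ran(A_\Theta-\lambda)$ if and only if $\gamma(\bar\lambda)^*h\in\ran(\Theta-M(\lambda))$, equivalently $\ran(A_\Theta-\lambda)=(\gamma(\bar\lambda)^*)^{-1}\bigl(\ran(\Theta-M(\lambda))\bigr)$. The key structural fact I would invoke is that $\gamma(\bar\lambda):\cH\to\gN_{\bar\lambda}(A)$ is a topological isomorphism onto a closed subspace, so $\gamma(\bar\lambda)^*:\gH\to\cH$ is a bounded surjection admitting a bounded right inverse. From surjectivity alone, $\ran(A_\Theta-\lambda)=\gH\iff\ran(\Theta-M(\lambda))=\cH$, which together with the kernel analysis settles the resolvent-set equivalence, boundedness of $(A_\Theta-\lambda)^{-1}$ being then automatic from the closed graph theorem since $A_\Theta$ is closed.

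For the finer classification I would transfer closures: using the bounded right inverse of $\gamma(\bar\lambda)^*$ one checks $\cran(A_\Theta-\lambda)=(\gamma(\bar\lambda)^*)^{-1}\bigl(\cran(\Theta-M(\lambda))\bigr)$, whence $\cran(A_\Theta-\lambda)=\gH\iff\cran(\Theta-M(\lambda))=\cH$. Combining the equivalences for $\ker$, $\ran$ and $\cran$ and matching them against the definitions of $\sigma_c$, $\sigma_p$, $\sigma_r$ recalled above yields the remaining cases $j\in\{c,r\}$. The main obstacle is precisely this closure transfer, i.e.\ distinguishing $\sigma_c$ from $\sigma_r$: the inclusion $\cran(A_\Theta-\lambda)\subseteq(\gamma(\bar\lambda)^*)^{-1}(\cran(\Theta-M(\lambda)))$ is immediate from continuity, but the reverse inclusion requires lifting an approximating sequence for $\gamma(\bar\lambda)^*h$ back to $\gH$ without losing convergence, which is exactly where the bounded right inverse (open mapping theorem) enters. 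A secondary point demanding care is the purely relation-theoretic bookkeeping, since both $A_\Theta$ and $\Theta-M(\lambda)$ are linear relations rather than operators.
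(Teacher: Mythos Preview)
The paper does not supply a proof of this proposition; it is quoted from \cite{DM91,DM95} as a known result, so there is no ``paper's own proof'' to compare against. Your argument is the standard one from those references: decompose $A^*=A_0\dotplus\wh\gN_\lambda(A)$, identify $\ker(A_\Theta-\lambda)$ with $\gamma(\lambda)\ker(\Theta-M(\lambda))$, and use the identity $\G_1\{(A_0-\lambda)^{-1}h,\,\lambda(A_0-\lambda)^{-1}h+h\}=\gamma(\bar\lambda)^*h$ to rewrite $\ran(A_\Theta-\lambda)=(\gamma(\bar\lambda)^*)^{-1}\bigl(\ran(\Theta-M(\lambda))\bigr)$. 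The closure-transfer step via a bounded right inverse of the surjection $\gamma(\bar\lambda)^*$ is carried out correctly: given $k_n\to\gamma(\bar\lambda)^*h$ with $k_n\in\ran(\Theta-M(\lambda))$, the sequence $h_n:=h-R(\gamma(\bar\lambda)^*h-k_n)$ satisfies $\gamma(\bar\lambda)^*h_n=k_n$ and $h_n\to h$, which is exactly what is needed to separate $\sigma_c$ from $\sigma_r$ under the paper's definitions. The outline is sound and matches the approach in the cited sources.
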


\subsection{Boundary triplets and Weyl functions for a dual pair $\DA$}
%
%
%
%
%
%

Clearly, every symmetric operator $A$ in $\gH$ generates a dual pair of the
form $\DA$, and vise versa. In this case the definition of a boundary triplet
for a general dual pair from \cite{L-S,MalMog02} (see also Definition
\ref{def1.1}) is simplified and reads as follows.
   \begin{definition}\label{def2.10}
Let $A$ be a closed symmetric operator in $\gH$ with equal deficiency indices
$n_+(A)=n_-(A)\le\infty.$
A collection $\Pi=\bta$, where $\cH$ is a Hilbert space and
\begin {equation*}
 \G=\begin{pmatrix}\G_0\cr\G_1\end{pmatrix} : A^*\to\cH\oplus\cH,
 \quad \G^\top=
 \begin{pmatrix}\G_0^\top\cr\G_1^\top\end{pmatrix} :A^*\to\cH\oplus\cH
\end{equation*}
are linear mappings, is called a boundary triplet for $\DA$ if the mappings
$\G$ and $\G^\top$ are surjective and the following abstract Green's identity
holds
  \begin {equation}\label{2.11}
(f',g)-(f,g')=(\G_1 \wh f,\G ^\top_0 \wh g)- (\G_0\wh f,\G^\top_1 \wh g), \quad
\wh f=\{f,f'\}, \  \wh g=\{g,g'\}\in  A^*.
   \end{equation}
  \end{definition}
In the sequel we consider only dual pairs $\DA$ and specify for this case some
results from \cite{L-S,MalMog02,MalMog05} on boundary triplets of general dual
pairs $\DP$. Each boundary triplet $\Pi= \bta$ for $\DA$ satisfies the
relations
$$
\dim\cH=n_\pm(A) \quad \text{and}\quad
\ker\G_0\cap\ker\G_1=\ker\G_0^\top\cap\ker\G_1^\top=A.
$$
With a boundary triplet $\Pi= \bta$ for $\DA$ one associates two extensions
$A_j=\ker\G_j(\in\exa), \; j\in \{0,1\}$, which in this case are not
necessarily self-adjoint. In what follows it is always assumed that $\rho
(A_0)\neq \emptyset$.

The following result is a counterpart of Proposition \ref{pr2.4} for dual pairs
$\DA$.
       \begin{proposition}\label{pr2.11}
Let $\Pi=\bta$ be a boundary triplet for $\DA$. Then the mapping \eqref{bij}
establishes a bijective correspondence between the sets $\exa$ and $\C(\cH)$.
Moreover, with $\Theta:=B\in \cC(\cH)$ the formula \eqref{bij} takes the form
\eqref{2.4}.
  \end{proposition}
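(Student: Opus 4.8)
The plan is to mimic the proof of Proposition \ref{pr2.4}, noting that the only features of $\G=(\G_0\;\;\G_1)^\top$ needed for the bijection are its surjectivity (Definition \ref{def2.10}) together with the identity $\ker\G=\ker\G_0\cap\ker\G_1=A$ recorded above; neither the second map $\G^\top$ nor the Green's identity \eqref{2.11} enters here.

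First I would record the purely algebraic bijection. View $A^*\in\C(\gH)$ as a Hilbert space under the graph norm $\|\wh f\|^2=\|f\|^2+\|f'\|^2$ and regard $\G$ as a linear map of $A^*$ onto $\cH\oplus\cH$ with $\ker\G=A$. For any manifold $\wt A$ with $A\subseteq\wt A\subseteq A^*$ one has $\wt A=\G^{-1}(\G\wt A)$ because $\wt A\supseteq\ker\G$, while $\G(\G^{-1}\Theta)=\Theta$ for every $\Theta\subseteq\cH\oplus\cH$ because $\G$ is surjective. Hence $\wt A\mapsto\G\wt A=:\Theta$ and $\Theta\mapsto\G^{-1}\Theta$ are mutually inverse bijections between the manifolds $\wt A$ with $A\subseteq\wt A\subseteq A^*$ and the linear relations $\Theta$ in $\cH$.

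Next I would upgrade this to a bijection between the closed objects $\exa$ and $\C(\cH)$. Here I would use that $\G_0,\G_1\in[A^*,\cH]$ also for a dual-pair triplet, i.e. continuity in the graph norm (cf. \cite{L-S,MalMog02,MalMog05}), so that $\G$ is a bounded surjection of Hilbert spaces. Continuity of $\G$ immediately gives that $\G^{-1}\Theta$ is closed whenever $\Theta\in\C(\cH)$, and $A\subseteq\G^{-1}\Theta\subseteq A^*$, so $\G^{-1}\Theta\in\exa$. For the opposite direction I would invoke the open mapping theorem: $\G$ factors as a topological isomorphism $A^*/A\cong\cH\oplus\cH$, so the image of a closed subspace containing $\ker\G$ is again closed, whence $\wt A\in\exa$ forces $\G\wt A\in\C(\cH)$. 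The extremes match, $\G A=\{\{0,0\}\}$ and $\G A^*=\cH\oplus\cH$, so the correspondence carries all of $\exa$ onto all of $\C(\cH)$.

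The last assertion is then just an unwinding of \eqref{bij} with $\Theta=B\in\cC(\cH)$: for $\wh f\in A^*$ the membership $\{\G_0\wh f,\G_1\wh f\}\in\Theta$ is equivalent to $\G_1\wh f=B\G_0\wh f$, so $\wt A=\G^{-1}\Theta=\ker(\G_1-B\G_0)=A_B$, which is \eqref{2.4}. The step I expect to require the most care is the closedness bookkeeping in the third paragraph---in particular importing the graph-norm boundedness of $\G_0,\G_1$ for $\DA$, which is automatic for ordinary boundary triplets but must be supplied here, and then deducing closedness of the image $\G\wt A$ from the openness of $\G$. If one prefers to avoid the open mapping theorem, the decomposition $A^*=A_0\,\dotplus\,\wh\gN_z(A)$, valid since $\rho(A_0)\neq\emptyset$, gives a concrete realization of $\G$ from which closedness can be read off directly.
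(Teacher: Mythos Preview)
Your argument is correct and is the standard one: surjectivity of $\G$ together with $\ker\G=A$ gives the set-theoretic bijection, and graph-norm boundedness of $\G$ plus the open mapping theorem upgrades it to closed objects. The final unwinding for $\Theta=B$ is routine.

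The paper, however, does not actually prove Proposition~\ref{pr2.11}. It is stated in the preliminaries section as a specialization to symmetric dual pairs $\DA$ of results already established in \cite{L-S,MalMog02,MalMog05} for general dual pairs $\DP$; see the sentence immediately preceding the proposition. So there is no ``paper's proof'' to compare against---you have supplied a proof where the paper merely cites one. Your remark that the boundedness $\G_0,\G_1\in[A^*,\cH]$ must be imported from those references (rather than from the ordinary-triplet case) is exactly right, and is the only point where one has to reach outside the text of the present paper.
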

We usually  indicate the correspondence in Proposition \ref{pr2.11} as $\wt
A=A_\Theta$. The linear relation $\Theta$ is called the boundary relation or
the coordinate, of the proper extension $\wt A=A_\Theta$ in the triplet $\Pi$.
  \begin{definition}\label{def2.12}
Let $\Pi=\bta$  be a boundary triplet for  $\DA$. The operator functions
$\g_{\Pi} (\cd) :\rho(A_0)\to [\cH, \gH]$ and $M_\Pi (\cdot):\rho (A_0)\to
[\cH] $ defined by 
\begin {equation}\label{2.12}
\gamma_{\Pi}(z):= \pi_1(\Gamma_0\up  \wh {\gN}_z (A))^{-1},  \quad
\G_1\up\wh\gN_z(A)= M_\Pi(z)\G_0\up\wh\gN_z(A)
\end{equation}
with $z\in \rho (A_0)\ne\emptyset$ are called, respectively, the $\g$-field and
the Weyl function corresponding to $\Pi$.
  \end{definition}
In other words the Weyl function $M_\Pi(\cdot)$ is defined as follows (cf.
\eqref{2.5})
  \begin {equation}\label{1.6}
\G_1\{f_z, z f_z\}=M_\Pi(z)\G_0\{f_z, z f_z\}, \quad f_z\in \ker (A^*-z), \quad
z\in\rho (A_0).
  \end{equation}
The functions $\g_\Pi(\cd)$ and $M_\Pi(\cd)$ are well defined and holomorphic
on $\rho (A_0)$.

Each boundary triplet $\Pi=\bta$ generates a (reversed) boundary triplet
$\Pi_\top=\{\cH\oplus\cH, \G_\top, (\G_\top)^\top\}$ for $\DA$, which is
defined by
$$
\G_\top=\G^\top=\begin{pmatrix}\G_0^\top\cr\G_1^\top\end{pmatrix} :
A^*\to\cH\oplus\cH \;\;\; \text{and}\;\;\; (\G_\top)^\top=\G=
\begin{pmatrix}\G_0\cr\G_1\end{pmatrix} :A^*\to\cH\oplus\cH.
$$
For this triplet one has $A_{0\top}(=\ker \G_0^\top)=A_{0}^*$ and the
corresponding $\g$-field $\g_{\Pi_\top}(\cd): \rho(A_0^*)\to [\cH, \gH]$ and
the Weyl function $M_{\Pi_\top} (\cdot):\rho (A_0^*)\to [\cH] $ are  defined
for every $z\in \rho (A_0^*)$ by
  \begin {equation*}
\gamma_{\Pi_\top}(z):= \pi_1(\Gamma_0^\top\up  \wh {\gN}_z (A))^{-1},  \quad
\G_1^\top\up\wh\gN_z(A)= M_{\Pi_\top}(z)\G_0^\top\up\wh\gN_z(A).
  \end{equation*}
\begin{remark}\label{rem2.15}
In the case that $\G^\top=\G=\begin{pmatrix} \G_0\cr\G_1\end{pmatrix}$ the
boundary triplet $\Pi=\bta$ for the dual pair $\DA$ turns into the boundary
triplet $\Pi=\bt$ for $A^*$ in Definition~\ref{def2.2}, while the Weyl function
$M_\Pi(\cd)$ given by \eqref{2.12} becomes the Weyl function $M(\cd)$ defined
in \eqref{2.5}. This remark shows that a boundary triplet $\bt$ for $A^*$ in
the sense of Definition \ref{def2.2} can be regarded as a particular case of a
boundary triplet $\bta$ for $\DA$ in the sense of Definition \ref{def2.10}. To
distinguish between  two kinds of boundary triplets,  the triplet $\Pi=\bt$ for
$A^*$ will sometimes be called an ordinary boundary triplet.
   \end{remark}

\subsection{Unitary equivalent boundary triplets}
Let $\gH_1$ and $\gH_2$ be Hilbert spaces. For each unitary operator $U\in
[\gH_1, \gH_2]$ from $\gH_1$ onto $\gH_2$ denote $\wt U :=U\oplus U\in
[\gH_1\oplus\gH_1, \gH_2\oplus\gH_2]$.
%
%
   \begin{definition}\label{def2.5a}
The linear relations $T_1 \in \C(\gH_1)$ and $T_2 \in \C(\gH_2)$ are said to be
unitarily equivalent by means of the unitary operator $U\in [\gH_1, \gH_2]$, if
$$T_2=\wt U T_1\,\left(=\{\{Uf,Uf'\}:\, \{f,f'\}\in T_1\}\right). $$
   \end{definition}
%
        \begin{definition}\label{def2.6}
Let $A^{(j)}$ be a closed symmetric operator in $\gH_j$, $j\in\{1,2\}$, and let
$\Pi_j=\{\cH\oplus\cH,\G^{(j)},\G^{\top (j)}\}$ be a boundary triplet for
$\{A^{(j)},A^{(j)}\}$. The boundary triplets $\Pi_1$ and $\Pi_2$ are said to be
unitarily equivalent (unitarily similar) by means of the unitary operator $U\in
[ \gH_1, \gH_2]$, if
\begin {equation}\label{2.8}
 \wt UA^{(1)*}=A^{(2)*}  \quad
 \text{and}\quad
 \G^{(2)}\wt U \up  A^{(1)*}=\G^{(1)},
\end{equation}
The boundary triplets $\Pi_1$ and $\Pi_2$ are said to be unitarily similar if
they are unitarily similar by means of some unitary operator $U\in [\gH_1,
\gH_2]$.
  \end{definition}
The following lemma is immediate from Definition \ref{def2.6}.
   \begin{lemma}\label{lem2.7}
Let the  boundary triplets $\Pi_1$ and $\Pi_2$ for $\{A^{(1)},A^{(1)} \}$ and

\noindent $\{A^{(2)},A^{(2)} \}$, respectively, be unitarily similar by means
of $U.$ Then for any $\Theta\in\C (\cH)$ the corresponding proper extensions
$A_\Theta^{(1)}\in Ext_{A^{(1)}}$  and $A_\Theta^{(2)}\in Ext_{A^{(2)}}$ are
also unitarily equivalent by means of the same $U$. In particular, for any
$B\in [\cH]$ the extensions $A_B^{(j)}=\ker (\G_1^{(j)}-B \G_0^{(j)}), \; j\in
\{1,2\},$ are unitarily similar by means of $U.$
   \end{lemma}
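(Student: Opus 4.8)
The plan is to verify the conclusion directly from the two intertwining relations in Definition \ref{def2.6}, treating the whole lemma as an unwinding of the bijection in Proposition \ref{pr2.11}. Recall that under that bijection the extension $A_\Theta^{(j)}$ consists precisely of those $\wh f\in A^{(j)*}$ whose boundary value $\G^{(j)}\wh f=\{\G_0^{(j)}\wh f,\G_1^{(j)}\wh f\}$ lies in $\Theta$. Writing $\wt U=U\oplus U$ and reading off Definition \ref{def2.5a}, the statement to be proved reduces to the single set equality $\wt U A_\Theta^{(1)}=A_\Theta^{(2)}$, since this is exactly what unitary equivalence of the relations $A_\Theta^{(1)}$ and $A_\Theta^{(2)}$ by means of $U$ means.

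First I would establish the inclusion $\wt U A_\Theta^{(1)}\subseteq A_\Theta^{(2)}$. Take $\wh f\in A_\Theta^{(1)}\subset A^{(1)*}$, so that $\G^{(1)}\wh f\in\Theta$. By the first relation in \eqref{2.8} we have $\wt U\wh f\in A^{(2)*}$, so $\G^{(2)}$ may legitimately be applied to it; by the second relation in \eqref{2.8}, namely $\G^{(2)}\wt U\up A^{(1)*}=\G^{(1)}$, this gives $\G^{(2)}(\wt U\wh f)=\G^{(1)}\wh f\in\Theta$. Hence $\wt U\wh f\in A_\Theta^{(2)}$, which yields the inclusion.

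For the reverse inclusion I would exploit that $\wt U$ is unitary, so $\wt U^{-1}=U^{-1}\oplus U^{-1}$ and the relations in \eqref{2.8} can be read backwards: $\wt U^{-1}A^{(2)*}=A^{(1)*}$ and $\G^{(1)}\wt U^{-1}\up A^{(2)*}=\G^{(2)}$ (the latter obtained by substituting an arbitrary $\wh g=\wt U\wh f\in A^{(2)*}$ into the identity $\G^{(2)}\wt U\wh f=\G^{(1)}\wh f$). Running the same chase on an arbitrary $\wh g\in A_\Theta^{(2)}$ then produces $\wt U^{-1}\wh g\in A_\Theta^{(1)}$, whence $\wh g\in\wt U A_\Theta^{(1)}$. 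Combining the two inclusions gives $\wt U A_\Theta^{(1)}=A_\Theta^{(2)}$, i.e. the desired unitary equivalence by means of the same $U$. The ``in particular'' clause is then immediate: specializing $\Theta$ to the graph of a (bounded) operator $B$ converts $A_\Theta^{(j)}$ into $A_B^{(j)}=\ker(\G_1^{(j)}-B\G_0^{(j)})$ via \eqref{2.4}, so these extensions are unitarily similar by the same $U$.

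The argument is essentially a definition chase, and accordingly there is no deep obstacle. The only point requiring genuine care is that the second identity in \eqref{2.8} is an equality of maps \emph{restricted to} $A^{(1)*}$, so one must invoke the first identity first, in order to guarantee that $\wt U\wh f$ really lands in the domain $A^{(2)*}$ on which $\G^{(2)}$ is defined, before the intertwining of the boundary maps can be applied. Keeping track of this domain bookkeeping, together with the fact that the boundary relation $\Theta$ is literally the same for both triplets so that membership in $\Theta$ is transported verbatim, is what makes the two inclusions line up.
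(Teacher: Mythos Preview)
Your proof is correct and is precisely the definition chase the paper has in mind: the paper does not give an explicit argument but simply states that the lemma ``is immediate from Definition~\ref{def2.6}'', and what you have written is exactly that unwinding. There is nothing to add.
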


Note that in the case $\G^{(j)}=\G^{\top (j)}, \;j\in \{1,2\},$ Definition
\ref{def2.6} coincides with the usual definition of unitary equivalence of
ordinary boundary triplets $\Pi_j=\{\cH, \G_0^{(j)}, \G_1^{(j)}\}$ for
$A^{(j)*}$. Moreover, in this case the following theorem holds.

\begin{theorem}$\!$\cite{DM91,DM95}\label{th2.8}
Let $A^{(j)}$ be a simple symmetric operator in $\gH_j$, let
$\Pi_j=\{\cH,\Gamma_0^{(j)},\Gamma_1^{(j)}\}$ be a boundary triplet for
$A^{(j)*}$ and let $M_j(\cd)$ be the corresponding Weyl function,
$j\in\{1,2\}$. Then the boundary triplets $\Pi_1$ and $\Pi_2$ are unitarily
equivalent if and only if
\begin {equation*}
 M_1(z) = M_2(z), \quad z\in \Bbb C_+.
 \end{equation*}
\end{theorem}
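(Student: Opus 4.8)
The plan is to prove the two implications separately, the substance lying entirely in sufficiency. Throughout I write $\g_j(\cd)$ for the $\g$-field of $\Pi_j$ and note that $A_0^{(j)}=\ker\G_0^{(j)}$ is self-adjoint, so $\bC\setminus\bR\subset\rho(A_0^{(j)})$ and both $M_j(\cd)$, $\g_j(\cd)$ are defined on $\bC\setminus\bR$.

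\emph{Necessity.} Suppose the triplets are unitarily equivalent by $U$, so that $\wt U A^{(1)*}=A^{(2)*}$ and $\G^{(2)}\wt U\up A^{(1)*}=\G^{(1)}$. Then $\wt U$ carries $\wh\gN_z(A^{(1)})$ onto $\wh\gN_z(A^{(2)})$, and for $\wh f_z=\{f_z,zf_z\}\in\wh\gN_z(A^{(1)})$ the intertwining gives $\G_0^{(1)}\wh f_z=\G_0^{(2)}\wt U\wh f_z$ and $\G_1^{(1)}\wh f_z=\G_1^{(2)}\wt U\wh f_z$. Substituting the defining relation $\G_1^{(j)}=M_j(z)\G_0^{(j)}$ on $\wh\gN_z$ and using that $\G_0^{(1)}\up\wh\gN_z(A^{(1)})$ is onto $\cH$, one reads off $M_1(z)=M_2(z)$ for every $z\in\bC\setminus\bR$, in particular on $\bC_+$.

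\emph{Sufficiency.} Assume $M_1=M_2$ on $\bC_+$. Since each $M_j\in R[\cH]$ obeys $M_j(\bar z)=M_j(z)^*$, the equality propagates to $\bC_-$ and so holds on all of $\bC\setminus\bR$; write $M:=M_1=M_2$. I would use the standard $\g$-field identity (cf. \cite{DM91,Mal92})
\begin{equation*}
(\g_j(z)h,\g_j(\zeta)g)_{\gH_j}=\Bigl(\tfrac{M_j(z)-M_j(\zeta)^*}{z-\bar\zeta}\,h,g\Bigr)_\cH,\qquad h,g\in\cH,\ z,\zeta\in\bC\setminus\bR,
\end{equation*}
whose right-hand side depends only on $M$. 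Hence the Gram data of the families $\{\g_1(z)h\}$ and $\{\g_2(z)h\}$ coincide, so $V:\g_1(z)h\mapsto\g_2(z)h$ extends by linearity to a well-defined isometry from $\mathrm{span}\{\g_1(z)h:z\in\bC\setminus\bR,\ h\in\cH\}$ onto the analogous span for $\Pi_2$. By simplicity of $A^{(j)}$ these spans are dense in $\gH_1$ and $\gH_2$, so $V$ closes to a unitary $U\in[\gH_1,\gH_2]$ with $U\g_1(z)=\g_2(z)$ for all $z\in\bC\setminus\bR$.

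It remains to promote $U$ to a unitary equivalence of triplets. From $\g_j(z)-\g_j(\zeta)=(z-\zeta)(A_0^{(j)}-z)^{-1}\g_j(\zeta)$ and $U\g_1=\g_2$ one gets $U(A_0^{(1)}-z)^{-1}=(A_0^{(2)}-z)^{-1}U$ on the dense set $\bigcup_\zeta\ran\g_1(\zeta)$, hence $\wt U A_0^{(1)}=A_0^{(2)}$; moreover $U\gN_z(A^{(1)})=\gN_z(A^{(2)})$, so $\wt U\wh\gN_z(A^{(1)})=\wh\gN_z(A^{(2)})$. The decomposition $A^{(j)*}=A_0^{(j)}\dotplus\wh\gN_z(A^{(j)})$ then yields $\wt U A^{(1)*}=A^{(2)*}$. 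For the boundary maps, $\G_0^{(2)}\wt U=\G_0^{(1)}$ and $\G_1^{(2)}\wt U=\G_1^{(1)}$ hold on $\wh\gN_z(A^{(1)})$ directly from $U\g_1(z)=\g_2(z)$ and $M_1=M_2$; on $A_0^{(1)}$ I would use the Green-identity representation $\G_1^{(j)}\wh f_0=\g_j(z)^*(A_0^{(j)}-\bar z)f_0$ for $\wh f_0=\{f_0,A_0^{(j)}f_0\}$ together with the relation $\g_2(z)^*U=\g_1(z)^*$. The hard part will be exactly this verification of $\G_1$ on $\ker\G_0^{(1)}$: the intertwining is transparent on the defect part $\wh\gN_z$ but on $A_0^{(1)}$ it requires expressing $\G_1\up A_0$ through the $\g$-field and invoking $\g_2(z)^*U=\g_1(z)^*$; a secondary subtlety, namely that the hypothesis is given only on $\bC_+$, is handled beforehand by restoring $M_1=M_2$ on $\bC\setminus\bR$ via $M(\bar z)=M(z)^*$ so that the simplicity criterion $\gH=\ov{\mathrm{span}}\{\gN_z(A):z\in\bC\setminus\bR\}$ applies.
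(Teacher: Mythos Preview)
The paper does not give its own proof of Theorem~\ref{th2.8}; the result is quoted from \cite{DM91,DM95} and used as a black box throughout. Your argument is the standard one found in those references: necessity is a direct computation, and sufficiency proceeds by using the identity $\g_j(\zeta)^*\g_j(z)=(M(z)-M(\zeta)^*)/(z-\bar\zeta)$ to match Gram data, building the unitary $U$ via $U\g_1(z)=\g_2(z)$ on a total set (by simplicity), and then reading off the intertwining of $A_0^{(j)}$, of $A^{(j)*}$, and of the boundary maps from the $\g$-field calculus. The proposal is correct.

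One small caveat for this paper's level of generality: the statement here allows $A^{(j)}$ to be nondensely defined, so $A_0^{(j)}=\ker\G_0^{(j)}$ may be a self-adjoint \emph{relation} with nontrivial multivalued part $\mul A_0^{(j)}$. Your formula $\G_1^{(j)}\wh f_0=\g_j(z)^*(A_0^{(j)}-\bar z)f_0$ for $\wh f_0=\{f_0,A_0^{(j)}f_0\}$ tacitly treats $A_0^{(j)}$ as an operator; in the relational setting one should write $\wh f_0=\{f_0,f_0'\}\in A_0^{(j)}$ and use $\G_1^{(j)}\wh f_0=\g_j(z)^*(f_0'-\bar z f_0)$, which is exactly what the Green identity gives. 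With that cosmetic adjustment (and the observation that $\wt U$ then also matches the purely multivalued parts $\{0\}\oplus\mul A_0^{(j)}$, on which both $\G_0^{(j)}$ and $\G_1^{(j)}$ vanish), the verification on $\ker\G_0^{(1)}$ goes through unchanged.
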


\section{Unitary equivalence of proper extensions}

\subsection{Basic lemma} 
Let $A$ be a closed symmetric, not necessarily densely defined, operator in
$\gH$ with equal deficiency indices $n_+(A)=n_-(A)\leq\infty$. If $A$ is
simple, then according to Theorem \ref{th2.8} and Lemma \ref{lem2.7}, the Weyl
function $M(\cd)$ of an ordinary boundary triplet $\Pi=\{\cH,\G_0,\G_1\}$ for
$A^*$ determines the relations $A,A^*,A_0,A_1$ uniquely up to unitary
equivalence.

In what follows we consider the following transform
\begin{equation}\label{5.1}
 \wt M(z):=K^*(B-M(z))^ {-1}K
\end{equation}
of the Weyl function $M(\cdot)$ with bounded, not necessarily self-adjoint,
operator $B$. We investigate whether the function $\wt M(\cd)$  determines the
operator $A$ and its proper extensions uniquely up to unitary equivalence. It
will be shown that in general this is not the case, although it is really true
under some additional assumptions on the relation $A_0= \ker \G_0$.

We start with the following basic lemma.

   \begin{lemma}\label{BasicLemma}
Let  $F_j(\cdot)\in R[\mathcal H_j]$ be an $R$-function and let
$\Sigma_j(\cdot),C_j,D_j$\ $j\in \{1,2\},$ be its spectral measure and the
parameters, respectively (see \eqref{2.2}), i.e.,
\begin {equation}\label{5.3}
 F_j(z)=C_j+D_j z+\int_{\bR} \left(\frac{1}{t-z}-\frac{t}{1+t^2}\right)\,d\Sigma_j,
 \quad j\in \{1,2\}.
\end{equation}
Moreover, let $\cH$ be a Hilbert space, let $B_j\in[\mathcal H_j]$ and $K_j\in
[\cH,\cH_j]$, $j\in \{1,2\}$, be operators with $0\in\rho(K_1)\cap\rho(K_2)$
and such that the set
    \begin {equation}
\Omega_+:=\{\,z\in\Bbb C_+:\
0\in\rho\bigl(B_1-F_1(z)\bigr)\cap\rho\bigl(B_2-F_2(z)\bigr)\}
    \end{equation}
is not empty, and let the operator measure $\wt\Sigma_j(\cd):\cB_b(\bR)\to
[\cH]$ and the operators $\wt C_j,\;\wt D_j,\;\wt B_j\in [\cH]$ be given by
 \begin {gather}
\wt\Sigma_j (\d):=K_j^{-1}\Sigma_j (\d)K_j^{-1*}, \quad \d\in\cB_b(\bR), \label{5.5A}\\
\wt C_j :=K_j^{-1}C_jK_j^{-1*}, \;\;\;\wt D_j :=K_j^{-1}D_jK_j^{-1*}, \;\;\;\wt
B_j:= K_j^{-1}B_jK_j^{-1*}. \label{5.5B}
       \end{gather}
Then the equality
  \begin{equation}\label{5.4}
K_1^*(B_1-F_1(z))^{-1}K_1=K_2^*(B_2-F_2(z))^{-1}K_2, \quad z\in \Omega_+,
   \end{equation}
implies the following equalities:
\begin{gather}
\wt\Sigma_1^{s}(\d) = \wt\Sigma_2^{s}(\d),\quad
 \wt\Sigma_2^{ac}(\d)-\wt\Sigma_1^{ac}(\d)
 =\frac {m(\d)}{\pi}(\im\wt B_2 - \im \wt B_1),\quad \d\in\cB_b(\bR),  \label{5.5.1} \\
 \wt C_1 - \re\wt B_1=\wt C_2- \re \wt B_2, \quad \wt D_1 = \wt D_2.     \label{5.5}
    \end {gather}
     \end{lemma}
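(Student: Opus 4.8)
The plan is to conjugate everything by $K_j^{-1}$, collapse the hypothesis to a single identity between two Nevanlinna functions, and then read off the spectral data by Stieltjes inversion. First I would introduce $\wt F_j(z):=K_j^{-1}F_j(z)K_j^{-1*}$. Since $0\in\rho(K_j)$ and $F_j\in R[\cH_j]$, a direct check gives $\wt F_j\in R[\cH]$, and applying $K_j^{-1}(\cd)K_j^{-1*}$ to \eqref{5.3} shows that the representing data of $\wt F_j$ are exactly $\wt C_j,\wt D_j,\wt\Sigma_j$ from \eqref{5.5A}--\eqref{5.5B}. The key algebraic point is the factorization $\wt B_j-\wt F_j(z)=K_j^{-1}(B_j-F_j(z))K_j^{-1*}$, so that on $\Omega_+$ one has $K_j^*(B_j-F_j(z))^{-1}K_j=(\wt B_j-\wt F_j(z))^{-1}$. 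Hence the hypothesis \eqref{5.4} reads $(\wt B_1-\wt F_1(z))^{-1}=(\wt B_2-\wt F_2(z))^{-1}$, i.e. $\wt F_2(z)-\wt F_1(z)=\wt B_2-\wt B_1=:\Delta$ for $z\in\Omega_+$.

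Next I would extend this to all of $\bC_+$. Because invertibility is an open condition and the $F_j$ are continuous, $\Omega_+$ is open and nonempty, while $\wt F_2-\wt F_1$ is holomorphic on the connected domain $\bC_+$; applying the identity theorem to the scalar maps $z\mapsto((\wt F_2(z)-\wt F_1(z))u,v)$, $u,v\in\cH$, yields $\wt F_2(z)-\wt F_1(z)=\Delta$ for every $z\in\bC_+$. Splitting $\Delta=\re\Delta+i\im\Delta$ into its (self-adjoint) real and imaginary parts, and noting $\re\Delta=\re\wt B_2-\re\wt B_1$ and $\im\Delta=\im\wt B_2-\im\wt B_1$, the remaining task is to match this constant against the difference of the two integral representations.

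To recover the measures I would take operator imaginary parts. From \eqref{2.2} one has $\im\wt F_j(x+iy)=y\wt D_j+\int_\bR\frac{y}{(t-x)^2+y^2}\,d\wt\Sigma_j$, a genuine Poisson integral of the nonnegative measure $\wt\Sigma_j$, so $\im\wt F_2-\im\wt F_1\equiv\im\Delta$. Integrating over a bounded interval $\d$ and letting $y\downarrow0$, the $y\wt D_j$ term drops out while the Poisson term converges, by the Stieltjes--Perron inversion formula, to $\wt\Sigma_j(\d)$ (taking $\d$ with endpoints that are continuity points of both measures, then extending by additivity). This gives $\wt\Sigma_2(\d)-\wt\Sigma_1(\d)=\frac{m(\d)}{\pi}\im\Delta$. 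As the right-hand side is purely absolutely continuous, the (additive) uniqueness of the Lebesgue decomposition \eqref{2.1} splits this into $\wt\Sigma_1^s(\d)=\wt\Sigma_2^s(\d)$ and $\wt\Sigma_2^{ac}(\d)-\wt\Sigma_1^{ac}(\d)=\frac{m(\d)}{\pi}\im\Delta$, which is \eqref{5.5.1}.

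Finally, to reach \eqref{5.5} I would substitute the measure relation back into $\wt F_2-\wt F_1=\Delta$. Using the scalar identity $\int_\bR\bigl(\frac{1}{t-z}-\frac{t}{1+t^2}\bigr)\,dt=i\pi$ for $z\in\bC_+$ (e.g. by evaluating the Poisson integral of the constant density, or directly at $z=i$), the integral term equals $\int_\bR\bigl(\frac{1}{t-z}-\frac{t}{1+t^2}\bigr)\,d(\wt\Sigma_2-\wt\Sigma_1)=i\im\Delta$, so the identity collapses to $(\wt C_2-\wt C_1)+(\wt D_2-\wt D_1)z+i\im\Delta=\re\Delta+i\im\Delta$. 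Thus $(\wt C_2-\wt C_1)+(\wt D_2-\wt D_1)z=\re\Delta$ for all $z\in\bC_+$, and comparing the affine coefficients gives $\wt D_1=\wt D_2$ and $\wt C_2-\wt C_1=\re\wt B_2-\re\wt B_1$, i.e. \eqref{5.5}. I expect the main obstacle to be the rigorous operator-valued Stieltjes inversion: justifying the interchange of the $\d$-integration with the spectral integral (Fubini via positivity of $\wt\Sigma_j$), handling the possible atoms at the endpoints, and verifying dominated convergence in the strong operator sense; once that limit is secured, the reductions before and after it are routine.
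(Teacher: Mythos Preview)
Your argument is correct and shares the same skeleton as the paper's proof: invert \eqref{5.4} to obtain $\wt F_2-\wt F_1=\wt B_2-\wt B_1$ on $\Omega_+$, extend to all of $\bC_+$ by analyticity, and then extract the data from the integral representation. The execution differs in order and in the inversion tool. The paper first reads off $\wt D_1=\wt D_2$ by dividing by $z$ and letting $z\to\infty$, and gets the relation for $\wt C_j-\re\wt B_j$ by evaluating at $z=i$; only then does it take imaginary parts and, working with the scalar measures $\mu_{j,h}(\d)=(\wt\Sigma_j(\d)h,h)$, apply Fatou's theorem pointwise to obtain the derivative identity \eqref{5.9.4}, from which the $ac$-relation follows; the singular-part equality is then recovered by substituting back and invoking uniqueness of the Poisson transform. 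Your route via Stieltjes--Perron inversion on intervals plus uniqueness of the Lebesgue decomposition is a clean alternative that yields the full relation $\wt\Sigma_2-\wt\Sigma_1=\tfrac{m}{\pi}\im\Delta$ in one stroke and splits it immediately, after which \eqref{5.5} falls out of the scalar identity $\int_\bR\bigl(\tfrac{1}{t-z}-\tfrac{t}{1+t^2}\bigr)\,dt=i\pi$. The trade-off is that the paper's pointwise identity \eqref{5.9.4} is reused verbatim in Corollary~\ref{cor5.0}(iii), so with your organization that corollary would require a separate one-line Fatou argument.
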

               \begin{proof}
Starting with \eqref{5.4} and taking inverses we get
  \begin{equation}\label{5.6}
K_1^{-1}(B_1- F_1(z))(K_1^*)^{-1} = K_2^{-1}(B_2- F_2(z))(K_2^*)^{-1}, \quad
z\in \Omega_+.
  \end{equation}
Since $F_1(\cd)$ and $F_2(\cd)$ are Nevanlinna functions, the equality
\eqref{5.6} remains valid for all $z\in \bC_+$ by continuity. Substituting the
integral representations \eqref{5.3} in \eqref{5.6} one obtains
%
\begin{equation}\label{5.7}
   \begin{split}
& \wt C_1-\wt B_1+\wt D_1 z+\int_{\bR}
 \left (\frac{1}{t-z}-\frac{t}{1+t^2}\right )\,d\wt\Sigma_1 \\ 
&=\wt C_2-\wt B_2+\wt D_2 z+\int_{\bR} \left
(\frac{1}{t-z}-\frac{t}{1+t^2}\right )\,d\wt\Sigma_2 , \quad z\in\dC_+.
   \end{split}
\end{equation}
Dividing both sides of \eqref{5.7} by $z$ and passing to the limit as
$z\to\infty$ gives
  \begin {equation}\label{5.8}
 \wt D_1= \wt D_2.
  \end{equation}
Now by taking $z=i$ in \eqref{5.7} leads to
\begin {equation}\label{5.8a}
\wt C_1-\wt B_1+i\int_{\bR}\frac{d\wt\Sigma_1 }{1+t^2}
 =\wt C_2-\wt B_2+i\int_{\bR}\frac{d\wt\Sigma_2 }{1+t^2}.
\end{equation}
Since $\wt C_i=\wt C_j^*, \; j\in \{1,2\},$ the equality \eqref{5.8a} gives $
\wt C_1 - \re \wt B_1=\wt C_2- \re \wt B_2$.

On the other hand, by taking the imaginary parts on both sides of \eqref{5.7}
and subtracting the terms in \eqref{5.8} one obtains for all $y>0$ and
$x\in\bR$,
 \begin{gather}\label{5.9}
-\im \wt B_1+\int_{\bR}\frac{y}{(t-x)^2+y^2}\,d\wt\Sigma_1  = -\im \wt
B_2+\int_{\bR}\frac{y}{(t-x)^2+y^2}\,d\wt\Sigma_2.
   \end{gather}
For a given $h\in\cH$ denote by $\mu_{j,h}$ the scalar measure
\begin{equation}\label{5.9.1}
\mu_{j,h}(\d)=(\wt\Sigma_j(\d)h, h), \quad \d\in\cB_b(\bR), \quad j\in\{1,2\},
\end{equation}
and let $ \mu_{j,h}(t)$ be the corresponding scalar distribution function (see
\eqref{2.2b}). Since $\wt\Sigma_j $ satisfies \eqref{2.2a}, it follows that
\begin {equation}\label{5.9.2}
\int_\bR \frac {d \mu_{j,h}} {1+t^2}<\infty, \quad h\in\cH, \quad j\in\{1,2\}.
\end{equation}
Moreover, \eqref{5.9} implies that for every $h\in\cH$, $y>0$ and $x\in\bR$,
\begin {equation}\label{5.9.3}
-(\im \wt B_1 h,h)+\int_{\bR}\frac{y}{(t-x)^2+y^2}\,d\mu_{1,h}  = -(\im \wt B_2
h,h)+\int_{\bR}\frac{y}{(t-x)^2+y^2}\,d\mu_{2,h}.
\end{equation}
Let $X_{j,h}$ be the set of all $x\in\bR$ for which the derivative $\frac
{d\mu_{j,h}(x)}{dx}$ exists, $j\in\{1,2\}$. Since the measure $\mu_{j,h}$
satisfies \eqref{5.9.2}, we can pass to the limit in \eqref{5.9.3} as
$y\downarrow 0$.  Applying the Fatou theorem, we arrive at the basic equality
  \begin {equation}\label{5.9.4}
-(\im \wt B_1 h,h)+\pi \frac{d\mu_{1,h}(t)} {d t}=-(\im \wt B_2 h,h)+\pi
\frac{d\mu_{2,h}(t)} {d t},
 \end{equation}
which holds for every $h\in\cH$ and $t\in X_{1,h}\cap X_{2,h}$. Let
$\frac{d\mu_{j,h}^{ac}} {d m}(\cd)$ be the derivative of the (absolutely
continuous) measure $\mu_{j,h}^{ac}$ with respect to the Lebesgue measure $m$.
Since
$$
\frac{d\mu_{j,h}^{ac}} {d m}(t)=\frac{d\mu_{j,h}(t)} {d t} \;\; \text{for
a.e.}\;\; t\in\bR, \quad j\in \{1,2\},
$$
it follows from \eqref{5.9.4} that
  \begin {equation}\label{5.9.5}
\mu_{2,h}^{ac}(\d)-\mu_{1,h}^{ac}(\d)=\frac {m(\d)}{\pi} ((\im \wt B_2-\im \wt
B_1)h,h), \quad \d\in \cB_b(\bR), \quad h\in \cH.
  \end{equation}
Combining \eqref{5.9.5} with \eqref{5.9.1} one obtains the second equality in
\eqref{5.5.1}. Substituting this equality in \eqref{5.9} and using the Lebesgue
decomposition $\S_j(\cd)=\S_j^{ac}(\cd)+\S_j^{s}(\cd)$ (see \eqref{2.1}) leads
to
\begin {equation}\label{5.12}
\int_{\bR}\frac{y}{(t-x)^2+y^2}\,d\wt\Sigma_1^s
=\int_{\bR}\frac{y}{(t-x)^2+y^2}\,d\wt\Sigma_2^s.
\end{equation}
Since the operator-valued  measure is uniquely recovered  by its Poisson
transform (for instance, by means of  the Stielties inversion formula), the
first equality in \eqref{5.5.1} follows.
    \end{proof}

  \begin{corollary}\label{cor5.0}
Let the conditions of Lemma \ref{BasicLemma} be satisfied and let $\S_j(\cd)$
be the spectral function of $F_j(\cd)$ (see \eqref{2.2b}). Assume, in addition,
that at least one of the following conditions is fulfilled:

(i) The operator measure $\S_j^{ac}(\cd),\ j\in \{1,2\},$ is not equivalent to
the Lebesgue measure $m(\cd)$ and  equality \eqref{5.4} holds.

(ii) The set
   \begin{equation*}
\Omega_-:=\{\,z\in\Bbb C_-:\
0\in\rho\bigl(B_1-F_1(z)\bigr)\cap\rho\bigl(B_2-F_2(z)\bigr)\}
    \end{equation*}
is not empty and the following equality holds
\begin {equation}\label{5.15}
K_1^*(B_1-F_1(z))^{-1}K_1=K_2^*(B_2-F_2(z))^{-1}K_2, \quad
z\in\Omega:=\Omega_+\cup\Omega_-.
   \end{equation}

(iii) Equality \eqref{5.4} holds  and  for some $t_0\in\Bbb R$ the weak
derivatives  $w$-$\frac{d\Sigma_1}{dt}(t_0)$, $w$-$\frac{d\Sigma_2}{dt}(t_0)$
exist and
    \begin{equation}\label{3.23}
w\text{-}\frac{d\Sigma_1}{dt}(t_0) = w\text{-}\frac{d\Sigma_2}{dt}(t_0).
    \end{equation}
 Then the following  relations are valid
\begin{gather}
\wt\Sigma_1 = \wt\Sigma_2,\quad \im \wt B_1=\im \wt
B_2,\label{5.16}\\
\wt C_1 - \re\wt B_1=\wt C_2- \re \wt B_2, \quad \wt D_1 = \wt
D_2.\label{5.16a}
\end{gather}
     \end{corollary}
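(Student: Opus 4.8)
The plan is to notice first that the pair of relations \eqref{5.16a} is literally \eqref{5.5}, so it is already delivered by Lemma \ref{BasicLemma}; the whole task is thus to strengthen \eqref{5.5.1} to the single equality \eqref{5.16}. Lemma \ref{BasicLemma} already supplies, through \eqref{5.5.1}, both $\wt\S_1^s=\wt\S_2^s$ and $\wt\S_2^{ac}(\d)-\wt\S_1^{ac}(\d)=\tfrac{m(\d)}{\pi}(\im\wt B_2-\im\wt B_1)$. Hence it suffices to prove the operator identity $\im\wt B_1=\im\wt B_2$: granted this, \eqref{5.5.1} forces $\wt\S_2^{ac}=\wt\S_1^{ac}$, and, the singular parts being already equal, we obtain $\wt\S_1=\wt\S_2$, i.e.\ \eqref{5.16}. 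I therefore aim, in each of the three cases, only at $\im\wt B_1=\im\wt B_2$, writing $\Delta:=\im\wt B_2-\im\wt B_1=\Delta^*$ for brevity.

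Case (ii) is the most transparent. Here \eqref{5.15} holds on $\Omega_+\cup\Omega_-$ and, since each $F_j$ is holomorphic on $\bC_-$ as well, the inverted identity \eqref{5.6} extends by continuity to all of $\bC_-$. Repeating the computation that led to \eqref{5.9}, but now with $z=x+iy$, $y<0$, the sign of the Poisson kernel $\im(t-z)^{-1}$ is reversed, so for $y>0$ one obtains
\begin{equation*}
-\im\wt B_1-\int_\bR\frac{y}{(t-x)^2+y^2}\,d\wt\S_1=-\im\wt B_2-\int_\bR\frac{y}{(t-x)^2+y^2}\,d\wt\S_2.
\end{equation*}
Adding this to \eqref{5.9} the two Poisson integrals cancel, leaving $-2\im\wt B_1=-2\im\wt B_2$, that is $\Delta=0$.

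Case (i) is read off from the density form of \eqref{5.9.5}. Since $K_j^{-1}$ is boundedly invertible we have $\wt\S_j^{ac}\sim\S_j^{ac}$, so the hypothesis $\S_j^{ac}\not\sim m$ transfers to $\wt\S_j^{ac}$; for an operator measure this means exactly that the Radon--Nikodym density $\tfrac{d\wt\S_j^{ac}}{dm}(\cd)$ vanishes, as an operator, on a set of positive Lebesgue measure. Differentiating \eqref{5.9.5} with respect to $m$ gives
\begin{equation*}
\frac{d\mu_{2,h}^{ac}}{dm}(t)-\frac{d\mu_{1,h}^{ac}}{dm}(t)=\frac{(\Delta h,h)}{\pi}\qquad\text{for a.e. }t,\ h\in\cH.
\end{equation*}
On the set where the density of $\wt\S_1^{ac}$ vanishes the first term drops out, and the surviving nonnegative density forces $(\Delta h,h)\ge0$; applying the same reasoning with the indices exchanged (now using $\S_2^{ac}\not\sim m$) yields $(\Delta h,h)\le0$. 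Hence $\Delta=0$.

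Finally, in case (iii) the idea is to localise the a.e.\ identity \eqref{5.9.4} to the single point $t_0$. The existence of the weak derivatives $w\text{-}\tfrac{d\S_j}{dt}(t_0)$ ensures that every scalar distribution function $\mu_{j,h}(\cd)$ of \eqref{5.9.1} is differentiable at $t_0$, so $t_0\in X_{1,h}\cap X_{2,h}$ for all $h$ and \eqref{5.9.4} holds at $t=t_0$; the coincidence \eqref{3.23} then makes the corresponding derivatives $\tfrac{d\mu_{1,h}}{dt}(t_0)$ and $\tfrac{d\mu_{2,h}}{dt}(t_0)$ agree, whence $(\Delta h,h)=0$ for every $h$, i.e.\ $\Delta=0$. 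In all three cases $\wt\S_1=\wt\S_2$ then follows by the reduction of the first paragraph, completing the proof. I expect the genuine subtleties to lie in case (i)---both in reading ``$\S_j^{ac}\not\sim m$'' correctly at the level of operator densities and in the fact that a two-sided estimate on $\Delta$, hence the hypothesis for \emph{both} indices $j\in\{1,2\}$, is really needed---and, more delicately, in case (iii), where the passage from \eqref{3.23} (stated for $\S_j$) to the equality of the pointwise derivatives of $\mu_{1,h}$ and $\mu_{2,h}$ is the point at which the interplay with the conjugating operators $K_j$ must be handled with care; the Poisson-kernel bookkeeping of case (ii) is by contrast routine.
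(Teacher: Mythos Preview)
Your proof is correct and follows essentially the same strategy as the paper's: reduce everything to the single identity $\im\wt B_1=\im\wt B_2$ and verify it separately in each of the three cases. The only noteworthy difference is in case~(i), where you pass to Radon--Nikodym densities, whereas the paper simply evaluates the second identity in \eqref{5.5.1} directly at bounded Borel sets $\delta_j$ with $m(\delta_j)>0$ and $\Sigma_j^{ac}(\delta_j)=0$ to obtain the two-sided estimate $\Delta\le 0\le\Delta$---this avoids the detour through densities but is otherwise the same sandwich argument you give.
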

  \begin{proof}
First observe that according to Lemma \ref{BasicLemma} each of the assumptions
(i), (ii), (iii) imply  relations \eqref{5.5.1} and \eqref{5.5}.

 (i) Since the measure $\Sigma_j^{ac}$ is
not equivalent to the Lebesgue measure $m(\cd)$, there exists a bounded Borel
set $\delta_j$ of positive Lebesgue measure $m (\delta_j)>0$, such that
$\Sigma_j^{ac}(\delta_j)=0$; $j\in\{1,2\}$. Substituting these sets  in the
second equality in  \eqref{5.5.1} we get
  \begin{equation*}
\im \wt B_2-\im\wt B_1=-\frac \pi {m (\d_2)}\wt\S_1^{ac}(\d_2)\leq 0,\quad
 \im \wt B_2-\im\wt B_1=\frac \pi {m
(\d_1)}\wt\S_2^{ac}(\d_1)\geq 0.
   \end{equation*}
Hence  $\im \wt B_2=\im \wt B_1$, and the second equality in \eqref{5.5.1}
yields  $\wt\Sigma_1^{ac} = \wt\Sigma_2^{ac}$. Combining this equality with the
first equality in \eqref{5.5.1} yields  $\wt\Sigma_1 = \wt\Sigma_2$.

  (ii)  Starting with equality \eqref{5.15} for
$z\in\Omega_-$ and repeating the reasonings of the proof of Lemma
\ref{BasicLemma}, we arrive at the equality
\begin{gather*}
-\im \wt B_1+\int_{\bR}\frac{y}{(t-x)^2+y^2}\,d\wt\Sigma_1  = -\im \wt
B_2+\int_{\bR}\frac{y}{(t-x)^2+y^2}\,d\wt\Sigma_2,
   \end{gather*}
with  $y<0$ and  $x\in\bR$.  Therefore for any $y>0$ and $x\in\bR$ one has
   \begin{gather*}
-\im \wt B_1-\int_{\bR}\frac{y}{(t-x)^2+y^2}\,d\wt\Sigma_1 = -\im \wt
B_2-\int_{\bR}\frac{y}{(t-x)^2+y^2}\,d\wt\Sigma_2.
   \end{gather*}
Combining this equality  with \eqref{5.9} and using the uniqueness of the
Poisson transform, we arrive at  \eqref{5.16}.

(iii) As it was shown in the proof of Lemma \ref{BasicLemma}  equality
 \eqref{5.4} implies the basic equality \eqref{5.9.4}. Since $t_0\in X_{1,h}
 \cap X_{2,h}$ for all $h\in\cH$, we get  from \eqref{5.9.4}
   \begin {equation}\label{3.24}
-(\im \wt B_1 h,h)+\pi \frac{d\mu_{1,h}(t_0)} {d t}=-(\im \wt B_2 h,h)+\pi
\frac{d\mu_{2,h}(t_0)} {d t},\quad h\in\cH.
 \end{equation}
Combining \eqref{3.24} with \eqref{3.23} one obtains $\im \wt B_1=\im \wt B_2$.
Inserting   this equality in \eqref{5.5.1}, yields  the first equality in
\eqref{5.16}.
    \end{proof}
%
      \begin{remark}\label{rem5.2a}
Corollary \ref{cor5.0}(iii) demonstrates at the same time  that the condition
(i) of this corollary  is not necessary for the validity of the statement. In
fact, the conclusion of Corollary \ref{cor5.0} substantially depends on the
measures $\Sigma_j$ themselves, not only on their spectral types. We emphasize
that the assumptions (iii) of Corollary \ref{cor5.0}, hence the conclusions,
can be satisfied even if the operator measures $\Sigma^{ac}_j,\ j\in\{1,2\},$
are spectrally equivalent to the operator Lebesgue measure $m_{\cH} :=
I_{\cH}m$ in the sense of \cite[Definition 4.8]{MalMal03}.

Indeed, let $t_0$ be a common point for which the weak derivatives
$\frac{d\Sigma_j}{dt}(t_0)$, $j\in \{1,2\}$, exist. Choose a scalar function
$\varphi(\cdot)\in C^1(\Bbb R)$ satisfying
    \begin{equation*}
\varphi(t_0)=0,  \quad  \varphi(t)>0, \quad t\in\Bbb R\setminus\{t_0\} \quad
\text{and}\quad  \varphi(t)=1,\quad t\in\Bbb R\setminus (t_0-1, t_0+1)
    \end{equation*}
and define the measure $\Sigma_{j,\varphi}$ by setting
     \begin{equation}\label{3.25}
\Sigma_{j,\varphi}(\delta) := \int_{\delta}\varphi(t)d\Sigma_j(t), \quad \delta
\in \cB_b(\bR),\quad  j\in\{1,2\} .
    \end{equation}
Then the weak derivative  $w$-$\frac{d\Sigma_j,\varphi}{dt}(t_0),\ j\in
\{1,2\},$ exists and equals zero since for every $h\in\cH$
   \begin{equation}\label{3.26}
\frac{\bigl(d\Sigma_{j,\varphi}(t)h,h\bigr)}{dt}|_{t=t_0}=
\varphi(t_0)\frac{\bigl(d\Sigma_{j}(t)h,h\bigr)}{dt}|_{t=t_0} = 0,\quad j\in
\{1,2\}.
   \end{equation}
%
%

If $\Sigma^{ac}_j,\ j\in\{1,2\},$ are spectrally equivalent to the measure
$m_{\cH}$, then clearly $\Sigma_{j,\varphi}^{ac}\sim m_{\cH}$.  Moreover, their
multiplicity functions coincide,
   \begin{equation}
N_{\Sigma_{j,\varphi}^{ac}}(t) = N_{m_{\cH}}(t)=\dim\cH, \quad t\in\Bbb
R\setminus\{t_0\}.
  \end{equation}
Hence the operator measures $\Sigma_{j,\varphi}^{ac}$ and $m_{\cH}$ are also
spectrally equivalent.

Now let $F_j$, $j\in\{1,2\},$ be $R[\cH]$-functions as in \eqref{5.3} and let
$\Sigma_j$ be the spectral measure of $F_j$, $j\in\{1,2\}$. Let $K_1=K_2$ and
let ${B}_j\in [\cH]$, $j\in\{1,2\}$, be such that the equation \eqref{5.4} is
satisfied, while $F_1\not = F_2$; for instance, one can take $F_2(z)=F_1(z) \pm
i I$ with $z\in\dC_\pm$ and put $B_1=0$, $B_2=iI$. Here the operator measures
$\Sigma^{ac}_j,\ j\in\{1,2\}$ can be taken to be spectrally equivalent to the
measure $m_{\cH}$. In this case, the assumptions and the conclusions of
Corollary \ref{cor5.0} fail to hold, while the assumptions in
Lemma~\ref{BasicLemma} are fulfilled.

We now slightly modify the previous situation by replacing the operator
measures $\Sigma_j$ by $\Sigma_{j,\varphi}$ defined in \eqref{3.25} and define
the $R[\cH]$-functions $F_{j,\varphi}$ according to \eqref{5.3} with
$\Sigma_{j,\varphi}$ instead of $\Sigma_j,\ j\in\{1,2\}$. If now for some $\wt
B_j\in[\cH]$, $j\in\{1,2\}$, the equality \eqref{5.4} with $F_{j,\varphi}$
(instead of $F_j$, $B_j$) holds, then due to \eqref{3.26}, and in contrary to
the previous case, we have
 $F_{1,\varphi}\equiv F_{2,\varphi}$ and
 $\Sigma_{1,\varphi}(t) = \Sigma_{2,\varphi}(t),\ t\in{\Bbb R}$.
     \end{remark}

\subsection{Sufficient conditions for unitary similarity of ordinary boundary triplets}
The results in the previous subsection are now applied to establish the unitary
equivalence of proper extensions of $A$ as well as the unitary equivalence of
appropriate boundary triplets for $A^*$.

Combining  Lemma \ref{BasicLemma} with Proposition  \ref {pr2.5} yields the
following statement.
    \begin{proposition}\label{pr5.1}
Let  $A^{(j)}$ be a simple  symmetric operator in the separable Hilbert space
$\gH_j$, let $\Pi_j=\{\cH_j,\G_0^{(j)},\G_1^{(j)} \}$ be  a boundary triplet
for $A^{(j)*}$ and let $M_j(\cd)$ be the corresponding Weyl function with the
integral representation
   \begin {equation}\label{5.3A}
M_j(z)=C_j+D_j z+\int_{\bR} \left
(\frac{1}{t-z}-\frac{t}{1+t^2}\right)\,d\Sigma_j ,\quad j\in \{1,2\},
    \end{equation}
%
%
(see \eqref{2.2}). Moreover, let $\cH$ be a Hilbert space and let $B_j\in
[\cH_j]$ and $K_j\in [\cH,\cH_j],\; j\in \{1,2\},$ be operators such that
$0\in\rho(K_1)\cap\rho(K_2)$ and
   \begin {equation*}
 \Omega_+:=\rho(A^{(1)}_{B_1})\cap\rho(A^{(2)}_{B_2})\cap
 \bC_+\neq\emptyset,
  \end{equation*}
where $A^{(j)}_{B_j} (\in Ext_{A^{(j)}})$ is given by \eqref{2.4}
with the boundary operator $B_j,\  j\in \{1,2\}.$ 
Then the equality
  \begin{equation}\label{5.4A}
K_1^*(B_1-M_1(z))^{-1}K_1=K_2^*(B_2-M_2(z))^{-1}K_2, \quad z\in \Omega_+,
   \end{equation}
yields   the  identities \eqref{5.5.1} and \eqref{5.5}, where the operator
measure $\wt\Sigma_j(\cd):\cB_b(\bR)\to [\cH]$  and the operators  $ \wt
C_j,\;\wt D_j \;\wt B_j\in [\cH]$ are defined by \eqref{5.5A} and \eqref{5.5B}.
    \end{proposition}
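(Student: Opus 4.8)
The plan is to read this Proposition as nothing more than a translation of Lemma~\ref{BasicLemma} into the language of boundary triplets, taking $F_j = M_j$, $\cH_j$ for the target space, and keeping the same operators $B_j$, $K_j$. All the analytic content—the passage from the resolvent identity to the identities \eqref{5.5.1} and \eqref{5.5}—has already been carried out in Lemma~\ref{BasicLemma}. The only genuine task here is to check that every hypothesis of that lemma is met in the present setting, and in particular to reconcile the two different descriptions of the set $\Omega_+$: the one in the Proposition is phrased via the resolvent sets $\rho(A^{(j)}_{B_j})$, whereas the one in the lemma is phrased via invertibility of $B_j - F_j(z)$.

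First I would record that the Weyl function $M_j(\cdot)$ is an $R[\cH_j]$-function (in fact it lies in $R^u[\cH_j]\subset R[\cH_j]$, as noted after Definition~\ref{Weylfunc}), so that $M_j$ is a legitimate choice of $F_j$ and its integral representation \eqref{5.3A} is exactly of the form \eqref{5.3}, with the same parameters $C_j, D_j, \Sigma_j$. Next, since $A^{(j)}_0 := \ker\G_0^{(j)}$ is self-adjoint, its spectrum is real, whence $\bC_+\subset\rho(A^{(j)}_0)$; consequently $M_j(\cdot)$ is holomorphic on all of $\bC_+$ and Proposition~\ref{pr2.5} is applicable at every point $z\in\bC_+$.

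The central step is then the set identification. Applying Proposition~\ref{pr2.5} with $\Theta = B_j$ and $A_\Theta = A^{(j)}_{B_j}$, for every $z\in\bC_+$ one has the equivalence $z\in\rho(A^{(j)}_{B_j})\iff 0\in\rho(B_j - M_j(z))$, $j\in\{1,2\}$. Intersecting over $j\in\{1,2\}$ and over $\bC_+$ shows that the set $\Omega_+ = \rho(A^{(1)}_{B_1})\cap\rho(A^{(2)}_{B_2})\cap\bC_+$ appearing in the Proposition coincides with the set $\{z\in\bC_+ : 0\in\rho(B_1 - M_1(z))\cap\rho(B_2 - M_2(z))\}$ figuring in Lemma~\ref{BasicLemma}. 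Hence the standing assumption $\Omega_+\neq\emptyset$ of the Proposition is literally the nonemptiness hypothesis of the lemma.

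With the two descriptions of $\Omega_+$ identified, equality \eqref{5.4A} is precisely equality \eqref{5.4} with $F_j$ replaced by $M_j$, and every remaining hypothesis of Lemma~\ref{BasicLemma}—the assumptions $0\in\rho(K_1)\cap\rho(K_2)$ and $B_j\in[\cH_j]$, $K_j\in[\cH,\cH_j]$, together with the definitions \eqref{5.5A}, \eqref{5.5B} of $\wt\Sigma_j$, $\wt C_j$, $\wt D_j$, $\wt B_j$—is already built into the statement. Invoking Lemma~\ref{BasicLemma} therefore yields at once the identities \eqref{5.5.1} and \eqref{5.5}, which completes the argument. I do not expect any real obstacle here: the proof is purely a matter of verifying hypotheses, and the one nontrivial verification (the equality of the two $\Omega_+$) is handled directly by Proposition~\ref{pr2.5} once the self-adjointness of $A^{(j)}_0$ is used to place $\bC_+$ inside $\rho(A^{(j)}_0)$.
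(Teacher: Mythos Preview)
Your proof is correct and follows exactly the approach indicated by the paper, which simply states that the proposition is obtained by ``combining Lemma~\ref{BasicLemma} with Proposition~\ref{pr2.5}''. You have merely made explicit the two verifications the paper leaves implicit: that $M_j\in R[\cH_j]$ so it can serve as $F_j$, and that Proposition~\ref{pr2.5} (together with $\bC_+\subset\rho(A_0^{(j)})$) identifies the two descriptions of $\Omega_+$.
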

Combining Corollary \ref{cor5.0} with Proposition \ref{pr2.4.1} gives the
following result.
   \begin{proposition}\label{pr5.2}
Let the conditions of Proposition \ref{pr5.1} be satisfied, let $A_0^{(j)}=\ker
\G_0^{(j)}$, and let $E_j(\cd)$ be the spectral measure of $A_0^{(j)}, \;
j\in\{1,2\}$. Assume, in addition, that at least one of the following
assumptions is fulfilled:

(a1) Equality \eqref{5.4A} holds,  $\mul A_0^{(j)}\neq\gH_j, \; j\in \{1,2\},$
and  the spectral measure $E_j^{ac}(\cd),\ j\in \{1,2\},$ is not equivalent to
the Lebesgue measure $m_\cH(\cd)$.

(a2) The set
\begin {equation*}
 \Omega_-:=\rho(A^{(1)}_{B_1})\cap \rho(A^{(2)}_{B_2}) \cap \bC_-
\end{equation*}
is not empty and the following equality holds
  \begin {equation*}
K_1^*(B_1-M_1(z))^{-1}K_1=K_2^*(B_2-M_2(z))^{-1}K_2, \quad
z\in\Omega:=\Omega_+\cup\Omega_-.
   \end{equation*}
 Then the  relations \eqref{5.16} and \eqref{5.16a} are satisfied.
   \end{proposition}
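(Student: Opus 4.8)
The plan is to show that each of the two hypotheses (a1), (a2) is merely a restatement, in extension-theoretic language, of one of the conditions (i)--(ii) of Corollary \ref{cor5.0} applied with $F_j(\cd) = M_j(\cd)$, $j\in\{1,2\}$; once the matching is established, the relations \eqref{5.16} and \eqref{5.16a} are immediate from that corollary.

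First I would record that the standing hypotheses of Proposition \ref{pr5.1} are exactly those of Lemma \ref{BasicLemma} and Corollary \ref{cor5.0} with $F_j = M_j$: each $M_j(\cd)\in R[\cH_j]$ has the integral representation \eqref{5.3A}, the operators $B_j\in[\cH_j]$, $K_j\in[\cH,\cH_j]$ satisfy $0\in\rho(K_1)\cap\rho(K_2)$, and $\Omega_+\neq\emptyset$. The crucial dictionary is Proposition \ref{pr2.5}: for $z\in\rho(A_0^{(j)})$ one has $z\in\rho(A^{(j)}_{B_j})\iff 0\in\rho(B_j-M_j(z))$. Consequently the set $\Omega_+=\rho(A^{(1)}_{B_1})\cap\rho(A^{(2)}_{B_2})\cap\bC_+$ of Proposition \ref{pr5.1} coincides with the set $\Omega_+$ of Lemma \ref{BasicLemma}, and \eqref{5.4A} is the equality \eqref{5.4} of the lemma. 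Thus the conclusions \eqref{5.5.1} and \eqref{5.5} of Proposition \ref{pr5.1} are available unconditionally.

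Under assumption (a1) I would invoke Proposition \ref{pr2.4.1}: since each $A^{(j)}$ is simple and $\mul A_0^{(j)}\neq\gH_j$, the spectral measure $E_j(\cd)$ of $A_0^{(j)}$ is equivalent to the spectral measure $\Sigma_j(\cd)$ of $M_j(\cd)$, whence $E_j^{ac}\sim\Sigma_j^{ac}$. Because $m_\cH=I_\cH m$ has precisely the null sets of the scalar $m$, equivalence to $m_\cH$ agrees with equivalence to $m$ in the sense used in Corollary \ref{cor5.0}(i); therefore the assumed non-equivalence $E_j^{ac}\not\sim m_\cH$ transfers to $\Sigma_j^{ac}$, i.e. $\Sigma_j^{ac}$ is not equivalent to $m$. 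Together with \eqref{5.4A} this is condition (i) of Corollary \ref{cor5.0}, which yields \eqref{5.16} and \eqref{5.16a}.

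Under assumption (a2), applying Proposition \ref{pr2.5} once more identifies $\Omega_-=\rho(A^{(1)}_{B_1})\cap\rho(A^{(2)}_{B_2})\cap\bC_-$ with $\{z\in\bC_-:\,0\in\rho(B_1-M_1(z))\cap\rho(B_2-M_2(z))\}$; hence the nonemptiness of $\Omega_-$ together with the posited equality on $\Omega=\Omega_+\cup\Omega_-$ is exactly condition (ii) of Corollary \ref{cor5.0}, again delivering \eqref{5.16} and \eqref{5.16a}. The only step requiring genuine care is the measure-equivalence transfer in the (a1) case: one must check the applicability of Proposition \ref{pr2.4.1} (which is where the hypothesis $\mul A_0^{(j)}\neq\gH_j$ is used) and verify that equivalence to $m_\cH$ is the same condition on null sets as equivalence to the scalar $m$. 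Everything else is a direct translation through Proposition \ref{pr2.5}.
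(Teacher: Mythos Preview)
Your argument is correct and follows exactly the route indicated in the paper: the proposition is obtained by combining Corollary \ref{cor5.0} with Proposition \ref{pr2.4.1} (and, implicitly, the dictionary from Proposition \ref{pr2.5} already used in Proposition \ref{pr5.1}). Your write-up simply makes explicit the translation of (a1) into condition (i) and of (a2) into condition (ii) of Corollary \ref{cor5.0}, which is precisely what the one-line proof in the paper leaves to the reader.
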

   \begin{corollary}\label{cor5.2a}
Let the conditions of Proposition \ref{pr5.1} be satisfied, let $A_0^{(j)}=\ker
\G_0^{(j)}$, $j\in\{1,2\}$, and let the following assumption be fulfilled:

(a3) Equality \eqref{5.4A} holds,  $\mul A_0^{(j)}\neq\gH_j,$ and
$\s_{ac}(A_0^{(j)})\neq \bR,\; j\in\{1,2\}$.

Then the  relations \eqref{5.16} and \eqref{5.16a} are valid.
  \end{corollary}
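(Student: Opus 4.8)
The plan is to reduce Corollary \ref{cor5.2a} to part (i) of Corollary \ref{cor5.0} via Proposition \ref{pr2.4.1}. The key observation is that the hypothesis $\s_{ac}(A_0^{(j)})\neq\bR$ is precisely what is needed to verify that the spectral measure $E_j^{ac}(\cd)$ is not equivalent to the Lebesgue measure $m_{\cH}(\cd)$, which is the hypothesis (a1) of Proposition \ref{pr5.2} (equivalently condition (i) of Corollary \ref{cor5.0} applied to the Weyl functions $M_j$). Once this equivalence of hypotheses is established, the conclusion --- relations \eqref{5.16} and \eqref{5.16a} --- follows immediately from Proposition \ref{pr5.2}.

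First I would unwind the definitions. By Proposition \ref{pr2.4.1}, since $A^{(j)}$ is simple and $\mul A_0^{(j)}\neq\gH_j$, the spectral measure $E_j(\cd)$ of $A_0^{(j)}$ and the spectral measure $\S_j(\cd)$ of the Weyl function $M_j(\cd)$ are equivalent, and in particular $E_j^{ac}\sim\S_j^{ac}$. Thus the condition ``$E_j^{ac}(\cd)$ is not equivalent to $m_{\cH}(\cd)$'' in (a1) is equivalent to ``$\S_j^{ac}(\cd)$ is not equivalent to $m_{\cH}(\cd)$'', which is exactly hypothesis (i) of Corollary \ref{cor5.0}.

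Next I would show that $\s_{ac}(A_0^{(j)})\neq\bR$ implies $E_j^{ac}(\cd)\not\sim m_{\cH}(\cd)$, which is the only genuinely new point. By Definition \ref{def2.1.2} the $ac$-spectrum $\s_{ac}(A_0^{(j)})$ is $\s_{ac}$ of the operator part of $A_0^{(j)}$, i.e.\ the topological support of $E_j^{ac}(\cd)$. If $\s_{ac}(A_0^{(j)})\neq\bR$, then there is a nonempty open interval $\Delta\subset\bR$ disjoint from the support, so $E_j^{ac}(\Delta)=0$ while $m_{\cH}(\Delta)=I_{\cH}\,m(\Delta)\neq 0$ because $m(\Delta)>0$. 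Hence $m_{\cH}\not\prec E_j^{ac}$, so $E_j^{ac}\not\sim m_{\cH}$, as required.

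Assembling the pieces: the hypothesis (a3) (equality \eqref{5.4A} together with $\mul A_0^{(j)}\neq\gH_j$ and $\s_{ac}(A_0^{(j)})\neq\bR$) implies hypothesis (a1) of Proposition \ref{pr5.2} by the two observations above, and therefore the conclusions \eqref{5.16} and \eqref{5.16a} hold. The main (and in fact only) obstacle is the identification in the previous paragraph, namely confirming that ``$\s_{ac}\neq\bR$'' translates into the measure-theoretic non-equivalence used in Corollary \ref{cor5.0}(i); everything else is a direct invocation of the already-established Propositions \ref{pr2.4.1} and \ref{pr5.2}. One should note that the converse implication need not hold: $E_j^{ac}$ may fail to be equivalent to $m_{\cH}$ even when $\s_{ac}(A_0^{(j)})=\bR$ (for instance if the $ac$-spectrum has full support but the spectral density vanishes on a set of positive measure), so (a3) is strictly stronger than (a1) and the corollary is a convenient spectral-support sufficient condition rather than a reformulation.
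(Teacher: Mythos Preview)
Your proposal is correct and matches the paper's intended argument: the paper states Corollary \ref{cor5.2a} without proof immediately after Proposition \ref{pr5.2}, treating it as evident that assumption (a3) implies assumption (a1). Your explicit verification that $\s_{ac}(A_0^{(j)})\neq\bR$ forces $E_j^{ac}\not\sim m_\cH$ (via a gap interval $\Delta$ with $E_j^{ac}(\Delta)=0$ but $m(\Delta)>0$) is exactly the missing step, and your closing remark that (a3) is strictly stronger than (a1) is accurate and worth noting.
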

The following statement is immediate from Corollary \ref{cor5.0}, (iii).
   \begin{proposition}\label{pr5.2b}
Let the conditions of Proposition \ref{pr5.1} be satisfied. Assume in addition
that the following assumption is fulfilled:

(a4) Equality \eqref{5.4A} holds and for some $t_0\in\bR$ the weak derivatives
$w\text{-}\frac{d\Sigma_1}{dt}(t_0)$, $w\text{-}\frac{d\Sigma_2}{dt}(t_0)$
exist and are equal. Then the relations \eqref{5.16} and \eqref{5.16a} are
valid.
  \end{proposition}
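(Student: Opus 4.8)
The plan is to recognize that hypothesis (a4) reproduces verbatim the hypothesis of Corollary \ref{cor5.0}(iii), so that the proof amounts to checking that the setting of Proposition \ref{pr5.1} is a specialization of the abstract setting of Lemma \ref{BasicLemma}. First I would set $F_j := M_j$ for $j\in\{1,2\}$. Since each $M_j(\cd)$ is the Weyl function of an ordinary boundary triplet for $A^{(j)*}$, it belongs to $R[\cH_j]$ (indeed to $R^u[\cH]$), and it admits the integral representation \eqref{5.3A}, which is precisely \eqref{5.3}. Thus the $\Sigma_j,C_j,D_j$ occurring in Proposition \ref{pr5.1} are exactly the spectral measure and parameters required in Lemma \ref{BasicLemma}, and the operators $B_j\in[\cH_j]$, $K_j\in[\cH,\cH_j]$ with $0\in\rho(K_1)\cap\rho(K_2)$ are given directly by the hypotheses.

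The one point that needs a word is that the set $\Omega_+$ of Proposition \ref{pr5.1}, namely $\rho(A^{(1)}_{B_1})\cap\rho(A^{(2)}_{B_2})\cap\bC_+$, coincides with the set $\Omega_+$ appearing in Lemma \ref{BasicLemma} for this choice of $F_j$. This is immediate from Proposition \ref{pr2.5}: for the operator boundary condition $\Theta=B_j$ one has, for every $z\in\rho(A_0^{(j)})$, the equivalence $z\in\rho(A^{(j)}_{B_j})\Llr 0\in\rho\bigl(B_j-M_j(z)\bigr)$. Intersecting over $j\in\{1,2\}$ and restricting to $\bC_+$ shows that the two descriptions of $\Omega_+$ agree, so the nonemptiness assumed in Proposition \ref{pr5.1} is exactly the nonemptiness required in Lemma \ref{BasicLemma}. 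Hence all the hypotheses of Lemma \ref{BasicLemma} are in force with $F_j=M_j$.

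With this identification, equality \eqref{5.4A} is literally equality \eqref{5.4} with $F_j$ replaced by $M_j$, and the operators $\wt\Sigma_j,\wt C_j,\wt D_j,\wt B_j$ defined in \eqref{5.5A}, \eqref{5.5B} are unchanged. Assumption (a4) then states precisely that \eqref{5.4} holds and that for some $t_0\in\bR$ the weak derivatives $w\text{-}\frac{d\Sigma_1}{dt}(t_0)$, $w\text{-}\frac{d\Sigma_2}{dt}(t_0)$ exist and satisfy \eqref{3.23}. These are exactly the hypotheses of Corollary \ref{cor5.0}(iii), so that corollary applies and yields \eqref{5.16} and \eqref{5.16a}. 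There is essentially no obstacle here; the only step requiring care is the bookkeeping identification of the two sets $\Omega_+$ via Proposition \ref{pr2.5}, which is already implicit in the formulation of Proposition \ref{pr5.1}.
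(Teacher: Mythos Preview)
Your proposal is correct and follows the same route as the paper, which simply states that the proposition ``is immediate from Corollary \ref{cor5.0}(iii).'' You have merely spelled out the identification $F_j=M_j$ and the coincidence of the two sets $\Omega_+$ via Proposition \ref{pr2.5}, which the paper leaves implicit (having already used it in deriving Proposition \ref{pr5.1}).
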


From Propositions \ref{pr5.2} and \ref{pr5.2b}   one can derive some general
sufficient conditions on the unitary equivalence of certain proper extensions.
However, we can deduce a more general result involving auxiliary boundary
triplets. For this purpose the following simple lemma, which is immediate from
\cite{DM95}, is needed.
   \begin{lemma}\label{lem5.3}
Let $\Pi=\{\cK,\G_0,\G_1\}$ be a boundary triplet for $A^*$ and let $M(\cd)$ be
the corresponding Weyl function. Moreover, let $\cH $ be a Hilbert space and
let $B\in [\cK]$ and $K\in [\cH,\cK]$ with $0\in\rho (K)$. Then the transform
   \begin{equation}\label{hatbt}
\wh\Gamma_0=K^* \Gamma_0,\quad
 \wh\Gamma_1=K^{-1} (\Gamma_1-(\re\, B)\Gamma_0),
   \end{equation}
defines an ordinary boundary triplet $\wh\Pi_{K,B}:= \{\cH,\wh\G_0,\wh \G_1\}$
for $A^*$ such that $\wh A_0(= \ker \wh\G_0 )=A_0$, and the corresponding Weyl
function is given by
  \begin{equation}\label{hatweyl}
\wh M(\cdot) = K^{-1}(M(\cdot) - \re \,B)K^{-1*}.
  \end{equation}
Moreover, if $T\in [\cK]$ is a boundary operator (coordinate) of $\wt A\in\exa$
in the triplet $\Pi$ (i.e. $\wt A=A_T$, see \eqref{2.4}), then the boundary
operator  of $\wt A$ in the triplet $\wh \Pi_{K,B}$ is
\begin {equation}\label{5.20}
 \wh T=K^{-1}(T-\re \,B)K^{-1*}.
\end{equation}
  \end{lemma}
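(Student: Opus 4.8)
The plan is to verify directly that $\wh\Pi_{K,B}$ satisfies Definition \ref{def2.2}, treating the claim as a routine transformation of boundary triplets in the spirit of \cite{DM95}. First I would rewrite the transform \eqref{hatbt} in the compact block form
\begin{equation*}
\begin{pmatrix}\wh\G_0\\ \wh\G_1\end{pmatrix}=N\begin{pmatrix}\G_0\\ \G_1\end{pmatrix},\qquad N:=\begin{pmatrix}K^* & 0\\ -K^{-1}(\re B) & K^{-1}\end{pmatrix}\in[\cK\oplus\cK,\cH\oplus\cH],
\end{equation*}
and observe that $N$ is boundedly invertible, its diagonal blocks $K^*$ and $K^{-1}$ being invertible (here $0\in\rho(K)$ forces $0\in\rho(K^*)\cap\rho(K^{-1})$). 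Since $\G=(\G_0\;\;\G_1)^\top$ is surjective onto $\cK\oplus\cK$, the composition $\wh\G=N\G$ is surjective onto $\cH\oplus\cH$. The identity $\ker\wh\G_0=\ker(K^*\G_0)=\ker\G_0=A_0$ is then immediate from the invertibility of $K^*$.

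The first substantive check is the abstract Green's identity. I would insert \eqref{hatbt} into $(\wh\G_1\wh f,\wh\G_0\wh g)_\cH-(\wh\G_0\wh f,\wh\G_1\wh g)_\cH$ and transfer the factors $K^{\pm1},K^{*}$ across the inner products using the adjoint relations $(w,K^*v)_\cH=(Kw,v)_\cK$ and $(K^*p,K^{-1}q)_\cH=(p,q)_\cK$; the copies of $K$ cancel pairwise, and the expression collapses to $(\G_1\wh f,\G_0\wh g)_\cK-(\G_0\wh f,\G_1\wh g)_\cK$ together with the extra terms generated by $\re B$. The decisive point is that $\re B$ is self-adjoint, so these extra terms are $-((\re B)\G_0\wh f,\G_0\wh g)_\cK+(\G_0\wh f,(\re B)\G_0\wh g)_\cK=0$; what remains equals $(f',g)-(f,g')$ by \eqref{2.3} for $\Pi$. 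This yields \eqref{2.3} for $\wh\Pi_{K,B}$ and is the one place where the appearance of $\re B$ rather than $B$ itself is essential—this is the main, if modest, obstacle, since for a non-self-adjoint $B$ the cross terms would not cancel.

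To obtain the Weyl function \eqref{hatweyl}, I would evaluate the defining relation on $\wh\gN_z(A)$. For $\wh f_z=\{f_z,zf_z\}$ one has $\G_1\wh f_z=M(z)\G_0\wh f_z$, whence
\begin{equation*}
\wh\G_1\wh f_z=K^{-1}(M(z)-\re B)\G_0\wh f_z=K^{-1}(M(z)-\re B)K^{-1*}\,\wh\G_0\wh f_z,
\end{equation*}
using $\G_0\wh f_z=K^{-1*}\wh\G_0\wh f_z$; comparison with $\wh\G_1\wh f_z=\wh M(z)\wh\G_0\wh f_z$ gives \eqref{hatweyl}, and since $\wh A_0=A_0$ the domain $\rho(A_0)$ of $\wh M(\cd)$ is unchanged. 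Finally, for the boundary-operator statement I would repeat this computation on $\wh f\in A_T=\ker(\G_1-T\G_0)$: there $\G_1\wh f=T\G_0\wh f$, so $\wh\G_1\wh f=K^{-1}(T-\re B)K^{-1*}\wh\G_0\wh f$, showing $A_T=\ker(\wh\G_1-\wh T\wh\G_0)$ with $\wh T=K^{-1}(T-\re B)K^{-1*}$ as in \eqref{5.20}; uniqueness of the coordinate is guaranteed by the bijection of Proposition \ref{pr2.4}.
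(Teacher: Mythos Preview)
Your verification is correct and complete: the block-matrix rewriting, the surjectivity argument via invertibility of $N$, the cancellation of the $\re B$-terms in Green's identity (which is indeed the only place where self-adjointness of the shift matters), and the computations of $\wh M(\cdot)$ and $\wh T$ all go through as written. The paper itself does not give a proof of this lemma---it simply records it as ``immediate from \cite{DM95}''---so your direct check is exactly the kind of routine verification the authors have in mind, and there is nothing further to compare.
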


Now we are ready to state the main  result regarding the unitary equivalence of
the auxiliary  boundary triplets defined by \eqref{hatbt}.
  \begin{theorem}\label{th5.4}
Let $A^{(j)}$ be a simple symmetric operator in the separable Hilbert space
$\gH_j$ and let $\Pi_j=\{\cH_j,\G_0^{(j)},\G_1^{(j)}\}$ be a boundary triplet
for $A^{(j)*}$ with the Weyl function $M_j(\cd)$, $j\in \{1,2\}$. Moreover, let
$A_{0}^{(j)}=\ker \G_0^{(j)},$ let $\cH$ be a separable Hilbert space, and let
$B_j\in [\cH_j]$ and $K_j\in [\cH,\cH_j],\; j\in \{1,2\},$ be operators such
that $0\in\rho(K_1)\cap\rho(K_2)$ and
 \begin {equation*}
 \Omega_+:=\rho(A^{(1)}_{B_1})\cap\rho(A^{(2)}_{B_2})\cap \bC_+\neq\emptyset.
\end{equation*}
Assume, in addition, that at least one of the assumptions (a1)--(a4) listed in
Propositions \ref{pr5.2}, \ref{pr5.2b} and Corollary \ref{cor5.2a} is
satisfied.  Then the boundary triplets $\wh\Pi_{K_1,B_1}$ and
$\wh\Pi_{K_2,B_2}$ defined in Lemma \ref{lem5.3} are unitarily equivalent by
means of a unitary operator $U\in [\gH_1,\gH_2]$.  In particular, the pairs of
extensions $\{A_{0}^{(1)}, A^{(1)}_{B_1}\}$ and $\{A_0^{(2)}, A^{(2)}_{B_2}\}$
are unitarily equivalent by means of the same $U$.
   \end{theorem}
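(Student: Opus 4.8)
The plan is to reduce the assertion to an equality of the Weyl functions of the auxiliary triplets $\wh\Pi_{K_1,B_1}$ and $\wh\Pi_{K_2,B_2}$, after which Theorem \ref{th2.8} applies directly. First I would invoke the appropriate statement among Proposition \ref{pr5.2}, Proposition \ref{pr5.2b} and Corollary \ref{cor5.2a}: under whichever of the hypotheses (a1)--(a4) is assumed, one obtains the identities \eqref{5.16} and \eqref{5.16a}, namely $\wt\Sigma_1=\wt\Sigma_2$, $\im\wt B_1=\im\wt B_2$, $\wt C_1-\re\wt B_1=\wt C_2-\re\wt B_2$ and $\wt D_1=\wt D_2$, where $\wt\Sigma_j,\wt C_j,\wt D_j,\wt B_j$ are defined by \eqref{5.5A} and \eqref{5.5B}.

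The key step is to recognize these four identities as an equality of the integral-representation data of the transformed Weyl functions. By Lemma \ref{lem5.3}, the triplet $\wh\Pi_{K_j,B_j}$ is an ordinary boundary triplet for $A^{(j)*}$ with $\ker\wh\G_0^{(j)}=A_0^{(j)}$ and Weyl function $\wh M_j(z)=K_j^{-1}(M_j(z)-\re B_j)K_j^{-1*}$ (see \eqref{hatweyl}). Substituting the representation \eqref{5.3A} of $M_j$ and using that congruence by $K_j^{-1}$ commutes with the real and imaginary parts, I would read off that $\wh M_j$ has constant term $\wt C_j-\re\wt B_j$, linear coefficient $\wt D_j\ge 0$ and spectral measure $\wt\Sigma_j$. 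Since an $R$-function is uniquely determined by these parameters together with its spectral measure, the identities \eqref{5.16} and \eqref{5.16a} give $\wh M_1(z)=\wh M_2(z)$ for $z\in\bC_+$.

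At this point Theorem \ref{th2.8} applies: as $A^{(1)}$ and $A^{(2)}$ are simple, the ordinary boundary triplets $\wh\Pi_{K_1,B_1}$ and $\wh\Pi_{K_2,B_2}$, having equal Weyl functions on $\bC_+$, are unitarily equivalent by means of some unitary $U\in[\gH_1,\gH_2]$. It remains to transfer this to the prescribed extensions via Lemma \ref{lem2.7}. Since $\ker\wh\G_0^{(j)}=A_0^{(j)}$, the operators $A_0^{(1)}$ and $A_0^{(2)}$ are unitarily equivalent by $U$. For the extensions $A_{B_j}^{(j)}$ I would compute, using \eqref{5.20} with $T=B_j$, that their boundary operator in the triplet $\wh\Pi_{K_j,B_j}$ equals $\wh B_j=K_j^{-1}(B_j-\re B_j)K_j^{-1*}=i\,\im\wt B_j$; the identity $\im\wt B_1=\im\wt B_2$ then shows $\wh B_1=\wh B_2$, so $A_{B_1}^{(1)}$ and $A_{B_2}^{(2)}$ carry the same boundary operator in their respective transformed triplets. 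Lemma \ref{lem2.7} then yields their unitary equivalence by the same $U$, hence the unitary equivalence of the two pairs.

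I expect the principal subtlety to lie not in the equality of the Weyl functions --- which is essentially bookkeeping with the integral representation --- but in this last transfer step. The equality $\wh M_1=\wh M_2$ together with Theorem \ref{th2.8} only delivers unitary equivalence of the triplets (and thereby of the self-adjoint references $A_0^{(j)}$); passing it to the non-self-adjoint extensions $A_{B_j}^{(j)}$ genuinely requires the extra information $\im\wt B_1=\im\wt B_2$ from \eqref{5.16}, which is exactly what guarantees that $A_{B_j}^{(j)}$ has the same coordinate $\wh B_j$ in both auxiliary triplets. Care is also needed to confirm that $\wt C_j-\re\wt B_j$, $\wt D_j$ and $\wt\Sigma_j$ are indeed the \emph{canonical} parameters of $\wh M_j$ (self-adjointness of the constant term, $\wt D_j\ge 0$, and the integrability \eqref{2.2a} of $\wt\Sigma_j$), so that the uniqueness of the representation \eqref{2.2} may legitimately be invoked.
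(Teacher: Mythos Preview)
Your proposal is correct and follows the same overall scheme as the paper: derive the identities \eqref{5.16}--\eqref{5.16a}, show $\wh M_1=\wh M_2$, apply Theorem~\ref{th2.8}, and then transfer to $A_{B_j}^{(j)}$ via Lemma~\ref{lem2.7} using $\wh B_1=\wh B_2=i\,\im\wt B_1$. The one place where the paper proceeds differently is in establishing $\wh M_1=\wh M_2$: rather than reconstructing each $\wh M_j$ from its integral-representation data $(\wt C_j-\re\wt B_j,\,\wt D_j,\,\wt\Sigma_j)$ and invoking uniqueness of \eqref{2.2}, the paper uses only the single identity $\im\wt B_1=\im\wt B_2$ and returns to the hypothesis \eqref{5.4A}. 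Inverting \eqref{5.4A} gives $K_1^{-1}(B_1-M_1(z))K_1^{-1*}=K_2^{-1}(B_2-M_2(z))K_2^{-1*}$ on $\Omega_+$; subtracting $i\,\im\wt B_j$ from each side (equal by \eqref{5.16}) and noting $\re\wt B_j=K_j^{-1}(\re B_j)K_j^{-1*}$ yields $\wh M_1(z)=\wh M_2(z)$ directly, without unpacking the integral representation. This is a bit more economical --- the other three identities in \eqref{5.16}--\eqref{5.16a} are never used --- but your route is equally valid, and your closing remarks about verifying that $(\wt C_j-\re\wt B_j,\wt D_j,\wt\Sigma_j)$ are the \emph{canonical} parameters of $\wh M_j$ are well taken.
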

   \begin{proof}
 According to Propositions  \ref{pr5.2}, \ref{pr5.2b} and Corollary \ref{cor5.2a}
$\im \wt B_1=\im \wt B_2$ and by the last equality in \eqref{5.5B} one has
    \begin{equation}\label{5.21}
K_1^{-1}(\im B_1) K_1^{-1*}=K_2^{-1}(\im B_2) K_2^{-1*}.
  \end{equation}
Combining this identity with \eqref{5.4A} one obtains
   \begin {equation}\label{5.22}
K_1^{-1}(M_1(z)-\emph{\re} B_1)K_1^{-1*} = K_2^{-1}(M_2(z)-\emph{\re}
B_2)K_2^{-1*},\quad z\in\Omega_+.
  \end{equation}

On the other hand, according to \eqref{hatweyl} the Weyl function $\wh
M_j(\cd)$ corresponding to the boundary triplet $\wh\Pi_{K_j,B_j}$ is
\begin{equation*}
\wh  M_j(z)=K_j^{-1}(M_j(z)-\re B_j)K_j^{-1*}, \quad j\in\{1,2\}, \quad
z\in\bC_+\cup \bC_-.
\end{equation*}
Combining this equality with \eqref{5.22} yields $\wh M_1(z) = \wh M_2(z)$,
$z\in\Omega_+ $.  By Theorem \ref{th2.8}, the boundary triplets
$\wh\Pi_{K_1,B_1}$ and $\wh\Pi_{K_2,B_2}$  are unitarily equivalent by means of
a  unitary operator $U\in [\gH_1,\gH_2]$.

Next, according to \eqref{5.20} a boundary operator (coordinate) $\wh B_j$ of
the extension $\wt A_j:=A_{B_j}$ in the boundary triplet $\wh\Pi_{K_j,B_j}$ is
\begin{equation*}
\wh B_j=K_j^{-1}(B_j-\re B_j)K_j^{-1*}=i\, K_j^{-1} (\im B_j)K_j^{-1*}, \quad
j\in\{1,2\}.
  \end{equation*}
 Hence, by \eqref{5.21},  $\wh B_1 = \wh B_2$ and according to  Lemma
\ref{lem2.7} the extensions $A_{B_1}$ and $A_{B_2}$ are unitarily equivalent by
means of $U$. Moreover, by  Lemma \ref{lem2.7}, the extensions $A_0^{(1)}$ and
$A_0^{(2)}$ are also unitarily equivalent by means of $U$, since their boundary
relations (coordinates) in the triplets  $\wh\Pi_{K_1,B_1}$ and
$\wh\Pi_{K_2,B_2}$ coincide: $A_{0}^{(1)} = \ker \G_0^{(1)}=\ker \wh\G_0^{(1)}$
and $A_{0}^{(2)} = \ker \G_0^{(2)} = \ker \wh\G_0^{(2)}.$
         \end{proof}

Next it is shown that the function $\wt M(\cdot)$ defined in \eqref{5.1} is the
Weyl function of a boundary triplet $\wt\Pi$ for the dual pair $\DA$. This
statement explains appearance of the function $\wt M(\cdot)$ in this section
and, in fact, has motivated our investigations. The following statement, which
is immediate from \cite{MalMog02}, establishes a key connection between Theorem
\ref{th5.4} and the theory of boundary triplets for dual pairs.
       \begin{proposition}\label{pr5.4a}
Let $A$ be a symmetric operator in $\gH$ and let $\Pi=\{\cK, \G_0, \G_1\}$ be a
boundary triplet for $A^*$ with the Weyl function $M(\cd)$. Let $B\in [\cK],\;
K\in [\cH,\cK],\; 0\in \rho (K)$, and define the linear mappings
$$
 \wt\G=\begin{pmatrix}\wt \G_0\cr\wt\G_1\end{pmatrix}:A^*\to\cH\oplus \cH,
 \quad   \wt \G^\top=\begin{pmatrix}\wt \G_0^\top\cr
\wt \G_1^\top\end{pmatrix}:A^*\to \cH\oplus \cH
 $$
by
\begin {equation*}
\wt \G_0=K^{-1}(B\G_0-\G_1), \quad \wt\G_1=K^*\G_0;\quad \wt
\G_0^\top=K^{-1}(B^*\G_0-\G_1), \quad \wt \G_1^\top=K^*\G_0.
\end{equation*}
Then $\wt\Pi=\{\cH\oplus \cH,\wt\G, \wt \G^\top\}$ is a boundary triplet for
the dual pair $\{A,A\}$, such that
\begin {equation}\label{5.23.2}
\wt A_0 :=\ker\wt \G_0 = \ker (\G_1-B\G_0),
\end{equation}
and the corresponding Weyl function is
\begin {equation}\label{5.23.3}
 \wt M(z):=M_{\wt\Pi}(z)=K^*(B-M(z))^{-1}K, \quad z\in\rho (\wt A_0).
\end{equation}
If, in addition, $B$ is accumulative, $\im B\leq 0$, then $\bC_+\subset \rho
(\wt A_0) $ and $\im \wt M(z)\geq 0 , \; z\in\bC_+.$ Hence $\wt M(\cdot)$
admits integral representation \eqref{2.2} in  $\bC_+$.
    \end{proposition}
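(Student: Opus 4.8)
The plan is to check, one at a time, the three requirements in Definition \ref{def2.10} for $\wt\Pi$ to be a boundary triplet for $\DA$, then to read off $\wt A_0$ and compute $M_{\wt\Pi}$, and finally to treat the accumulative case.

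First I would verify the abstract Green's identity \eqref{2.11} by direct substitution. The only algebraic input is the elementary identity $(K^*a, K^{-1}b)_{\cH} = (a,b)_{\cK}$, valid since $KK^{-1}=I_{\cK}$ and $0\in\rho(K)$. Using it together with the definitions of $\wt\G,\wt\G^\top$, one finds
\[
(\wt\G_1\wh f,\wt\G_0^\top\wh g)=(B\G_0\wh f,\G_0\wh g)_{\cK}-(\G_0\wh f,\G_1\wh g)_{\cK},\qquad
(\wt\G_0\wh f,\wt\G_1^\top\wh g)=(B\G_0\wh f,\G_0\wh g)_{\cK}-(\G_1\wh f,\G_0\wh g)_{\cK}.
\]
The decisive point is that the two $B$-terms cancel upon subtraction, leaving $(\G_1\wh f,\G_0\wh g)_{\cK}-(\G_0\wh f,\G_1\wh g)_{\cK}$, which equals $(f',g)-(f,g')$ by the ordinary Green's identity \eqref{2.3} for $\Pi$. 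Next I would obtain surjectivity of $\wt\G$ and $\wt\G^\top$ from that of $\G=(\G_0\;\;\G_1)^\top$: each is the composition of $\G$ with a $2\times2$ block operator matrix $\cK\oplus\cK\to\cH\oplus\cH$ (with $\wt\G_1=K^*\G_0$ in the bottom row and $\wt\G_0=K^{-1}(B\G_0-\G_1)$, respectively $\wt\G_0^\top=K^{-1}(B^*\G_0-\G_1)$, on top) that is boundedly invertible, since from $(\wt\G_0,\wt\G_1)$ one recovers $\G_0=(K^*)^{-1}\wt\G_1$ and then $\G_1=B\G_0-K\wt\G_0$.

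The identity \eqref{5.23.2} is then immediate: because $K^{-1}$ is injective, $\ker\wt\G_0=\ker(B\G_0-\G_1)=\ker(\G_1-B\G_0)=A_B$ in the notation of \eqref{2.4}. To compute the Weyl function I would restrict the defining relation \eqref{1.6} to $\wh\gN_z(A)$ and insert $\G_1\up\wh\gN_z(A)=M(z)\G_0\up\wh\gN_z(A)$, which yields $\wt\G_0\wh f_z=K^{-1}(B-M(z))\G_0\wh f_z$ and $\wt\G_1\wh f_z=K^*\G_0\wh f_z$. By Proposition \ref{pr2.5} applied with $\Theta=B$ one has $0\in\rho(B-M(z))$ exactly for $z\in\rho(A_B)=\rho(\wt A_0)$; solving $\G_0\wh f_z=(B-M(z))^{-1}K\,\wt\G_0\wh f_z$ and substituting gives $\wt\G_1\wh f_z=K^*(B-M(z))^{-1}K\,\wt\G_0\wh f_z$, i.e. \eqref{5.23.3}.

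Finally, for accumulative $B$ (so $\im B\le0$) I would note that a bounded accumulative operator is automatically $m$-accumulative, hence so is the extension $A_B=\wt A_0$ by Proposition \ref{pr2.4}(ii), and therefore $\bC_+\subset\rho(\wt A_0)$; this is exactly where the well-definedness of $\wt M(\cd)$ on $\bC_+$ comes from. For the sign of the imaginary part I would use the resolvent identity
\[
\im\bigl((B-M(z))^{-1}\bigr)=-(B-M(z))^{-1}\bigl(\im B-\im M(z)\bigr)\bigl((B-M(z))^*\bigr)^{-1}.
\]
Since $\im B\le0$ and $\im M(z)\ge0$ on $\bC_+$, the middle factor is nonpositive, so the right-hand side is nonnegative, and hence $\im\wt M(z)=K^*\,\im\bigl((B-M(z))^{-1}\bigr)\,K\ge0$ for $z\in\bC_+$; the integral representation \eqref{2.2} on $\bC_+$ then follows from the Herglotz representation of $[\cH]$-valued functions that are holomorphic with nonnegative imaginary part on the upper half-plane. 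I expect the Green's identity and the surjectivity to be routine bookkeeping; the step needing genuine care is establishing $\bC_+\subset\rho(\wt A_0)$, which rests on transferring the accumulativity of $B$ to $A_B$ (equivalently, on combining Proposition \ref{pr2.5} with the sign of $\im(B-M(z))$).
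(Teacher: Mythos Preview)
Your proof is correct. The paper does not actually prove Proposition~\ref{pr5.4a}; it merely states that the result is ``immediate from \cite{MalMog02}'' and offers no argument of its own. Your direct verification --- checking the Green's identity \eqref{2.11} by cancelling the $B$-terms, deducing surjectivity of $\wt\G,\wt\G^\top$ from that of $\G$ via the invertible block matrix, reading off $\wt A_0=A_B$, computing $M_{\wt\Pi}$ by restricting to $\wh\gN_z(A)$, and handling the accumulative case via Proposition~\ref{pr2.4}(ii) and the resolvent identity for $\im((B-M(z))^{-1})$ --- is exactly the kind of routine calculation the citation is meant to cover, and each step is sound. One small remark: the formula \eqref{5.23.3} as written in the statement is asserted for all $z\in\rho(\wt A_0)$, while your derivation (and the right-hand side itself, through $M(z)$) lives a priori on $\rho(A_0)\cap\rho(\wt A_0)$; this is harmless since both sides are holomorphic on $\rho(\wt A_0)$ and the identity extends by analytic continuation, but it is worth noting.
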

%
%
\begin{remark}\label{rem5.4a}
Proposition \ref{pr5.4a} allows one to reformulate the statement of Theorem
\ref{th5.4}  on  the unitary equivalence of the extensions $A^{(1)}_{B_1}$ and
$A^{(2)}_{B_2}$  in terms of the equality of the Weyl functions corresponding
to special boundary triplets for the dual pair $\DA.$  The corresponding
routine reformulations are left for the reader.
  \end{remark}
        \begin{remark}\label{rem5.4b}
Let $\Pi=\bt$ be an ordinary boundary triplet for $A^*$ with the Weyl function
$M(\cd)$. It follows from the results in \cite{MalMog02} that the Weyl
functions of any boundary triplet $\wt \Pi =\{\cH\oplus \cH,\wt\G, \wt
\G^\top\}$ for the dual pair $\DA$ is obtained via the linear fractional
transform
   \begin {equation}\label{5.23.5}
\wt M(z)=(X_3+X_4 M(z))(X_1+X_2 M(z))^{-1}, \quad z\in\rho (\wt A_0),
  \end{equation}
where
  \begin {equation}\label{5.23.4}
X=\begin{pmatrix} X_1 & X_2 \cr X_3 & X_4
\end{pmatrix}: \cH\oplus\cH \to \cH\oplus\cH
  \end{equation}
is a bounded operator with bounded inverse.

If, in addition, $0\in\rho (X_2)$ and $X_3=X_4X_2^{-1}X_1-X_2^{-1*}$, then the
equality \eqref{5.23.5} takes the form
   \begin {equation}\label{5.23.6}
\wt M(z)= C + K^*(B-M(z))^{-1}K, \quad z\in\rho (\wt A_0).
   \end{equation}
with $K=-X_2^{-1}$, $B=-X_2^{-1}X_1$, and $C=X_4X_2^{-1}$.

We have no general analog of Theorem \ref{th5.4} for Weyl functions of the more
general form \eqref{5.23.6} or \eqref{5.23.5}. Moreover, we have no inner
characterization of the Weyl functions of the form \eqref{5.23.3} or
\eqref{5.23.6}.
    \end{remark}

\section{Unitary equivalent  boundary triplets for  symmetric dual pairs}
In this section we investigate unitary equivalence of some boundary triplets
for the dual pair $\DA$ in terms of the corresponding Weyl functions.

 Assume  that $A\in [\cD,\gH]$ is a bounded
symmetric operator with the closed domain $\cD\subset \gH$ and let
   \begin {equation}\label{2.14}
\gN=\gH\ominus\cD (=\mul A^*), \quad \wh \gN=\{\{0,n\}:n\in\gN\}(\subset A^*)
  \end{equation}
Consider the block-matrix  representation of $A,$ 

$$
A=\begin{pmatrix} A_{00} \cr A_{10} \end{pmatrix}: \cD\to \gH= \cD\oplus\gN.
$$

A proper extension $\wt A\in\exa$ is called bounded if $\wt A\in [\gH]$. Every
bounded extension $\wt A\in\exa$ admits a block-matrix representation
\begin{equation}\label{2.15}
\wt A=\begin{pmatrix} A_{00} & A_{10}^*\cr A_{10} &B \end{pmatrix}:
\cD\oplus\gN\to \cD\oplus\gN
\end{equation}
with some $B\in [\gN]$ and vise versa. The adjoint $A^*$ of $A$ is multivalued
and with any bounded extension $\wt A$ it admits the graph decomposition
\begin{equation}\label{A*}
  A^*=\wt A \,\dot{+}\, \widehat\gN,
\end{equation}
where $\dot{+}$ stands for the direct sum of the graphs; cf. \cite[Lemma
5.2]{MalMog05}.

  \begin{proposition}\label{pr2.14}$\,$\cite{MalMog05}
Let $\AD$ be a bounded symmetric operator, let $\Pi=\bta$ be boundary triplet
for $\DA$ such that $A_0(=\ker \G_0)$ is a bounded extension of $A$ and let
$\pi_2$ be the orthoprojector in $\gH\oplus\gH$ onto $\{0\}\oplus\gH$. Then:

(i) The  operators $\G_0\up \wh\gN$ and $\G_0^\top\up \wh\gN$ isomorphically
map $\wh\gN$ onto $\cH$, so that the operators
  \begin{gather}
\g_\Pi=\pi_2(\G_0\up \wh\gN)^{-1}, \quad \g_{\Pi_\top}=\pi_2(\G_0^\top\up
\wh\gN)^{-1},\label{2.16}\\
\G_1\up \wh\gN=\cF_\Pi\G_0\up \wh\gN\label{2.16a}
  \end{gather}
are well defined and  $\g_\Pi \in [\cH,\gN], \; \g_{\Pi_\top} \in [\cH,\gN]$,
and $\cF_\Pi\in [\cH].$

(ii) The corresponding Weyl function $M_\Pi(\cd)$ is given by
\begin {equation}\label{2.17}
 M_{\Pi}(z)=\cF_\Pi+\g_{\Pi_\top}^*(A_0-z)^{-1}\g_\Pi,
 \quad z\in\rho(A_0).
\end{equation}
\end{proposition}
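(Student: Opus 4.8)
The plan is to reduce everything to the graph decomposition $A^*=A_0\,\dot{+}\,\wh\gN$ from \eqref{A*}, which is available precisely because $A_0=\ker\G_0$ is a bounded extension of $A$, and then to transport the already-established facts about the $\g$-field at a finite point $z\in\rho(A_0)$ to the ``defect subspace at infinity'' $\wh\gN$. The boundedness of $(A_0-z)^{\pm1}$ will do all the analytic work.

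For part (i) I would fix $z\in\rho(A_0)$ and introduce the map $V_z:\wh\gN\to\wh\gN_z(A)$ sending $\{0,n\}$ to $\{f_z,zf_z\}$ with $f_z:=-(A_0-z)^{-1}n$. Using $A^*=A_0\,\dot{+}\,\wh\gN$ one checks $\{f_z,zf_z\}=\{f_z,A_0f_z\}+\{0,n\}$, so $V_z$ is a bounded bijection with bounded inverse, and moreover $\G_0\{f_z,zf_z\}=\G_0\{0,n\}$ since $\{f_z,A_0f_z\}\in A_0=\ker\G_0$. Hence $\G_0\up\wh\gN=(\G_0\up\wh\gN_z(A))\circ V_z$. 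As $\G_0\up\wh\gN_z(A)$ is already known to be an isomorphism onto $\cH$ (this is exactly what makes $\g_\Pi(z)=\pi_1(\G_0\up\wh\gN_z(A))^{-1}$ well defined in \eqref{2.12}), it follows that $\G_0\up\wh\gN$ is a topological isomorphism of $\wh\gN$ onto $\cH$. Applying the same argument to the reversed triplet $\Pi_\top$, for which $\ker\G_0^\top=A_0^*$ is again a bounded extension with $\rho(A_0^*)\ne\emptyset$, gives that $\G_0^\top\up\wh\gN$ is an isomorphism onto $\cH$ as well. Consequently $\g_\Pi=\pi_2(\G_0\up\wh\gN)^{-1}$ and $\g_{\Pi_\top}=\pi_2(\G_0^\top\up\wh\gN)^{-1}$ are bounded with ranges in $\gN$, and $\cF_\Pi:=(\G_1\up\wh\gN)(\G_0\up\wh\gN)^{-1}\in[\cH]$, since $\G_1$ is bounded in the graph topology of $A^*$.

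For part (ii) I would read off $M_\Pi(z)$ from its defining relation $\G_1\wh f_z=M_\Pi(z)\G_0\wh f_z$ on $\wh\gN_z(A)$. Set $\xi:=\G_0\wh f_z$; by part (i) the $\wh\gN$-component of $\wh f_z=\{f_z,zf_z\}$ is $\{0,n\}$ with $n=\g_\Pi\xi$, whence $f_z=-(A_0-z)^{-1}\g_\Pi\xi$. Splitting $\wh f_z=\{f_z,A_0f_z\}+\{0,n\}$ and applying $\G_1$ gives $\G_1\wh f_z=\G_1\{f_z,A_0f_z\}+\cF_\Pi\xi$, so the whole matter reduces to the $A_0$-part $\G_1\{f_z,A_0f_z\}$. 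The key computation is the identity
\begin{equation*}
\G_1\{f,A_0f\}=-\g_{\Pi_\top}^*f,\qquad f\in\gH,
\end{equation*}
which I would obtain from Green's identity \eqref{2.11} by taking $\wh f=\{f,A_0f\}\in A_0$ (so $\G_0\wh f=0$) and $\wh g=\{0,n'\}\in\wh\gN$: the right-hand side collapses to $(\G_1\{f,A_0f\},\G_0^\top\{0,n'\})$, the left-hand side is $-(f,n')$, and writing $n'=\g_{\Pi_\top}\eta$ with $\eta:=\G_0^\top\{0,n'\}$ running through all of $\cH$ yields the claim after transferring $\g_{\Pi_\top}$ across the inner product. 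Substituting $f=f_z=-(A_0-z)^{-1}\g_\Pi\xi$ then gives $\G_1\{f_z,A_0f_z\}=\g_{\Pi_\top}^*(A_0-z)^{-1}\g_\Pi\xi$, and since $\xi$ exhausts $\cH$ we arrive at $M_\Pi(z)=\cF_\Pi+\g_{\Pi_\top}^*(A_0-z)^{-1}\g_\Pi$, i.e.\ \eqref{2.17}.

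The main obstacle, and the only genuinely nonroutine point, is the bookkeeping in this last identity: Green's identity for the dual pair pairs $\G_1$ against $\G_0^\top$ rather than $\G_0$, so it is the \emph{reversed} $\g$-field $\g_{\Pi_\top}$, not $\g_\Pi$, that must appear on the left in \eqref{2.17}. Keeping the two triplets $\Pi$ and $\Pi_\top$ (equivalently, the operators $A_0$ and $A_0^*$) carefully separated throughout is what produces the asymmetric form of the Weyl function; everything else is a direct consequence of the decomposition $A^*=A_0\,\dot{+}\,\wh\gN$ and the boundedness of $(A_0-z)^{-1}$.
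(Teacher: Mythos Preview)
Your argument is correct. The paper does not actually prove Proposition~\ref{pr2.14}; it is quoted from \cite{MalMog05} without proof. However, your approach---decompose $\wh f_z=\{f_z,A_0f_z\}+\{0,n\}$ via \eqref{A*}, and extract $\G_1\{f,A_0f\}$ from Green's identity \eqref{2.11} applied to $\wh f=\{f,A_0f\}$ and $\wh g=\{0,n'\}\in\wh\gN$---is precisely the technique the paper itself uses later in the proof of Lemma~\ref{lem5.8} (there in the special case $\g_\Pi=\g_{\Pi_\top}$, giving $\G_1\{f,A_0f\}=-\g^*P_\gN f$; your version correctly keeps $\g_{\Pi_\top}$ in the general case). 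One cosmetic point: when you write $\G_1\{f,A_0f\}=-\g_{\Pi_\top}^*f$ for $f\in\gH$, you are silently regarding $\g_{\Pi_\top}\in[\cH,\gN]$ as a map into $\gH$, so that $\g_{\Pi_\top}^*$ acts on all of $\gH$; the paper writes the equivalent $-\g^*P_\gN f$. Either convention is fine, and it is exactly the one implicit in the formula \eqref{2.17}.
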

Recall (\cite{DM95, Mal92, MalMog02, MalMog05}) that  the operator $\cF_\Pi$ in
\eqref{2.16a}  is called a forbidden operator corresponding to $\Pi.$

Next assume that $\AD$ is a bounded symmetric operator with the closed domain
$\cD\subset \gH$ and let $\gN$ and $\wh\gN$ be the subspaces defined in
\eqref{2.14}.
  \begin{definition}\label{def5.7}
A boundary triplet $\Pi=\bta$ for $\DA$ is said to belong to the class
$BT_\infty $ if $A_0(=\ker \G_0)\in [\gH]$ and $\G_0\{0,n\}= \G_0^\top\{0,n\},
\; n\in \gN $; in view of \eqref{2.16} this latter condition is equivalent to
the equality
\begin {equation}\label{5.31}
\g_\Pi=\g_{\Pi_\top}=:\g.
\end{equation}
  \end{definition}

Next it is shown that for each boundary triplet $\Pi=\bta$ from the class
$BT_\infty $ the linear mappings  $\G$ and $\G^\top$ can explicitly be
expressed by means of the operators $A_0,\g_\Pi,\g_{\Pi_\top}$, and $\cF_\Pi$
defined in Proposition~\ref{pr2.14}.
\begin{lemma}\label{lem5.8}
Let $A\in [\cD,\gH]$ be a bounded symmetric operator in $\gH$ and let $\Pi=\bta
\in BT_\infty  $ be a boundary triplet for $\DA$. Moreover, let $A_0=\ker
\G_0$, let $\cF_\Pi\in [\cH]$ be the forbidden operator \eqref{2.16a} and let
$\g \in [\cH, \gN]$ be the operator defined by \eqref{5.31}. Then the operators
$\G_j$ and $\G_j^\top$ admit the representations
\begin{gather}
\G_0\{f,f'\}=\g^{-1}(f'-A_0f), \quad \G_1\{f,f'\}=-\g^*P_\gN f+\cF_\Pi
\g^{-1}(f'-A_0f), \label{5.32}\\
\G_0^\top \{f,f'\}=\g^{-1}(f'-A_0^*f), \quad \G_1^\top\{f,f'\}=-\g^*P_\gN
f+\cF_\Pi^* \g^{-1}(f'-A_0^* f), \label{5.33}
\end{gather}
where $\{f,f'\}\in A^*$.

Conversely, let $\gH$ and $\cH$ be Hilbert spaces, let $A\in [\cD,\gH]\;
(\cD\subset \gH)$ be a bounded symmetric operator, let $\wt A\in [\gH]$ be a
bounded extension of $A$, let $K$ be an isomorphism from $\cH$ onto
$\gN(=\gH\ominus \cD)$, and let $F\in [\cH]$. Then the operators
$\G=\begin{pmatrix}\G_0\cr \G_1
\end{pmatrix}$ and $\G^\top=\begin{pmatrix}\G_0^\top\cr \G_1^\top
\end{pmatrix}$ defined for all $\{f,f'\}\in A^*$ by
\begin{gather}
\G_0\{f,f'\}=K^{-1}(f'-\wt A f), \quad \G_1\{f,f'\}=-K^*P_\gN f+F
K^{-1}(f'- \wt A f), \label{5.34}\\
\G_0^\top \{f,f'\}=K^{-1}(f'-\wt A^*f), \quad \G_1^\top\{f,f'\}=-K^*P_\gN f+F^*
K^{-1}(f'-\wt A^* f) \label{5.35}
\end{gather}
form the boundary triplet $\Pi=\bta\in BT_\infty$ for $\DA$. Moreover, 
\begin{equation}\label{5.35a}
A_0=\wt A, \quad \g=K, \quad \cF_\Pi=F.
\end{equation}
\end{lemma}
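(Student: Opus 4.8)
The plan is to prove the two directions by direct computation from the defining formulas \eqref{2.16}--\eqref{2.17} together with the graph decomposition \eqref{A*}. For the forward direction, I would first fix an arbitrary $\{f,f'\}\in A^*$ and use \eqref{A*} to write it as $\{f,f'\}=\{f,\wt A f\}\,\dot{+}\,\{0,n\}$ with $\wt A=A_0$, so that $n=f'-A_0 f\in\gN$. By linearity of $\G_0$ and the identity $\G_0\up A_0=0$ (since $A_0=\ker\G_0$), I get $\G_0\{f,f'\}=\G_0\{0,n\}$. The key point is that by \eqref{2.16} the operator $\g=\g_\Pi=\pi_2(\G_0\up\wh\gN)^{-1}$ inverts $\G_0$ on $\wh\gN$ in the sense that $\g\,\G_0\{0,n\}=n$ for $n\in\gN$, whence $\G_0\{0,n\}=\g^{-1}n=\g^{-1}(f'-A_0 f)$. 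This gives the first formula in \eqref{5.32}.

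For the second formula in \eqref{5.32} I would split $\G_1\{f,f'\}=\G_1\{f,A_0 f\}+\G_1\{0,n\}$. On the extension $A_0$, the abstract Green's identity \eqref{2.11} applied with $\wh g\in A_0=\ker\G_0^\top$... more directly, I would use the known expression for $\G_1\up A_0$ in terms of $P_\gN$; the term $\G_1\{f,A_0 f\}$ should produce $-\g^* P_\gN f$, while $\G_1\{0,n\}=\cF_\Pi\,\G_0\{0,n\}=\cF_\Pi\,\g^{-1}(f'-A_0 f)$ by the definition \eqref{2.16a} of the forbidden operator $\cF_\Pi$. The computation of $\G_1\{f,A_0 f\}$ is where I expect the real work: one must verify, using Green's identity \eqref{2.11} and the defining relations \eqref{2.16}, that the ``regular part'' contributes exactly $-\g^*P_\gN f$. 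The formulas \eqref{5.33} for $\G_0^\top,\G_1^\top$ follow by the identical argument applied to the reversed triplet $\Pi_\top$, using $A_{0\top}=A_0^*$, the relation $\g_{\Pi_\top}=\g$ from \eqref{5.31}, and the fact that the forbidden operator of $\Pi_\top$ is $\cF_\Pi^*$.

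For the converse, I would start from the operators $\G_j,\G_j^\top$ defined by \eqref{5.34}--\eqref{5.35} and verify the three defining properties of a boundary triplet for $\DA$ from Definition~\ref{def2.10}: surjectivity of $\G$ and $\G^\top$, and Green's identity \eqref{2.11}. Surjectivity follows since $K^{-1}$ maps $\gN$ onto $\cH$ and, given the graph decomposition $A^*=\wt A\,\dot{+}\,\wh\gN$, the components $f'-\wt A f$ range over all of $\gN$ while $P_\gN f$ is free; thus $\G$ has full range $\cH\oplus\cH$, and similarly for $\G^\top$. The verification of \eqref{2.11} is the main obstacle: substituting \eqref{5.34}--\eqref{5.35} into both sides and expanding, the terms involving $F,F^*$ must cancel (they are adjoint-paired) and the remaining terms involving $K,K^*P_\gN$ must reproduce $(f',g)-(f,g')$. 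Here I would use that $\wt A$ is a bounded extension of the \emph{symmetric} operator $A$, so $\wt A\up\cD=A\up\cD$ is symmetric and $(\wt A f,g)-(f,\wt A^* g)$ reduces, after writing $f=P_\cD f+P_\gN f$, to the boundary contribution carried by $P_\gN f,P_\gN g$; matching this against the $-K^*P_\gN$ terms and the $K^{-1}$ terms is the computation that closes the identity. Finally, \eqref{5.35a} is read off directly: $A_0=\ker\G_0=\{\{f,f'\}:f'=\wt A f\}=\wt A$; comparing \eqref{5.34} with \eqref{2.16} gives $\g=K$; and comparing with \eqref{2.16a} gives $\cF_\Pi=F$. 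That $\Pi\in BT_\infty$ is immediate since $A_0=\wt A\in[\gH]$ and $\g_\Pi=\g_{\Pi_\top}=K$ by construction, giving \eqref{5.31}.
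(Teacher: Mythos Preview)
Your plan is correct and matches the paper's proof essentially step for step: the paper uses the same decomposition $\{f,f'\}=\{f,A_0f\}+\{0,n\}$ from \eqref{A*}, obtains $\G_1\{f,A_0f\}=-\g^*P_\gN f$ by applying Green's identity \eqref{2.11} to the pair $\wh f=\{f,A_0f\}$, $\wh g=\{0,\g h\}$ (precisely the step you flagged as ``the real work''), derives \eqref{5.33} by passing to the reversed triplet $\Pi_\top$, and for the converse simply asserts that surjectivity and \eqref{2.11} are ``immediately checked''. One small slip in your narration: $A_0=\ker\G_0$, not $\ker\G_0^\top$, but you immediately redirect to the right computation anyway.
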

   \begin{proof}
Applying the identity \eqref{2.11} to the elements $\{f,A_0f\},\{0,\g h \}\in
A^*$, see \eqref{A*}, \eqref{2.16}, one obtains
\begin {equation*}
-(f,\g h)=(\G_1 \{f,A_0f\},h), \quad f\in\gH, \;\; h\in\cH.
\end{equation*}
Hence,
\begin {equation}\label{5.36}
\G_1\{f, A_0f\}=-\g^* P_\gN f, \quad f\in\gH.
\end{equation}
Moreover, \eqref{A*} shows that every $\{f,f'\}\in A^*$ can be uniquely
decomposed as
\begin{equation}\label{5.38}
\{f,f'\}=\{f,A_0f\}+\{0,n\}
\end{equation}
with $n=f'-A_0f\in \gN$. Hence, by applying $\G_0$ and $\G_1$ to the equality
\eqref{5.38} and taking \eqref{5.36}, \eqref{2.16} and \eqref{2.16a} into
account one obtains
\begin{gather*}
\G_0 \{f,f'\}=\G_0 \{f,A_0f\} +\G_0\{0,n\} =0+ \g_\Pi^{-1}n=\g^{-1}n=\g^{-1}
(f'-A_0f), \\
\G_1 \{f,f'\}=\G_1 \{f,A_0f\} +\G_1\{0,n\} =-\g^* P_\gN f+\cF_\Pi \g_\Pi^{-1}n
\\ =-\g^* P_\gN f +\cF_\Pi\g^{-1} (f'-A_0f), \;\;\; \{f,f'\}\in A^*.
\end{gather*}
Thus \eqref{5.32} is valid. Moreover, the equalities \eqref{5.33} hold, since
they are analogs of \eqref{5.32} for the triplet $\Pi_\top$.

Conversely, let the operators $\G_j$ and $\G_j^\top, \; j\in \{0,1\},$ be
defined by  \eqref{5.34} and \eqref{5.35}. Then it is immediately checked that
$\G A^*=\G^\top A^*=\cH\oplus\cH$ and the identity \eqref{2.11} is satisfied.
Hence $\Pi=\bta$ with operators \eqref{5.34} and \eqref{5.35} is a boundary
triplet for $\DA$. Moreover, the equalities \eqref{5.35a} are implied by
\eqref{5.34} and \eqref{5.35}.
  \end{proof}

Next a class of boundary triplets analogous to that appearing in
Definition~\ref{def5.7} is introduced with a finite real point (instead of
$\infty$). Here $A$ is a, not necessarily bounded or densely defined, symmetric
operator in $\gH$ such that
$\l_0\in\wh\rho(A)\cap \Bbb R\not = \emptyset$. 
\begin{definition}\label{def5.6}
A boundary triplet $\Pi=\bta$ for $\DA$ is said to belong to the class
$BT_{\l_0}$, $\l_0\in {\Bbb R}$, if $\l_0\in\rho (A_0)$ and
 \begin {equation}\label{5.30}
\G_0\{f,\l_0 f\}=\G_0^\top\{f,\l_0 f\}, \quad
f\in\gN_{\l_0}\,(=\ker(A^*-\l_0)).
\end{equation}
\end{definition}
The next lemma provides a connection between the classes $BT_{\l_0}$ and
$BT_\infty$.
   \begin{lemma}\label{lem5.9}
Let $A$ be a symmetric operator in $\gH$ with $\l_0=\ov\l_0\in \wh\rho (A)$,
let $\Pi=\bta\in BT_{\l_0}$ be a boundary triplet for $\DA$, let $A_0=\ker\G_0$
and let $M_\Pi(\cd)$ be the corresponding Weyl function. Assume also that $Y$
is an isomorphism in $\gH\oplus\gH$ given by
\begin {equation}\label{5.39.1}
Y\{f,f'\}=\{f',f+\l_0 f'\}, \quad \{f,f'\}\in \gH\oplus\gH.
\end{equation}
Then:
\begin{enumerate}\def\labelenumi{\rm (\roman{enumi})}
\item $S:=(A-\l_0)^{-1}$ is a bounded symmetric operator with the
closed domain $\dom S=\ran (A-\l_0)^{-1}$;

\item $Y(S^*)=A^*$ and the triplet $\dot \Pi
=\{\cH\oplus\cH, \dot\G,\dot\G^\top\}$ with the mappings
$$
\dot\G=\begin{pmatrix}\dot\G_0\cr \dot\G_1 \end{pmatrix}:S^*\to \cH\oplus \cH
\;\;\;\text{and}\;\;\;\dot\G^\top=\begin{pmatrix}\dot\G_0^\top\cr \dot\G_1^\top
\end{pmatrix}:S^*\to \cH\oplus \cH
$$
given by
\begin {equation}\label{5.39.2}
\dot\G_0=\G_0 Y\up S^*, \quad \dot\G_1=-\G_1 Y\up S^*, \quad
\dot\G_0^\top=\G_0^\top Y\up S^*, \quad \dot\G_1^\top=-\G_1^\top Y\up S^*
\end{equation}
is a boundary triplet for $\{S,S\}$ which belongs to the class $BT_\infty$;

\item $S_0(=\ker \dot \G_0)=(A_0-\l_0)^{-1}$ and the corresponding
Weyl function is
\begin {equation}\label{5.39.3}
M_{\dot\Pi}(z)=-M_\Pi (\l_0+\tfrac 1 z), \quad z\in\rho (S_0), \quad z\neq 0.
\end{equation}
\end{enumerate}
     \end{lemma}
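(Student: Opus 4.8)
The plan is to treat $A$, $A^*$, $S$, and $S^*$ uniformly as subspaces of $\gH\oplus\gH$ and to exploit that the operator $Y$ in \eqref{5.39.1} is a bounded bijection of $\gH\oplus\gH$ implementing the parameter change $w=\l_0+1/z$. For (i), since $\l_0=\ov\l_0\in\wh\rho(A)$, the operator $A-\l_0$ is injective with closed range, so $S=(A-\l_0)^{-1}$ is a bounded operator on the closed subspace $\dom S=\ran(A-\l_0)$. Symmetry of $S$ follows from symmetry of $A-\l_0$ via the relation identity $(T^{-1})^*=(T^*)^{-1}$: indeed $S=(A-\l_0)^{-1}\subset((A-\l_0)^*)^{-1}=S^*$.

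The key computational observation, which I would isolate first, is that for \emph{any} relation $T$ one has $\{g,h\}\in(T-\l_0)^{-1}$ iff $\{h,g+\l_0 h\}=Y\{g,h\}\in T$. Applied to $T=A$ this gives $Y(S)=A$, and applied to $T=A_0$ it gives $Y\bigl((A_0-\l_0)^{-1}\bigr)=A_0$, which I reuse below. For the identity $Y(S^*)=A^*$ I would compute directly rather than through a general adjoint-transformation rule: using $(T^{-1})^*=(T^*)^{-1}$ and $\l_0\in\bR$ one gets $S^*=(A^*-\l_0)^{-1}$, so a generic element of $S^*$ is $\{u'-\l_0 u,\,u\}$ with $\{u,u'\}\in A^*$, and $Y\{u'-\l_0 u,\,u\}=\{u,u'\}$; hence $Y\up S^*$ is a bijection of $S^*$ onto $A^*$.

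For (ii), boundedness of $S_0$ and surjectivity of $\dot\G,\dot\G^\top$ onto $\cH\oplus\cH$ are immediate once $Y\up S^*\colon S^*\to A^*$ is known to be bijective, since $\dot\G=\G\circ(Y\up S^*)$ up to the sign on the second component. The substantive step is Green's identity \eqref{2.11} for $\{S,S\}$, and I expect the sign bookkeeping here to be the main obstacle. I would take $\wh f=\{f,f'\},\ \wh g=\{g,g'\}\in S^*$, insert $Y\wh f,Y\wh g\in A^*$ into \eqref{2.11} for $\{A,A\}$, and note that its left side $(f+\l_0 f',g')-(f',g+\l_0 g')$ collapses to $(f,g')-(f',g)$ because the two $\l_0(f',g')$-terms cancel (using $\l_0\in\bR$). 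The minus signs built into $\dot\G_1=-\G_1 Y\up S^*$ and $\dot\G_1^\top=-\G_1^\top Y\up S^*$ turn the right side into $-(\dot\G_1\wh f,\dot\G_0^\top\wh g)+(\dot\G_0\wh f,\dot\G_1^\top\wh g)$, and multiplying through by $-1$ yields exactly \eqref{2.11} for $\{S,S\}$. To confirm $\dot\Pi\in BT_\infty$ I would compute $\ker\dot\G_0=\{\wh f\in S^*:Y\wh f\in\ker\G_0=A_0\}=Y^{-1}(A_0)=(A_0-\l_0)^{-1}$, which is bounded since $\l_0\in\rho(A_0)$ (Definition \ref{def5.6}); this already gives the first claim of (iii). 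For the second defining condition of Definition \ref{def5.7} I use $\mul S^*=\gH\ominus\ran(A-\l_0)=\gN_{\l_0}(A)$, so for $n\in\mul S^*$ one has $Y\{0,n\}=\{n,\l_0 n\}$, whence $\dot\G_0\{0,n\}=\G_0\{n,\l_0 n\}$ and $\dot\G_0^\top\{0,n\}=\G_0^\top\{n,\l_0 n\}$; their equality is precisely the $BT_{\l_0}$-condition \eqref{5.30} for $\Pi$. Thus the $BT_{\l_0}$ hypothesis is exactly what forces $\dot\Pi\in BT_\infty$.

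For the Weyl function in (iii), I would apply $\dot\G_0,\dot\G_1$ to $\{f_z,zf_z\}$ with $f_z\in\ker(S^*-z)$ and $z\neq0$, as in \eqref{1.6}. Setting $u=zf_z$, the reusable computation gives $Y\{f_z,zf_z\}=\{u,(\l_0+1/z)u\}$ with $u\in\gN_w(A)$ and $w:=\l_0+1/z$, so $\dot\G_0\{f_z,zf_z\}=\G_0\{u,wu\}$ and $\dot\G_1\{f_z,zf_z\}=-\G_1\{u,wu\}$. Substituting $\G_1\{u,wu\}=M_\Pi(w)\G_0\{u,wu\}$ then yields $M_{\dot\Pi}(z)=-M_\Pi(\l_0+1/z)$. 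The admissible range of $z$ is settled by the resolvent factorization $S_0-z=-z\,(A_0-\l_0)^{-1}(A_0-w)$ valid for $z\neq0$, which shows $z\in\rho(S_0)\iff w\in\rho(A_0)$ and completes \eqref{5.39.3}.
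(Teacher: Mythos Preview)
Your proof is correct and follows essentially the same route as the paper's: transport the triplet via $Y$, verify Green's identity for $\{S,S\}$ by direct substitution, compute $\ker\dot\G_0=Y^{-1}(A_0)=(A_0-\l_0)^{-1}$, check the $BT_\infty$ condition via $Y\{0,n\}=\{n,\l_0 n\}$, and read off the Weyl function from $Y\{f_z,zf_z\}$. Your version is more explicit in places where the paper says only ``straightforward to check'' (the Green's identity cancellation) or leaves the domain equivalence $z\in\rho(S_0)\iff \l_0+1/z\in\rho(A_0)$ implicit; the resolvent factorization $S_0-z=-z(A_0-\l_0)^{-1}(A_0-w)$ is a clean way to handle that last point.
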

\begin{proof}
(i) The statement is implied by the inclusion $\l_0\in \wh\rho (A)$.

(ii) It is clear from \eqref{5.39.1} that
\begin {equation*}
Y^{-1}\{g,g'\}=\{g'-\l_0 g,g\}, \quad \{g,g'\}\in\gH\oplus\gH.
\end{equation*}
This together with \eqref{5.39.1} gives the following identities for each
relation $T\in\C (\gH)$:
\[
 Y(T)=T^{-1}+\l_0 I , \quad 
 Y^{-1}(T)=(T-\l_0 I)^{-1}. 
\]
In particular, $Y(S^*)=S^{-1*}+\l_0 I=A^*$ and, therefore, the operators in
\eqref{5.39.2} are well defined. Now, it is straightforward to check that the
Green's identity \eqref{2.11} for the triplet $\Pi$ yields the same identity
\eqref{2.11} for the operators defined in \eqref{5.39.2}. Moreover, the
operators $\dot\G$ and $\dot \G^\top$ are surjective, because so are the
operators $\dot\G$ and $\dot \G^\top$, and $Y(S^*)=A^*$. This shows that $\dot
\Pi$ is a boundary triplet for $\{S,S\}$.

On the other hand, from the first equality in \eqref{5.39.2} one obtains
$$
S_0:=\ker \dot\G_0=Y^{-1}(A_0)=(A_0-\l_0)^{-1},
$$
so that $S_0\in [\gH]$. Moreover, for each $n\in\gN_S(=\gH\ominus\dom S)$ one
has $Y \{0,n\}=\{n,\l_0 n\}$ and hence
$$ \dot\G_0\{0,n\}=\G_0 \{n,\l_0 n\}=\G_0^\top \{n,\l_0
n\}=\dot\G_0^\top \{0, n\}.
$$
Therefore, the triplet $\dot\Pi$ belongs to the class $BT_\infty$.

(iii) If $z\neq 0, \; z\in\rho (S)$ and $f\in\gN_z (S)$, then by \eqref{5.39.2}
\begin{gather*}
\dot\G_0 \{f,zf\}=\G_0 Y\{f,zf\}=z\G_0\{f,(\l_0+\tfrac 1 z)f\},\\
 \dot\G_1\{f,zf\}=-z \G_1\{f,(\l_0+\tfrac 1 z)f\}.
\end{gather*}
Combining these equalities with the definition \eqref{2.12} of the Weyl
function yields  formula \eqref{5.39.3} for $M_{\dot\Pi}(\cdot)$.
\end{proof}

Now we are ready to prove the main theorems in this section which improve
Theorem \ref{th5.4} for the case of operators $A_0^{(j)}$ with a real regular
point.
\begin{theorem}\label{th5.10}
Let $\gH_j$ be a Hilbert space, let $A^{(j)}\in [\cD_j,\gH_j]$ be a bounded
simple symmetric operator with the closed domain $\cD_j\subset \gH_j$ and let
$\Pi_j=\{\cH\oplus\cH,\G^{(j)},\G^{\top (j)} \}\in BT_\infty$ be a boundary
triplet for $\{A^{(j)},A^{(j)}\}$ with the mappings
$$
\G^{(j)}=\begin{pmatrix} \G_0^{(j)}\cr \G_1^{(j)}\end{pmatrix}:A^{(j)*}\to
\cH\oplus\cH, \quad  \G^{\top (j)}=\begin{pmatrix} \G_0^{\top(j)}\cr \G_1^{\top
(j)}\end{pmatrix}:A^{(j)*}\to \cH\oplus\cH.
$$
Moreover, let $A^{(j)}_0 (=\ker \G_0^{(j)})$ be a bounded non-self-adjoint
operator  and let $M_{\Pi_j}(\cd)$ be the corresponding Weyl function, $j\in
\{1,2\}$. If for some $R>0$,
  \begin {equation}\label{5.40}
M_{\Pi_1}(z)=M_{\Pi_2}(z), \quad |z|>R,
 \end{equation}
then the boundary triplets $\Pi_1$ and $\Pi_2$ are unitarily equivalent.
 \end{theorem}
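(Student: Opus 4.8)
The plan is to read the hypothesis as a uniqueness statement for a transfer function and then build the intertwining unitary by hand. Since $\Pi_j\in BT_\infty$, Definition~\ref{def5.7} (see \eqref{5.31}) gives $\g_{\Pi_j}=\g_{(\Pi_j)_\top}=:\g_j$, so Proposition~\ref{pr2.14}(ii) yields $M_{\Pi_j}(z)=\cF_{\Pi_j}+\g_j^*(A_0^{(j)}-z)^{-1}\g_j$. As $A_0^{(j)}$ is bounded, both functions admit Laurent expansions $\cF_{\Pi_j}-\sum_{n\ge0}z^{-n-1}\g_j^*(A_0^{(j)})^n\g_j$ valid for $|z|$ large; comparing coefficients in \eqref{5.40} forces $\cF_{\Pi_1}=\cF_{\Pi_2}=:\cF$ and, writing $m_n:=\g_j^*(A_0^{(j)})^n\g_j$, makes every $m_n$ independent of $j$. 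In particular $m_0=\g_j^*\g_j$ is the same invertible positive operator. Next, because $A_0^{(j)}$ is a \emph{bounded} extension of $A^{(j)}$, the representation \eqref{2.15} (with $A_{00}=A_{00}^*$) shows that $A_0^{(j)}-A_0^{(j)*}$ vanishes on $\cD_j$ and maps into $\gN_j=\ran\g_j$; hence $\im A_0^{(j)}=\g_j G_j\g_j^*$ for a self-adjoint $G_j\in[\cH]$, and from $\im m_1=\g_j^*(\im A_0^{(j)})\g_j=m_0G_jm_0$ I get $G_j=m_0^{-1}(\im m_1)m_0^{-1}=:G$, again independent of $j$. This gives the crucial identity
\begin{equation*}
A_0^{(j)*}=A_0^{(j)}-2i\,\g_j G\g_j^*,\qquad j\in\{1,2\},
\end{equation*}
which is exactly what compensates for $A_0^{(j)}$ not being self-adjoint.

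On $\cL_j:=\mathrm{span}\{(A_0^{(j)})^n\g_j h:\,n\ge0,\ h\in\cH\}$ I define $U$ by $(A_0^{(1)})^n\g_1 h\mapsto(A_0^{(2)})^n\g_2 h$. To see $U$ is well defined and isometric it suffices to match Gram matrices: rewriting $\langle(A_0^{(j)})^n\g_j h,(A_0^{(j)})^m\g_j h'\rangle=\langle(A_0^{(j)*})^m(A_0^{(j)})^n\g_j h,\g_j h'\rangle$ and substituting the identity above, each resulting monomial, once sandwiched between $\g_j^*(\cdot)\g_j$, collapses into a product $m_{a_0}Gm_{a_1}G\cdots Gm_{a_k}$ of the common data $\{m_n\}$ and $G$. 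Thus the two Gram matrices coincide, so $U$ is a well-defined isometry of $\cL_1$ onto $\cL_2$ with $UA_0^{(1)}=A_0^{(2)}U$ and $U\g_1=\g_2$ on the respective spans. Density of $\cL_j$ comes from simplicity: since $\gN_j=\ran\g_j\subset\overline{\cL_j}$, the subspace $\cM_j:=\gH_j\ominus\overline{\cL_j}$ lies in $\cD_j$ and reduces $A^{(j)}$ to a bounded self-adjoint part, whence $\cM_j=\{0\}$ by simplicity (equivalently, $\g_{\Pi_j}(z)=(z-A_0^{(j)})^{-1}\g_j$ by Lemma~\ref{lem5.8}, so the defect subspaces lie in $\overline{\cL_j}$ and span $\gH_j$). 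Therefore $U$ extends to a unitary $U\in[\gH_1,\gH_2]$ with $UA_0^{(1)}=A_0^{(2)}U$ and $U\g_1=\g_2$.

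It remains to promote this to unitary equivalence of the triplets. From $U\g_1=\g_2$ I obtain $U\gN_1=\gN_2$, hence $UP_{\gN_1}=P_{\gN_2}U$, $\g_2^{-1}U=\g_1^{-1}$ on $\gN_1$, and $\g_2^*U=\g_1^*$; moreover $UA_0^{(1)*}=A_0^{(2)*}U$ and $\cF_{\Pi_1}=\cF_{\Pi_2}$. Using the graph decomposition \eqref{A*}, $\wt U:=U\oplus U$ carries $A_0^{(1)}$ to $A_0^{(2)}$ and $\wh\gN_1$ to $\wh\gN_2$, so $\wt U A^{(1)*}=A^{(2)*}$. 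Finally, inserting all these relations into the explicit formulas \eqref{5.32}--\eqref{5.33} of Lemma~\ref{lem5.8} for $\G_k^{(j)}$ and $\G_k^{\top(j)}$ gives $\G^{(2)}\wt U\up A^{(1)*}=\G^{(1)}$ and, in the same way, $\G^{\top(2)}\wt U\up A^{(1)*}=\G^{\top(1)}$. By Definition~\ref{def2.6} this is precisely the unitary equivalence of $\Pi_1$ and $\Pi_2$.

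The main obstacle is the non-self-adjointness of $A_0^{(j)}$: the transfer-function data determine only the one-sided moments $\g_j^*(A_0^{(j)})^n\g_j$, which in general do not control the mixed inner products $\langle(A_0^{(j)})^n\g_j h,(A_0^{(j)})^m\g_j h'\rangle$ that an isometry must preserve. The subordination identity $\im A_0^{(j)}=\g_j G\g_j^*$ is exactly what expresses every such mixed product through the common data, and its derivation together with the density in the second step is where the assumptions $\Pi_j\in BT_\infty$ and the simplicity of $A^{(j)}$ are indispensable.
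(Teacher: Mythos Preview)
Your proof is correct but follows a genuinely different route from the paper.

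The paper's argument is structural: having matched $\cF_{\Pi_1}=\cF_{\Pi_2}$ and the compressed resolvents $\g_j^*(A_0^{(j)}-z)^{-1}\g_j$, it introduces an auxiliary \emph{ordinary} boundary triplet $\wt\Pi_j=\{\cH,\wt\G_0^{(j)},\wt\G_1^{(j)}\}$ built from the self-adjoint extension $\wt A_0^{(j)}$ obtained by setting the corner $B_j$ in \eqref{2.15} to zero. The Frobenius formula for the $2\times2$ block $A_0^{(j)}$ converts the compressed resolvent identity into $\wt B_1-\wt M_1(z)=\wt B_2-\wt M_2(z)$ with $\wt B_j=\g_j^{-1}B_j\g_j^{-1*}$, from which $\wt B_1=\wt B_2$ and $\wt M_1\equiv\wt M_2$ follow at once. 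Theorem~\ref{th2.8} then supplies a unitary $U$ intertwining $\wt\Pi_1$ and $\wt\Pi_2$, and a direct calculation shows $\G^{(j)}=X\wt\G^{(j)}$ for a single matrix $X$ independent of $j$, transporting the equivalence to $\Pi_1,\Pi_2$.

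Your argument instead builds $U$ by hand from the moment sequence $m_n=\g_j^*(A_0^{(j)})^n\g_j$, exploiting the quasi-self-adjoint relation $A_0^{(j)*}=A_0^{(j)}-2i\g_jG\g_j^*$ to reduce every mixed inner product $\langle (A_0^{(j)})^n\g_jh,(A_0^{(j)})^m\g_jh'\rangle$ to a noncommutative polynomial in the $m_n$ and $G$, hence independent of $j$. This is the realization-theoretic route alluded to in Remark~\ref{rem5.13}: it is the uniqueness of minimal realizations of a transfer function, upgraded to \emph{unitary} similarity by the subordination $\ran(\im A_0^{(j)})\subset\ran\g_j$. Compared with the paper, your proof is self-contained (it does not invoke Theorem~\ref{th2.8} or the Schur/Frobenius complement) and makes explicit why the $BT_\infty$ hypothesis is exactly what forces the Gram matrices to match; the paper's proof, on the other hand, stays within the boundary-triplet formalism and obtains $\wt B_1=\wt B_2$ and $\wt M_1=\wt M_2$ with essentially no computation, delegating the construction of $U$ to the already established Theorem~\ref{th2.8}.
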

\begin{proof}
Let $\gN_j=\gH_j\ominus \cD_j$ and let $\g_{\Pi_j}\in [\cH,\gN_j]$,
$\g_{\Pi_\top,j}\in [\cH,\gN_j]$ and $\cF_{\Pi_j}\in [\cH]$ be the operators in
\eqref{2.16} corresponding to the triplet $\Pi_j$, $j\in \{1,2\}$. Then
according to Definition \ref{def5.7} $\g_{\Pi_j}=\g_{\Pi_\top,j}=:\g_j$ and
hence \eqref{2.17} yields
\begin {equation}\label{5.41}
M_{\Pi_j}(z)=\cF_{\Pi_j}+\g_j^* (A^{(j)}_0-z)^{-1}\g_j, \quad z\in \rho
(A^{(j)}_0), \;\; j\in \{1,2\}.
\end{equation}
Clearly, $\cF_{\Pi_j}=\lim\limits_{z\to \infty}M_{\Pi_j}(z)$ and now it follows
from \eqref{5.40} that
\begin{gather}
\cF_{\Pi_1}=\cF_{\Pi_2}=:\cF,\label{5.42}\\
\g_1^*(P_{\gN_1} (A^{(1)}_0-z)^{-1}\up\gN_1)\g_1=\g_2^*(P_{\gN_2}
(A^{(2)}_0-z)^{-1}\up\gN_2)\g_2, \quad |z|>R.\label{5.43}
\end{gather}

Let $\wt A^{(j)}_0 \in [\gH_j]$ be a self-adjoint extension of  $A^{(j)}$
defined by
\begin {equation}\label{5.44}
\wt A^{(j)}_0=\begin{pmatrix} A_{00}^{(j)} &  A_{10}^{(j)*} \cr A_{10}^{(j)} &
0 \end{pmatrix}:\cD_j\oplus\gN_j\to \cD_j\oplus\gN_j, \quad j\in \{1,2\},
\end{equation}
(i.e., by \eqref{2.15} with $B=0$) and define the mappings $\wt \G_0^{(j)},\;
\wt \G_1^{(j)}:A^{(j)*}\to \cH, \;\;j\in \{1,2\}, $ by
\begin {equation}\label{5.45}
\wt \G_0^{(j)}\{f,f'\}=\g_j^* P_{\gN_j}f, \quad \wt
\G_1^{(j)}\{f,f'\}=\g_j^{-1}(f'- \wt A^{(j)}_0 f), \quad \{f,f'\}\in A^{(j)*}.
\end{equation}
It follows from \cite[Proposition 3.5]{DM95} that the collection
$\wt\Pi_j=\{\cH,\wt \G_0^{(j)}, \wt \G_1^{(j)}\}$ is an ordinary boundary
triplet for $A^{(j)*}$ with the Weyl function
\begin {equation}\label{5.46}
\wt M_j(z)=\g_j^{-1}(z I + A_{10}^{(j)}(A_{00}^{(j)}-z)^{-1}A_{10}^{(j)*})
\g_j^{-1*}, \quad z \in \rho (A_{00}^{(j)}), \quad j\in \{1,2\}.
\end{equation}

Next, in view of \eqref{2.15} the operator $ A_{0}^{(j)}$ has the block
representation
\begin {equation}\label{5.47}
 A^{(j)}_0=\begin{pmatrix} A_{00}^{(j)} &  A_{10}^{(j)*} \cr A_{10}^{(j)} &
B_j \end{pmatrix}:\cD_j\oplus\gN_j\to \cD_j\oplus\gN_j, \quad j\in \{1,2\}
\end{equation}
with some $B_j\in [\gN_j]$. Applying the Frobenius formula to \eqref{5.47} and
taking \eqref{5.46} into account  one gets
\begin {gather*}
P_{\gN_j} (A^{(j)}_0-z)^{-1}\up\gN_j=\left (B_j-z I - A_{10}^{(j)}
(A_{00}^{(j)}-z)^{-1}A_{10}^{(j)*} \right  )^{-1}  \\
=  \left( B_j - \g_j\wt M_j(z)\g_j^* \right )^{-1}=\g_j^{-1*} \left (\g_j^{-1}
B_j \g_j^{-1*} - \wt M_j(z)\right )^{-1}\g_j^{-1}, \quad j\in \{1,2\}.
\end{gather*}
Substituting these identities into \eqref{5.43} yields the equality
\begin{equation}\label{5.48}
\wt B_1 - \wt M_1 (z)=\wt B_2 - \wt M_2 (z), \quad |z|>R,
\end{equation}
where $\wt B_j=\g_j^{-1} B_j \g_j^{-1*}(\in [\cH]) $. It follows easily from
\eqref{5.48} and \eqref{5.46} that
\begin{gather}
\wt B_1=\wt B_2=:\wt B, \label{5.49}\\
\wt M_1 (z)=\wt M_2 (z), \quad z\in\bC\setminus \bR, \label{5.50}
\end{gather}
(cf. Corollary~\ref{cor5.0}(ii)). On the other hand, the equality \eqref{5.32}
together with \eqref{5.44} and \eqref{5.47} gives
\begin {gather*}
\G_0^{(j)}\{f,f'\}= \g_j^{-1}(f'-  A^{(j)}_0 f)=\g_j^{-1}(f'- \wt A^{(j)}_0 f)
-\g_j^{-1} B_j P_{\gN_j}f \\
 =\g_j^{-1}(f'- \wt A^{(j)}_0 f)- \wt B_j \g_j^*P_{\gN_j}f,\\
\G_1^{(j)}\{f,f'\}=-\g_j^*P_{\gN_j}f+ \cF_{\Pi_j}\g_j^{-1}(f'- \wt A^{(j)}_0
f)-\cF_{\Pi_j}\wt B_j \g_j^*P_{\gN_j}f \\
 =(-I_\cH-\cF_{\Pi_j}\wt B_j)\g_j^*P_{\gN_j}f+\cF_{\Pi_j}\g_j^{-1}(f'- \wt
A^{(j)}_0 f).
\end{gather*}
Comparing these equalities with \eqref{5.45} and taking \eqref{5.42} and
\eqref{5.49} into account we obtain for $j\in \{1,2\}$ and all $\{f,f'\}\in
A^{(j)*}$,
\begin {gather*}
\G_0^{(j)}\{f,f'\}=\wt\G_1^{(j)}\{f,f'\}-\wt B\wt\G_0^{(j)}
\{f,f'\}, \\ 
\G_1^{(j)}\{f,f'\}=(-I_\cH-\cF\wt B)\wt\G_0^{(j)} \{f,f'\}+ \cF
 \wt\G_1^{(j)}\{f,f'\}. 
\end{gather*}
With $X=\begin{pmatrix} -\wt B & I_\cH \cr -I_\cH-\cF\wt B & \cF
\end{pmatrix}$ and $\wt \G^{(j)}=\begin{pmatrix}\wt \G_0^{(j)} \cr  \wt\G_1^{(j)}
\end{pmatrix}$
one can rewrite the previous two equalities in the form
  \begin {equation}\label{5.53}
\G^{(j)}=X\wt \G^{(j)}, \quad j\in \{1,2\}.
\end{equation}

It follows from \eqref{5.50} and Theorem \ref{th2.8} that the ordinary boundary
triplets $\wt\Pi_1$ and $\wt\Pi_2$ are unitarily equivalent, that is
\begin {equation*}
 \wt U(A^{(1)*})=A^{(2)*} \;\;\;\text{and} \;\;\;
 \wt \G^{(2)} \wt U\up A^{(1)*}=\wt \G^{(1)}.
\end{equation*}
with some unitary operator $\wt U\in [\gH_1,\gH_2]$. This and \eqref{5.53}
prove the unitary equivalence of the triplets $\Pi_1$ and $\Pi_2$.
\end{proof}
\begin{theorem}\label{th5.11}
Let $A^{(j)}$ be a, not necessarily densely defined, simple symmetric operator
in $\gH_j$, let $\l_0=\ov\l_0\in \wh\rho (A^{(j)})$, and let
$\Pi_j=\{\cH\oplus\cH,\G^{(j)},\G^{\top (j)} \}\in BT_{\l_0}$ be a boundary
triplet for $\{A^{(j)},A^{(j)}\}$ with the Weyl function $M_{\Pi_j}(\cd)$,
$j\in \{1,2\}$. If for some $\varepsilon>0$,
\begin {equation}\label{5.54}
M_{\Pi_1}(z)=M_{\Pi_2}(z), \quad |z-\l_0|<\varepsilon,
\end{equation}
then the boundary triplets $\Pi_1$ and $\Pi_2$ are unitarily equivalent.
\end{theorem}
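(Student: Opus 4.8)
The plan is to reduce Theorem~\ref{th5.11} to the already established Theorem~\ref{th5.10} by means of the Cayley-type transform $Y$ of Lemma~\ref{lem5.9}, which turns a triplet of class $BT_{\l_0}$ into one of class $BT_\infty$. Since all the analytic work (the Poisson-transform arguments of the Basic Lemma and the block-matrix computation) is already carried out in Lemma~\ref{lem5.9} and Theorem~\ref{th5.10}, what remains is to set up the reduction and, crucially, to transport the resulting unitary equivalence back through $Y$.

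First I would apply Lemma~\ref{lem5.9} separately to $\Pi_1$ and $\Pi_2$. As $\l_0=\ov\l_0\in\wh\rho(A^{(j)})$, this produces bounded symmetric operators $S^{(j)}:=(A^{(j)}-\l_0)^{-1}$ in $\gH_j$ together with boundary triplets $\dot\Pi_j=\{\cH\oplus\cH,\dot\G^{(j)},\dot\G^{\top(j)}\}\in BT_\infty$ for $\{S^{(j)},S^{(j)}\}$ defined by \eqref{5.39.2}, satisfying $S_0^{(j)}:=\ker\dot\G_0^{(j)}=(A_0^{(j)}-\l_0)^{-1}$ and $M_{\dot\Pi_j}(z)=-M_{\Pi_j}(\l_0+\tfrac1z)$ by \eqref{5.39.3}. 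The hypothesis \eqref{5.54} then transfers: substituting $w=\l_0+\tfrac1z$, the disc $|w-\l_0|<\varepsilon$ corresponds to the exterior region $|z|>R$ with $R:=1/\varepsilon$ (which lies in $\rho(S_0^{(j)})$ for $R$ large since $S_0^{(j)}$ is bounded), so that $M_{\dot\Pi_1}(z)=-M_{\Pi_1}(\l_0+\tfrac1z)=-M_{\Pi_2}(\l_0+\tfrac1z)=M_{\dot\Pi_2}(z)$ for $|z|>R$, which is exactly the condition \eqref{5.40}.

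Next I would verify that the transformed data meet the hypotheses of Theorem~\ref{th5.10}. Boundedness of $S^{(j)}$ is Lemma~\ref{lem5.9}(i), and $S_0^{(j)}\in[\gH_j]$ because $\l_0\in\rho(A_0^{(j)})$ by the definition of $BT_{\l_0}$; moreover $S_0^{(j)}$ is non-self-adjoint precisely when $A_0^{(j)}$ is, so the standing non-self-adjointness required in Theorem~\ref{th5.10} is inherited. The only point requiring genuine work is the simplicity of $S^{(j)}$: from $Y(S^{(j)*})=A^{(j)*}$ one computes that the defect subspaces are preserved, $\gN_w(S^{(j)})=\gN_{\l_0+1/w}(A^{(j)})$ as subspaces of $\gH_j$, so the closed linear span of the defect subspaces of $S^{(j)}$ coincides with that of $A^{(j)}$ and fills $\gH_j$ by simplicity of $A^{(j)}$; hence $S^{(j)}$ is simple. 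Theorem~\ref{th5.10}, applied with $\dot\Pi_j,\,S^{(j)},\,S_0^{(j)}$ in place of $\Pi_j,\,A^{(j)},\,A_0^{(j)}$, then yields a unitary $U\in[\gH_1,\gH_2]$ with $\wt U S^{(1)*}=S^{(2)*}$ and $\dot\G^{(2)}\wt U\up S^{(1)*}=\dot\G^{(1)}$, where $\wt U=U\oplus U$.

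Finally, I would transport this equivalence back from $\dot\Pi_j$ to $\Pi_j$. The key observation is that the map $Y$ of \eqref{5.39.1} has the same block form $\left(\begin{smallmatrix}0&I\\ I&\l_0 I\end{smallmatrix}\right)$ on $\gH_1\oplus\gH_1$ and on $\gH_2\oplus\gH_2$, so writing $Y_j$ for the map on $\gH_j\oplus\gH_j$ one has $\wt U Y_1=Y_2\wt U$ and hence $\wt U Y_1^{-1}=Y_2^{-1}\wt U$. Since $A^{(j)*}=Y_j(S^{(j)*})$, this gives at once $\wt U A^{(1)*}=\wt U Y_1(S^{(1)*})=Y_2\wt U(S^{(1)*})=Y_2(S^{(2)*})=A^{(2)*}$. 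For the boundary mappings, take $\wh f\in A^{(1)*}$; using $\dot\G_0^{(1)}=\G_0^{(1)}Y_1\up S^{(1)*}$ together with $\dot\G_0^{(1)}=\dot\G_0^{(2)}\wt U$ and the commutation relation, one computes $\G_0^{(1)}\wh f=\dot\G_0^{(1)}(Y_1^{-1}\wh f)=\dot\G_0^{(2)}(\wt U Y_1^{-1}\wh f)=\dot\G_0^{(2)}(Y_2^{-1}\wt U\wh f)=\G_0^{(2)}(\wt U\wh f)$, and identically for $\G_1$ (the common sign $-1$ in \eqref{5.39.2} cancels). Thus $\G^{(2)}\wt U\up A^{(1)*}=\G^{(1)}$, which together with $\wt U A^{(1)*}=A^{(2)*}$ is precisely the unitary equivalence of $\Pi_1$ and $\Pi_2$ in the sense of Definition~\ref{def2.6}. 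I expect the main obstacle to be not any single analytic estimate---those are absorbed into Lemma~\ref{lem5.9} and Theorem~\ref{th5.10}---but the bookkeeping in this last back-transfer, namely exploiting $\wt U Y_1=Y_2\wt U$ to push the intertwining relations for $\dot\G^{(j)}$ through to $\G^{(j)}$, together with confirming that simplicity is preserved under the resolvent transform $S^{(j)}=(A^{(j)}-\l_0)^{-1}$.
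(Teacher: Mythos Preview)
Your proposal is correct and follows exactly the same route as the paper's own proof: reduce to Theorem~\ref{th5.10} via the Cayley-type transform of Lemma~\ref{lem5.9}, transfer the Weyl-function identity from a punctured disc at $\l_0$ to an exterior region $|z|>R$, and then pull the resulting unitary equivalence of $\dot\Pi_1,\dot\Pi_2$ back to $\Pi_1,\Pi_2$. In fact you are more careful than the paper in two places it leaves implicit: you verify that simplicity of $A^{(j)}$ passes to $S^{(j)}=(A^{(j)}-\l_0)^{-1}$ (needed to invoke Theorem~\ref{th5.10}), and you spell out the back-transfer via the commutation $\wt U\,Y_1=Y_2\,\wt U$, whereas the paper simply asserts that unitary equivalence of $\dot\Pi_1,\dot\Pi_2$ ``implies'' that of $\Pi_1,\Pi_2$.
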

\begin{proof}
Let $S^{(j)}=(A^{(j)}-\l_0)^{-1}$, let $\dot\Pi_j=\{\cH\oplus\cH, \dot\G^{(j)},
\dot\G^{\top (j)} \}\in BT_\infty$ be a boundary triplet for
$\{S^{(j)},S^{(j)}\}$ constructed in Lemma \ref{lem5.9} and let
$M_{\dot\Pi_j}(z)=-M_{\Pi_j} (\l_0+\tfrac 1 z)$ be the Weyl function for
$\dot\Pi_j$ (see \eqref{5.39.3}). Then in view of \eqref{5.54} for some $R>0$
one has
$$
M_{\dot\Pi_1}(z)=M_{\dot\Pi_2}(z), \quad |z|>R.
$$
Therefore according to Theorem \ref{th5.10} the triplets $\dot\Pi_1$ and
$\dot\Pi_2$ are unitarily equivalent,  which implies that the triplets $\Pi_1$
and $\Pi_2$ are unitarily equivalent as well.
\end{proof}
\begin{corollary}\label{cor5.12}
Let $\cH,\; \gH_1$ and $\gH_2$ be Hilbert spaces, let $\gN_j$ be a subspace of
$\gH_j$ and let $\wt A_j\in [\gH_j], \; F_j \in [\cH]$ and $K_j\in [\cH,\gH_j]$
be operators such that $\gN_j\supset \ran (\wt A_j-\wt A_j^*), \; \ker
K_j=\{0\}$ and $\ran K_j=\gN_j, \; j\in \{1,2\}$. Assume also that
\begin {equation}\label{5.56}
\ov{\rm {span}} \{\wt A_1^n\gN_1:n=0,1,\dots\}=\gH_1 \;\;\;\text{and} \;\;\;
\ov{\rm {span}} \{\wt A_2^n\gN_2:n=0,1,\dots\}=\gH_2.
\end{equation}
If under the above assumptions the equality
\begin {equation}\label{5.57}
F_1+K_1^*(\wt A_1-z)^{-1}K_1=F_2+K_2^*(\wt A_2-z)^{-1}K_2, \quad |z|>R,
\end{equation}
holds for some $R>0$, then there exists a unitary operator $U\in [\gH_1,\gH_2]$
such that
  \begin {equation}\label{5.58}
UK_1=K_2 \quad \text{and} \quad  U\wt A_1=\wt A_2 U.
  \end{equation}
\end{corollary}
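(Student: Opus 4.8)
The plan is to recognize the data $(\wt A_j,\gN_j,K_j,F_j)$ as encoding a boundary triplet of class $BT_\infty$ and then to reduce the assertion to Theorem \ref{th5.10}. First I would set $\cD_j:=\gH_j\ominus\gN_j$ and define $A^{(j)}:=\wt A_j\up\cD_j$. The hypothesis $\ran(\wt A_j-\wt A_j^*)\subset\gN_j$ guarantees that for $f,g\in\cD_j$ one has $((\wt A_j-\wt A_j^*)f,g)=0$, so that $A^{(j)}\in[\cD_j,\gH_j]$ is a bounded symmetric operator with $\gN_j=\gH_j\ominus\cD_j=\mul A^{(j)*}$, and $\wt A_j$ is a bounded extension of $A^{(j)}$. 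Since $K_j$ is a bounded injection with closed range $\gN_j$, it is an isomorphism of $\cH$ onto $\gN_j$; thus the converse part of Lemma \ref{lem5.8} (applied with $\wt A=\wt A_j$, $K=K_j$, $F=F_j$) produces a boundary triplet $\Pi_j=\{\cH\oplus\cH,\G^{(j)},\G^{\top(j)}\}\in BT_\infty$ for $\{A^{(j)},A^{(j)}\}$ with $A^{(j)}_0=\wt A_j$, $\gamma_j=K_j$ and $\cF_{\Pi_j}=F_j$. By Proposition \ref{pr2.14}(ii), together with $\gamma_{\Pi_j}=\gamma_{\Pi_\top,j}=K_j$, the corresponding Weyl function is $M_{\Pi_j}(z)=F_j+K_j^*(\wt A_j-z)^{-1}K_j$, so that \eqref{5.57} is precisely the equality $M_{\Pi_1}(z)=M_{\Pi_2}(z)$ for $|z|>R$.

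The main technical point, which I would treat next, is to verify that the cyclicity condition \eqref{5.56} forces $A^{(j)}$ to be \emph{simple}, as required by Theorem \ref{th5.10}. I would argue by contradiction: if $A^{(j)}$ were not simple, there would exist a nontrivial reducing subspace $\gH'\neq\{0\}$ on which $A^{(j)}$ acts as a bounded self-adjoint operator, whence $\gH'\subset\cD_j$ and $\gH'\perp\gN_j$. Writing $\wt A_j$ in block form relative to $\gH_j=\cD_j\oplus\gN_j$ with self-adjoint corner $A_{00}$, one checks that the off-diagonal block annihilates $\gH'$ (because $A^{(j)}$ maps $\gH'$ into $\gH'\subset\cD_j$), so that $\gH'$ is invariant under both $\wt A_j$ and $\wt A_j^*$. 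Consequently $\gH_j\ominus\gH'$ is $\wt A_j$-invariant and contains $\gN_j$, whence $\ov{\rm {span}}\{\wt A_j^n\gN_j:n=0,1,\dots\}\subset\gH_j\ominus\gH'\subsetneq\gH_j$, contradicting \eqref{5.56}. I expect this reducing-subspace argument to be the only genuinely substantive step.

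Having established simplicity and the coincidence of the Weyl functions, Theorem \ref{th5.10} yields a unitary $U\in[\gH_1,\gH_2]$ with $\wt U A^{(1)*}=A^{(2)*}$ and $\G^{(2)}\wt U\up A^{(1)*}=\G^{(1)}$, where $\wt U=U\oplus U$ (see \eqref{2.8}). The final step is to decode these relations into \eqref{5.58}. Since $A^{(j)}_0=\ker\G_0^{(j)}=\wt A_j$ are bounded operators sharing the boundary relation $\Theta_0$, Lemma \ref{lem2.7} gives $\wt U\wt A_1=\wt A_2$, i.e.\ $U\wt A_1=\wt A_2 U$. For the intertwining of the $K_j$ I would use that $\wt U\wh\gN_1=\wh\gN_2$ (a consequence of $\wt U A^{(1)*}=A^{(2)*}$, which forces $U\gN_1=\gN_2$) together with $\G_0^{(2)}\wt U=\G_0^{(1)}$ on $A^{(1)*}$: for $h\in\cH$, picking the unique $\wh n=\{0,n\}\in\wh\gN_1$ with $\G_0^{(1)}\wh n=h$, one has $\G_0^{(2)}\wt U\wh n=h$, so by \eqref{2.16} $K_2 h=\pi_2\wt U\wh n=U\pi_2\wh n=U K_1 h$, giving $U K_1=K_2$. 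The remaining verifications are bookkeeping resting on Lemma \ref{lem5.8}, Proposition \ref{pr2.14}, Theorem \ref{th5.10} and Lemma \ref{lem2.7}.
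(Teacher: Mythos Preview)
Your proposal is correct and follows essentially the same approach as the paper: define $A^{(j)}:=\wt A_j\up\cD_j$, invoke Lemma~\ref{lem5.8} to build $\Pi_j\in BT_\infty$ with Weyl function $F_j+K_j^*(\wt A_j-z)^{-1}K_j$, and apply Theorem~\ref{th5.10}. The paper merely asserts that \eqref{5.56} implies simplicity and that the unitary equivalence of triplets ``in particular'' yields \eqref{5.58}, so your reducing-subspace argument and your decoding of $UK_1=K_2$, $U\wt A_1=\wt A_2U$ are welcome elaborations rather than a different route.
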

    \begin{proof}
Let $\cD_j=\gH_j\ominus\gN_j, \; j\in \{1,2\}$. Since $\cD_j \subset \ker (\wt
A_j-\wt A_j^*)$, the operator $A^{(j)}:=\wt A_j\up \cD_j$ is symmetric  in
$\gH_j$ with the closed domain $\cD_j, \;j\in \{1,2\} $. Moreover, the
relations \eqref{5.56} imply the simplicity of the operators $A^{(1)}$ and
$A^{(2)}$.

Letting in \eqref{5.34} and \eqref{5.35} $K=K_j, \; \wt A=\wt A_j$ and $F=F_j$
we construct the boundary triplet $\Pi_j=\{\cH\oplus\cH, \G^{(j)},\G^{ \top(j)}
\} \in BT_\infty $ for $\{A^{(j)}, A^{(j)}\}$ such that the corresponding  Weyl
function is
   \begin {equation*}\label{5.59}
M_{\Pi_j}(z)=F_j+K_j^*(\wt A_j-z)^{-1}K_j, \quad j\in \{1,2\}
   \end{equation*}
(see Proposition \ref{pr2.14}). Since by \eqref{5.57} $M_{\Pi_1}(z)=
M_{\Pi_2}(z), \; |z|>R, $ it follows from Theorem \ref{th5.10} that the
triplets $\Pi_1$ and $\Pi_2$ are unitarily equivalent by means of a unitary
operator $U\in [\gH_1, \gH_2]$. In particular, this yields the relations in
\eqref{5.58}.
  \end{proof}

\begin{remark}\label{rem5.13}
(i) In the case that $\cH_j=\gN_j$, $F_j=0$ and $K_j=I_{\gN_j}$, $j\in
\{1,2\}$, the equality \eqref{5.57} takes the form
$$
P_{\gN_1}(\wt A_1-z)^{-1}\up \gN_1=P_{\gN_2}(\wt A_2-z)^{-1}\up \gN_2, \quad
|z|>R.
$$
For this case the statement of Corollary \ref{cor5.12} was proved in
\cite[Theorem~6.2]{ArlHasSno05}.

(ii) As it is known (see for instance \cite{AroNud02}) a linear stationary
dynamical discrete-time system (LSDS) is a collection
\begin {equation}\label{5.60}
\alpha =\{\wt A, K,N,F; \gH,\cH_0, \cH_1\}
\end{equation}
of Hilbert spaces $\gH,\;\cH_0, \; \cH_1$, and operators $\wt A \in [\gH], \;
K\in [\cH_0,\gH], \; N \in [\cH_1,\gH]$ and $F\in [\cH_0,\cH_1]$. Moreover, the
operator function
\begin {equation}\label{5.61}
\Theta_\alpha (z)=F+ N^*(\wt A -z)^{-1}K, \quad z\in\rho (\wt A),
\end{equation}
is called the transfer function of the system $\alpha$.

Two systems $\alpha_j =\{\wt A_j, K_j,N_j,F_j; \gH_j,\cH_0, \cH_1\}, \; j\in
\{1,2\}$, are called similar (unitarily similar) if there is an operator $U \in
[\gH_1, \gH_2] $ with $0\in\rho (U)$ (resp. unitary $U$) such that
$$
U\wt A_1=\wt A_2 U, \quad U K_1=K_2, \quad U N_1=N_2.
$$
Some sufficient conditions for similarity and unitary similarity of $LSDS$ with
the same transfer function has been discovered in \cite{Aro79,AroNud02} (for
systems of other types see \cite{AroNud96}). For systems involving normal main
operators $\wt A_j$ some sufficient conditions for their unitary similarity
have been obtained in \cite{ArlHasSno07}.

If in addition the system \eqref{5.60} satisfies the conditions
\begin{gather}
\cH_0=\cH_1=:\cH, \quad N=K,\quad 0\in \wh \rho (K), \quad \ran K \supset \ran
(\wt A-\wt A^*), \label{5.62}\\
\ov{\rm {span}} \{\wt A^n K \cH:n=0,1,\dots\}=\gH\label{5.63}
\end{gather}
(the relation \eqref{5.63} means that the system $\alpha$ is simple in the
sense of \cite{Aro79,AroNud02}), then the transfer function takes the form
\begin {equation}\label{5.64}
\Theta_\alpha (z)=F+ K^*(\wt A -z)^{-1}K, \quad z\in\rho (\wt A)
\end{equation}
and in view of Lemma \ref{lem5.8} and Proposition \ref{pr2.14} $\Theta_\alpha
(\cd)$ is the Weyl function corresponding to some boundary triplet $\Pi\in
BT_\infty$. Moreover, it follows from \eqref{5.64} and Corollary \ref{cor5.12}
that the transfer function defines the system \eqref{5.60} satisfying
\eqref{5.62} and \eqref{5.63} uniquely up to unitary similarity.

In \cite{ArlHasSno07} passive systems of the form \eqref{5.62} (i.e. the
associated $2\times 2$ block operator with entries $\wt A, K^*,K$, and $F$ is
contractive)
are called passive quasi-selfadjoint systems, or shortly $pqs$-systems. For
such systems Corollary \ref{cor5.12} was proved in another way in
\cite[Proposition~4.3]{ArlHasSno07}, see also \cite[Theorem~3.5]{ArlHasSno07}
for an extension of this result where normal main operators $\wt A_j$ are
allowed.
\end{remark}

\section{Negative results}

       \begin{theorem}\label{thm6B}
Let $A^{(1)}$ be a simple symmetric operator in  $\gH_1$, let
$\Pi_1=\{\cH,\G_0^{(1)},\G_1^{(1)}\}$ be an ordinary boundary triplet for
$A^{(1)*}$ and   $M_1(\cdot)$ the corresponding Weyl function. If  $B_1\in
[\cH]$ and $0\in\rho(M_1(z_0)-B_1)$ for some $z_0\in \dC_+$, then the following
statements hold:

\begin{enumerate}\def\labelenumi{\rm (\roman{enumi})}

\item    There exists a (non-unique) simple symmetric operator
$A^{(2)}$, an ordinary boundary triplet $\Pi_2=\{\cH,\G_0^{(2)},\G_1^{(2)}\}$
for $A^{(2)*}$, a (non-unique) bounded dissipative operator $B_2$, and an open
neighborhood $\Omega_+\subset \dC_+$ of $z_0$, such that the following
equality holds
   \begin{equation}\label{inveq1}
 (B_1-M_1(z))^{-1}=(B_2-M_2(z))^{-1}, \quad z\in \Omega_+.
   \end{equation}

\item  The boundary triplets $\wh\Pi_{I,B_1}$ and $\wh\Pi_{I,
B_2}$ defined in Lemma \ref{lem5.3} are not unitarily equivalent.

\item  If $B_1$ is not dissipative, the extensions
\begin {equation}\label{6.0}
 A_{B_1}^{(1)} = \ker(\G_1^{(1)}-B_1\G_0^{(1)})\;\;\;\;\text{and} \;\;\;\;
 A_{B_2}^{(2)} = \ker(\G_1^{(2)}-B_2\G_0^{(2)})
\end{equation}
are not unitarily similar.
\end{enumerate}

If in addition $B_1$ is accumulative and $B_1\neq B_1^*$, then
$0\in\rho(M_1(z_0)-B_1)$ for every  $z_0\in \dC_+$ and the statement (i) holds
true with $\Omega_+=\bC_+$. In particular, the extensions $A_{B_1}^{(1)}$ and
$A_{B_2}^{(2)}$  are not unitarily similar.

\end{theorem}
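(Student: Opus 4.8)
The plan is to deduce all parts from a single construction: realize a constant perturbation of $M_1(\cdot)$ as the Weyl function of a second simple symmetric operator. For (i), note that the target equality \eqref{inveq1} is equivalent to $B_2-M_2(z)=B_1-M_1(z)$, i.e.\ $M_2(z)=M_1(z)+(B_2-B_1)$ on $\Omega_+$. Adding a non-self-adjoint constant to $M_1$ destroys the symmetry $M^*(z)=M(\bar z)$, so instead I would fix $C\in[\cH]$ with $\im C\geq 0$ and $\im C\neq 0$ (for concreteness $C=i(\|\im B_1\|+1)I$) and set
\begin{equation*}
M_2(z):=M_1(z)+C,\ z\in\bC_+,\qquad M_2(z):=M_1(z)+C^*,\ z\in\bC_-.
\end{equation*}
A direct check gives $M_2^*(z)=M_2(\bar z)$ and $\im z\cdot\im M_2(z)\geq 0$ on $\bC\setminus\bR$, so $M_2\in R[\cH]$; since $\im M_2(i)=\im M_1(i)+\im C\geq\im M_1(i)$ is boundedly invertible, in fact $M_2\in R^u[\cH]$. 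By the realization theorem for uniformly strict Nevanlinna functions \cite{DM91,Mal92}, $M_2$ is the Weyl function of a simple symmetric operator $A^{(2)}$ with an ordinary boundary triplet $\Pi_2$. Putting $B_2:=B_1+C$ makes $\im B_2=\im B_1+\im C\geq I>0$, so $B_2$ is dissipative, while on a neighbourhood $\Omega_+\subset\bC_+$ of $z_0$ on which $B_1-M_1(z)$ stays invertible (such $\Omega_+$ exists by holomorphy and $0\in\rho(B_1-M_1(z_0))$) one has $B_2-M_2(z)=B_1-M_1(z)$, giving \eqref{inveq1}. The freedom in $C$ yields the asserted non-uniqueness of $A^{(2)}$ and $B_2$. (This realizes the heuristic $F_2=F_1\pm iI$ from the introduction.)

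For (ii) I would apply Lemma \ref{lem5.3} with $K=I$: the Weyl functions of $\wh\Pi_{I,B_1}$ and $\wh\Pi_{I,B_2}$ are $\wh M_j(z)=M_j(z)-\re B_j$. On $\bC_+$, using $M_2=M_1+C$ and $\re B_2=\re B_1+\re C$, one computes $\wh M_2(z)-\wh M_1(z)=C-\re C=i\,\im C\neq 0$. Since $A^{(1)}$ and $A^{(2)}$ are simple, Theorem \ref{th2.8} then shows $\wh\Pi_{I,B_1}$ and $\wh\Pi_{I,B_2}$ are not unitarily equivalent.

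For (iii) the key is the Green identity \eqref{2.3}: for $\wh f=\{f,f'\}\in A^{(1)}_{B_1}=\ker(\G_1^{(1)}-B_1\G_0^{(1)})$ it yields $\im(f',f)=(\im B_1\,\G_0^{(1)}\wh f,\G_0^{(1)}\wh f)$. If $B_1$ is not dissipative, choose $h$ with $(\im B_1 h,h)<0$; by surjectivity of $\G^{(1)}$ (taking the boundary values $(h,B_1h)$) there is $\wh f\in A^{(1)}_{B_1}$ with $\G_0^{(1)}\wh f=h$, whence $\im(f',f)<0$, so in particular $f\neq 0$ and $A^{(1)}_{B_1}$ is not dissipative; whereas $B_2$ dissipative forces $A^{(2)}_{B_2}$ dissipative. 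Since unitary similarity of relations preserves $(f',f)$ for unit vectors, dissipativity is an invariant, and the extensions \eqref{6.0} cannot be unitarily similar. Finally, if $B_1$ is accumulative and $B_1\neq B_1^*$, then $\im B_1\leq 0$ and $\im B_1\neq 0$, so $B_1$ is not dissipative and (iii) applies; moreover for every $z_0\in\bC_+$ one has $\im(M_1(z_0)-B_1)=\im M_1(z_0)-\im B_1\geq\im M_1(z_0)$, which is boundedly invertible because $M_1\in R^u[\cH]$, so $M_1(z_0)-B_1$ is boundedly invertible and $\Omega_+=\bC_+$ is admissible.

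The main obstacle I anticipate is the realization step in (i): one must verify carefully that the piecewise-defined $M_2$ is holomorphic on $\bC\setminus\bR$ and lies in $R^u[\cH]$, and then invoke the existence half of the inverse problem for Weyl functions to obtain the simple operator $A^{(2)}$ together with $\Pi_2$. A secondary point requiring care is the uniform positivity of $\im M_1(z)$ throughout $\bC_+$ (not merely at $z=i$) for uniformly strict Nevanlinna functions, which is what upgrades the accumulative case to $\Omega_+=\bC_+$.
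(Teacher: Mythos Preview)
Your proof is correct and follows essentially the same construction as the paper: add a dissipative constant to $M_1$ on $\bC_+$ (and its adjoint on $\bC_-$), realize the resulting $R^u[\cH]$-function as the Weyl function of a simple symmetric operator via the inverse problem of \cite{DM95}, set $B_2=B_1+(\text{constant})$, and then compare the auxiliary Weyl functions $\wh M_j=M_j-\re B_j$ for (ii) and use the correspondence between (non-)dissipativity of $B_j$ and of $A_{B_j}$ for (iii). Your computation $\wh M_2-\wh M_1=i\,\im C\neq 0$ in (ii) is in fact the right one; the paper imposes $\re B\neq 0$ at this step, but as your calculation shows the relevant condition is $\im B\neq 0$.
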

  \begin{proof}
(i)  Since $M_1(\cdot)$ belongs to $R[\cH]$ the assumption
$0\in\rho(M_1(z_0)-B_1)$ implies that there exists an open neighborhood
$\Omega_+\subset \dC_+$ of $z_0$, such that $0\in\rho(M_1(z)-B_1)$ for all
$z\in\Omega_+$. Now choose a dissipative operator $B\in[\cH]$ such that $\re
B\not = 0,$ and the sum $B_2:=B+B_1$ is also dissipative and define the
function $M_2(\cd)$ by
\begin{equation}\label{6B1}
 M_2(z):=M_1(z)+B, \quad z\in\dC_+;
 \quad
 M_2(z):=M_1(z)+B^*, \quad z\in\dC_-.
\end{equation}
Since $M_1(\cd)$ is the Weyl function corresponding to the ordinary boundary
triplet $\Pi_1,$  $M_1(\cd) \in R^u[\cH]$, i.e. $M_1(\cd) \in R[\cH]$ and
$0\in\rho(\im M(i))$. Combining this fact with the inequality  $\im B\ge 0$ we
get that $M_2(\cd) \in R^u[\cH]$. Therefore (see \cite{DM95}) there exists a
simple symmetric operator $A^{(2)}$ and an ordinary boundary triplet
$\Pi_2=\{\cH,\G_0^{(2)},\G_1^{(2)}\}$ for $A^{(2)*}$ such that the
corresponding Weyl function is equal to $M_2(\cd)$. By construction,
$B=B_2-B_1\neq 0$ is dissipative and \eqref{6B1} implies that
   \begin{equation}\label{6B1New}
M_2(z)-B_2 = M_1(z) - B_1,\qquad  z\in\dC_+.
  \end{equation}
Since  $0\in\rho(M_1(z)-B_1)$ for $z\in \Omega_+$, the last identity leads to
the inclusion   $0\in\rho(M_2(z)-B_2)$  for $z\in \Omega_+.$  Taking inverses
of both sides of \eqref{6B1New} yields  \eqref{inveq1}.

(ii)  By  Lemma  \ref{lem5.3},  a collection $\wh\Pi_{I,B_j}=
\{\cH,\wh\G_0^{(j)},\wh \G_1^{(j)}\},$  \ $j\in \{1,2\},$ with

   \begin{equation}\label{hatbt5}
\wh\Gamma_0^{(j)} =  \Gamma_0^{(j)},\quad
 \wh\Gamma_1^{(j)} = (\Gamma_1^{(j)} - (\re\,
 B_j)\Gamma_0^{(j)}),\qquad j\in \{1,2\},
   \end{equation}
defines an ordinary boundary triplet  for $A^{(j)*}$ such that the
corresponding Weyl function   is
     \begin{equation}\label{hatweyl5new}
\wh M_j(z) = M_j(z) - \re \,B_j, \qquad z\in \dC_+, \qquad j\in \{1,2\}.
  \end{equation}
According to our choice $\re B\not = 0,$ i.e.\  $\re B_1 \not = \re B_2.$ Thus,
by \eqref{hatweyl5new} $\wh M_1(\cdot)\not = \wh M_2(\cdot).$   By Theorem
\ref{th2.8}, the boundary triplets $\wh\Pi_{I,B_1}$ and $\wh\Pi_{I,B_2}$ are
not unitarily equivalent.

Note also that as it is clear from \eqref{6B1New},  $M_1(\cd)\neq M_2(\cd)$.
Therefore, by Theorem \ref{th2.8},  the boundary triplets $\Pi_1$ and $\Pi_2$
are not unitarily equivalent too.

(iii)  Since $\Pi_1$ and $\Pi_2$ are ordinary boundary triplets, Proposition
\ref{pr2.4}(ii) shows that the linear relation $A_{B_1}^{(1)}$ is not
dissipative, if so is $B_1$.  At the same time, by Proposition \ref{pr2.4}(ii),
$A_{B_2}^{(2)}$ is $m$-dissipative since, by construction, $B_2$ is
dissipative. In particular, $A_{B_1}^{(1)}$ and $A_{B_2}^{(2)}$ are not
unitarily equivalent.
%
\end{proof}

\begin{remark}\label{rem5}
\begin{enumerate}\def\labelenumi{\rm (\roman{enumi})}
\item
 Theorem~\ref{thm6B}, as well as its proof, remains valid for generalized
boundary triplets in the sense of \cite{DM95}; their unitary equivalence is
defined precisely in the same way as was done for ordinary boundary triplets in
Definition \ref{def2.6}.

\item
 To demonstrate that Theorem~\ref{thm6B} holds  for a wide class of
non-accumulative operators $B_1$, we fix $z_0\in \dC_+$ and recall that as a
Weyl function of an ordinary boundary triplet $M_1(\cd)\in R^u[\cH]$. The
latter  means that $\im (M_1(z_0)f,f)\ge \varepsilon_0 \|f\|^2$ for some
$\varepsilon_0>0$. Choose an arbitrary operator $B_1\in[\cH]$ (not necessarily
accumulative) which satisfies $\|\im B_1\|\le \varepsilon_0/2$. Then, clearly
\[
 \im ((M_1(z_0)-B_1)f,f)\ge \im (M_1(z_0)f,f)-|\im(B_1f,f)|\ge
 \frac{\varepsilon_0}{2}\, \|f\|^2.
\]
This implies that $0\in\rho(M_1(z_0)-B_1)$ and hence there exists an open
neighborhood $\Omega_+\subset \dC_+$ of $z_0$, such that $0\in\rho(M_1(z)-B_1)$
for all $z\in\Omega_+$. Again the operator $A_{B_1}$ is not dissipative, if
$B_1$ is not dissipative, and thus all the conclusions of Theorem~\ref{thm6B}
hold.

\item Let under the assumptions of Theorem \ref{thm6B} $n_\pm
(A^{(1)})=n<\infty$, so that $\dim \cH=n$. Then for any fixed $z_0\in \bC_+$
the inequality $\det (M_1(z_0)-B_1)\neq  0$ holds for almost every (with
respect to the Lebesgue measure in $\bC^{n^2}$) non-dissipative matrices
$B_1\in \Bbb C^{n\times n}$. Therefore in this  case the conclusions  of
Theorem \ref{thm6B} are valid for almost every non-dissipative matrix $B_1\in
\Bbb C^{n\times n}$.

\item\label{point4}
Reasonings of Theorem \ref{thm6B} can easily be extended to establish the
following statement:

\emph {There exist simple symmetric operators $A^{(j)}$, ordinary boundary
triplets $\Pi_j=\{\cH,\G_0^{(j)},\G_1^{(j)} \}$ for $A^{(j)*}$ with the
corresponding Weyl functions $M_j(\cd)$, $j\in \{1,2\}$, a selfadjoint operator
$B_1\in  [\cH]$ and an accumulative operator  $B_2\in  [\cH]$,
 such that}
\begin {equation*}
\wt M_1(z) := (B_1-M_1(z))^{-1}=(B_2-M_2(z))^{-1} =:\wt M_2(z), \quad z\in
\bC_+,
\end{equation*}

\noindent \emph{but the extensions \eqref{6.0} are not unitarily similar.}
   \end{enumerate}
This statement shows that even in the case of maximal accumulative extension
$\wt A_B$ the Weyl function $\wt M(\cd)$ of the form \eqref{5.23.3} does not
determine  the extension uniquely up to the unitary similarity.
\end{remark}
%
%

    \begin{example}
Let $\cH$ be a separable Hilbert space. Consider in $L^2({\Bbb R},\cH)$ the
momentum operator $A_0 = -i\frac{d}{dx},$ \ $\dom(A_0)= W^{1,2}({\Bbb R})$  and
its restriction
          \begin{equation*}
A = -i\frac{d}{dx},\quad \dom(A)=W^{1,2}_0({\Bbb R}_-)\oplus W^{1,2}_0({\Bbb
R}_+)=\{f\in W^{1,2}({\Bbb R}):f(0)=0\}.
   \end{equation*}
Clearly, $A$ is symmetric operator with equal deficiency indices $n_{\pm} =
\dim\cH.$

Define the boundary triplets $\Pi_j = \{\cH,\Gamma^{(j)}_0,\Gamma^{(j)}_1\},\
j\in\{1,2\}$, for $A^*$, by setting
    \begin{equation}\label{6.40}
\sqrt{2}\Gamma^{(1)}_0 f := [f(+0) - f(-0)], \quad \sqrt{2}\Gamma^{(1)}_1 f:=
i[f(+0) + f(-0)],
    \end{equation}
and
    \begin{equation} \label{6.41}
\Gamma^{(2)}_0 := 3^{-1/2}\Gamma^{(1)}_0, \quad
 \Gamma^{(2)}_1 := \sqrt{3}\Gamma^{(1)}_1.
         \end{equation}
It easily follows from \eqref{6.40}, \eqref{6.41} that the corresponding Weyl
functions $M_j(\cdot)$, $j\in\{1,2\},$ are
     \begin{equation}\label{6.42}
M_1(z)=
\begin{cases}
iI_{\cH}, & z\in{\Bbb C}_+,\\
-iI_{\cH},& z\in{\Bbb C}_-,
   \end{cases},\qquad
M_2(z)=
\begin{cases}
3iI_{\cH}, & z\in{\Bbb C}_+,\\
-3iI_{\cH},& z\in{\Bbb C}_-.
\end{cases}
     \end{equation}
Now, let $B_1 = -iI_{\cH}$ and $B_2 = iI_{\cH}$ and define the proper
extensions $A_{B_j}$ of $A$ by
      \begin{equation}\label{6.43}
A_{B_j} = A^*\lceil\dom A_{B_j}, \quad \dom A_{B_j} =  \ker( \Gamma_1^{(j)} -
B_j\Gamma_0^{(j)}),  \quad j\in \{1,2\}.
   \end{equation}
Then by Proposition \ref{pr2.4} $A_{B_1}$ is $m$-accumulative and $A_{B_2}$ is
$m$-dissipative, because so are $B_1$ and $B_2$. More precisely, by rewriting
$\dom A_{B_1}$ in \eqref{6.43} in the form
$\dom A_{B_1} = W^{1,2}({\Bbb R}_-)\oplus W^{1,2}_0({\Bbb R}_+),$
one obtains
   \begin{equation*}
 \ker(A_{B_1}-z)=\{\,e^{izx}\chi_-(x)h:\  h\in\cH\}, \;\; z\in{\Bbb C}_-,
 \quad \sigma_p(A_{B_1})={\Bbb C}_-
   \end{equation*}
and $\sigma(A_{B_1})= \overline{\Bbb C}_-$, where $\chi_{-}(\cdot)$ is the
indicator function of ${\Bbb R}_{-}$. Similarly, it follows from \eqref{6.43}
that
    \begin{equation}
\dom A_{B_2} = \{\,f\in W^{1,2}({\Bbb R}_-)\oplus W^{1,2}({\Bbb R}_+):\  f(+0)
= -2f(-0)\}.
    \end{equation}
Hence the functions
    \begin{equation}
f_{\lambda}(x)=
\begin{cases}
e^{i\lambda x},&x>0,\\
-2^{-1}e^{i\lambda x},&x<0,
\end{cases}
    \end{equation}
form the complete family of (generalized) eigenfunctions of the continuous
spectrum of $A_{B_2}$, and $\sigma(A_{B_2})=\sigma_c(A_{B_2})={\Bbb R}$.

Thus, the operators $A_{B_1}$ and $A_{B_2}$ are not similar. At the same time,
    \begin{equation}
\bigl(B_1-M_1(z)\bigr)^{-1} = -i/2\cdot I_{\cH} = \bigl(B_2-M_2(z)\bigr)^{-1},
\quad z\in{\Bbb C}_+.
    \end{equation}

To prove the similarity of $A_{B_2}$ to $A_0$ consider the characteristic
function $W_2(\cdot)$ of the operator $A_{B_2}$. Setting $K^*=K=I_{\cH}=J$ one
has $\im B_2=I_{\cH}=KJK^*$. Hence, using \eqref{6.42} one obtains
    \begin{eqnarray*}
& W_2(z)  = I_{\cH} + 2i K^*\bigl(B^*_2 - M_2(z)\bigr)^{-1}KJ
  =I_{\cH} + 2i\bigl(B^*_2 - M_2(z)\bigr)^{-1}\\
  & = I_{\cH} + 2i(-i I_{\cH} - 3iI_{\cH})^{-1} =1/2\cdot I_{\cH},
 \quad
z\in{\Bbb C}_+.
    \end{eqnarray*}
Since $W_2^{-1}(z) =2I_{\cH}$ is bounded in ${\Bbb C}_+$, the Nagy-Foias
theorem (\cite[Theorem 9.1.2]{NaFo67}) yields the similarity of
 $A_{B_2}$  to a self-adjoint operator.

Furthermore, it is easily seen that the operator $A_{B_2}$ is completely
non-self-adjoint. In addition,
  $$
W_2(x+i0)= s-\lim_{y \downarrow 0} W_2(x+iy) = 1/2\cdot I_{\cH},\quad x\in{\Bbb
R}.
  $$
Thus, $\|W_2(x+i0)\|<1$ for $ x\in{\Bbb R}$ and,  by \cite[Corollary
9.1.3]{NaFo67}, the  operator $A_{B_2}$ is similar to the multiplication
operator $Q:\ f(x)\to xf(x)$ in $L^2({\Bbb R},\cH)$. It remains to note that
the operator  $A_0$ is unitarily equivalent to the multiplication operator $Q$,
too.
            \end{example}
       \begin{remark}
(i)  In this example $A^{(1)}=A^{(2)}=A$ and $A^{(1)}_0=A^{(2)}_0=A_0$. It is
easily seen that the spectral measure $E_{A_0}(\cdot)$ of $A_0$ is spectrally
equivalent to the Lebesgue measure $I_{\cH}dt$. This example shows that Theorem
\ref{th5.4} is sharp and the assumptions on the spectral measures
$E_{A^{(1)}_0}$ and $E_{A^{(2)}_0}$ cannot be dropped.

(ii)  It follows from \eqref{6.42} that  $\ker(B_1-M_1(z)) = \ker(-iI_{\cH} +
iI_{\cH}) = \cH,\ z\in \dC_-,$ and $0\in\rho\bigl(B_2-M_2(z)\bigr), z\in{\Bbb
C}_{\pm}.$ Thus, by Proposition \ref{pr2.5} $ \sigma_p(A_{B_1})={\Bbb C}_-$ and
$\sigma(A_{B_2})\subset{\Bbb R}$.
      \end{remark}

\end{document}